\numberwithin{equation}{section}
\theoremstyle{plain}
\newtheorem{thm}{Theorem}[section]
\newtheorem{lem}[thm]{Lemma}
\newtheorem{cor}[thm]{Corollary}
\newtheorem{prop}[thm]{Proposition}
\theoremstyle{definition}
\newtheorem{defn}[thm]{Definition}
\newcommand{\im}{{\rm im}}
\newcommand{\sC}{{\mathcal C}}
\newcommand{\sS}{{\mathcal S}}
\newcommand{\sT}{{\mathcal T}}
\newcommand{\F}{{\mathbb F}}
\newcommand{\Q}{{\mathbb Q}}
\newcommand{\U}{{\mathbb U}}
\newcommand{\Z}{{\mathbb Z}}
\def\NDT{{\fontencoding{T5}\selectfont Nguy\~ \ecircumflex n Duy T\^an}}
\begin{document}
\title{Description of Galois unipotent extensions} 

 \author{ Masoud Ataei, J\'an Min\'a\v{c} and \NDT}
\address{Department of Mathematics, Western University, London, Ontario, Canada N6A 5B7}
\email{mataeija@uwo.ca}
\address{Department of Mathematics, Western University, London, Ontario, Canada N6A 5B7}
\email{minac@uwo.ca}
 \address{Department of Mathematics, Western University, London, Ontario, Canada N6A 5B7 and Institute of Mathematics, Vietnam Academy of Science and Technology, 18 Hoang Quoc Viet, 10307, Hanoi - Vietnam } 
\email{duytan@math.ac.vn}
\thanks{JM is partially supported  by the Natural Sciences and Engineering Research Council of Canada (NSERC) grant R0370A01. NDT is partially supported  by the National Foundation for Science and Technology Development (NAFOSTED) grant 101.04-2014.34}
 \begin{abstract}
Given an arbitrary field $F$, we describe all Galois extensions $L/F$ whose \mbox{Galois} groups are isomorphic to the group of upper triangular unipotent 4-by-4  matrices with entries in the field of two elements.
\end{abstract}
\dedicatory{Dedicated to Professor Paulo Ribenboim} 
\maketitle
\section{Introduction}

Let $G$ be a finite group, and let $F$ be an arbitrary field. A fundamental problem in Galois theory is to describe all Galois extensions $L/F$ whose Galois groups are isomorphic to group $G$. It is desirable to describe such families of extensions using invariants of $L/F$ which depend only on the base field $F$. If $G$ is abelian then this is possible by the theories of Kummer and Artin-Schreier's extension, and classical work  of A. Allbert and D. J. Saltman. Moreover this description is elegant, simple and useful. It is known that there are  some other very interesting and useful explicit constructions of Galois extensions $L/F$ with prescribed Galois group $G$. See for example, \cite{Ja}, \cite[Chapters 5-6]{JLY}, \cite[Chapters 2,5-7]{Le}, \cite{Ma}, \cite{MNg}, \cite{MZ}, \cite{Sa}. However the simplicity and generality of the descriptions of Kummer and Artin-Schreier's extension seem to be unmatched.

Recall that for each natural number $n$, $\U_n(\F_p)$ is the group of  upper triangular $n\times n$-matrices with entries in $\F_p$ and diagonal entries 1. In a recent development of  Massey products in Galois cohomology, it was recognized that Galois extensions $L/F$ with ${\rm Gal}(L/F)\simeq \U_n(\F_p)$ play a very special role in Galois theory of $p$-extensions. (See \cite{Ef}, \cite{EMa}, \cite{HW}, \cite{Dwy}, \cite{GLMS}, \cite{MT1, MT2, MT4, MT5}.) Moreover the works above reveal some surprising  depth and simplicity of analysis of these extensions. 
The main purpose of our paper  is to describe all  Galois extensions $L/F$ with ${\rm Gal}(L/F)\simeq \U_4(\F_2)$ over any given field $F$. Our main results are Theorem~\ref{thm:U4 char not 2} and Theorem~\ref{thm:U4 char 2}.
We also show that a similar description is valid for Galois extensions with Galois group isomorphic to $\U_3(\F_2)$ over an arbitrary field. (Note that $\U_3(\F_2)$ is isomorphic to the dihedral group of order 8.)

Beside of their intrinsic value, these simple descriptions of Galois extensions $L/F$ with ${\rm Gal}(L/F)\simeq \U_4(\F_2)$ are expected to play a significant role in an induction approach to the construction of Galois extensions $L/F$ with ${\rm Gal}(L/F)\simeq \U_n(\F_2)$ for $n\geq 2$, and for a possible proof of the Vanishing $n$-Massey Conjecture for absolute Galois groups of fields. (See \cite {MT1,MT5}.) Also this description should be useful for establishing the Kernel $n$-Unipotent Conjecture for absolute Galois groups of fields and $p=2$. This would be a very interesting extension of the work of \cite{MSp}, \cite{Vi}. (See also \cite{EM},\cite{MT2}.) 

Indeed a natural program for solving the Vanishing $n$-Massey Conjecture for absolute Galois groups of fields uses Theorem 2.4 in \cite{Dwy}. This theorem reduces the problem to solving a certain Galois embedding problem induced from the central extension
\[
1\to \F_p \to \U_{n+1}(\F_p) \to \bar{\U}_{n+1}(\F_p)\to 1,
\]
where $\bar{\U}_{n+1}(\F_p)$ is the quotient of $\U_{n+1}(\F_p)$ by its center. 
The most interesting and difficult task is in fact to find a construction of Galois extensions $L/F$ with ${\rm Gal}(L/F)\simeq \U_{n+1}(\F_p)$ that solve the embedding problem. Because $\U_{n+1}(\F_p)$ contains copies of $\U_n(\F_p)$, one can consider to use induction on $n$. This program was realized in \cite{MT5} in the case $n=3$. Here the knowledge of the explicit construction of Galois extensions with Galois group $\U_3(\F_p)$ was crucial. A successful implementation of this program also in the case $n=4$ may reveal the induction procedure which is valid in general. Therefore the knowledge of Galois extensions $L/F$ with ${\rm Gal}(L/F)\simeq \U_4(\F_2)$ seems to be important for the implementation of this program. 

  Further possible applications of this work can be related to an extension of the study of Redei symbols and also the
study of 2-Hilbert towers.  (See \cite{A}, \cite{McL}.) 

Next we shall briefly describe  the content of our paper. In Section 2 we provide  a description of Galois extensions with Galois group isomorphic to $\U_4(\F_2)$ over a given field of characteristic not 2. We then use this description to count the number of Galois extensions  with Galois group isomorphic to $\U_4(\F_2)$ over a field which is a finite extension of $\Q_2$. In Section 3 we provide a description of Galois dihedral extensions of order $8$ over a given field of characteristic not 2. In Section 4 we provide  a description of Galois extensions  with Galois group isomorphic to $\U_4(\F_2)$ over a given field of characteristic 2. We then use this description to count the number of Galois extensions  with Galois group isomorphic to $\U_4(\F_2)$ over a field $F$ with $F/\wp(F)$ finite, where $\wp(X)=X^2-X$ is the Artin-Schreier polynomial. (Here $F/\wp(F)$ is the quotient group of $F$ by its subgroup $\wp(F)$ of all values of $\wp$.) Finally in Section 5 we illustrate our results by an example with base field $\Q_2$. Here we provide a list of all unipotent Galois extensions $L/\Q_2$ with Galois groups isomorphic to $\U_n(\F_2)$ for $n\geq 2$. This completes the work of Naito (\cite{Na}) who listed all dihedral extensions of order 8 over $\Q_2$.
\\
\\
{\bf Acknowledgements: } We are grateful to I. Efrat,  S. Gille, M. Hopkins, E. Matzri, S. Sorkhou, A. Topaz and K. Wickelgren for interesting discussions concerning previous work on Massey products in Galois cohomology which was among the inspiration for this work although it is strictly speaking logically independent from these considerations. We are  grateful to   anonymous referees for their  careful reading of our paper and for providing us with insightful comments and valuable suggestions which we used to improve our exposition.
\\
\\
{\bf Notation:} For any field $F$ of characteristic not 2 and for any element $a\in F$, we denote $[a]_F$  the image of $a$ in $F^\times/(F^\times)^2$. For $V$ an $\F_2$-subspace of $F^\times/(F^{\times})^2$, we define $F(\sqrt{V})=F(\sqrt{v}\colon [v]_F\in V)$. For $a, b$ in  $F$, $(a, b)_F$ or simply $(a,b)$  is the corresponding  quaternion algebra. (See \cite[Chapter 3]{Lam}.) We write $(a,b)=0$ if this algebra is isomorphic to the matrix algebra of $2\times 2$-matrices over $F$. 

For any field $F$ of characteristic 2 and for any element $a\in F$, we denote $[a]_F$  the image of $a$ in $F/\wp(F)$. 

For a finite field extension $E/F$, we use ${\rm Nm}_{E/F}$ and ${\rm Tr}_{E/F}$ to denote the norm and trace maps respectively. If $E/F$ is Galois with Galois group isomorphic to a finite group $G$, we say that $E/F$ is a $G$-extension.

For $1 \leq i, j \leq n$, let $e_{ij}$ denote the $n$-by-$n$ matrix with the 1 of $\F_p$ in the position $(i, j)$ and 0 elsewhere, and let $E_{ij}=1+e_{ij}$.

We denote $D_8$ the dihedral group of order 8.
\\
\\
{\bf Convention:} For a given base field $F$, all extensions over $F$ considered in this paper are inside a chosen separable closure of $F$.
\section{Description of Galois $\U_4(\F_2)$-extensions: The case of characteristic not 2}

Let $F$ be a field of characteristic different from 2. 
\begin{defn}
A pair $([b]_F, V)$, where $b$ is in $F^\times$ and $V\subseteq F^\times/(F^\times)^2$, is {\it admissible} if $\dim_{\F_2}(V)=2$, $\dim_{\F_2}(\langle V,[b]_F\rangle)=3$ and $(b,v)=0$ for every $[v]_F\in V$.
\end{defn}

\begin{lem} Assume that $([b]_F,V)$ is admissible. Let $E=F(\sqrt{V})$.  Then there exists $\delta\in E$ such that $[{\rm Nm}_{E/F}(\delta)]_F=[b]_F$. 
\end{lem}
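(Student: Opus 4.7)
Since $\dim_{\F_2} V = 2$, I fix $a, c \in F^\times$ with $V = \langle [a]_F, [c]_F\rangle$, so that $E = F(\sqrt{a}, \sqrt{c})$ is a biquadratic extension. Write $\Gal(E/F) = \{1, \sigma_1, \sigma_2, \sigma_3\}$ where $\sigma_1$ sends $\sqrt{a}\mapsto -\sqrt{a}$ and fixes $\sqrt{c}$, $\sigma_2$ sends $\sqrt{c}\mapsto -\sqrt{c}$ and fixes $\sqrt{a}$, and $\sigma_3 = \sigma_1\sigma_2$ fixes $\sqrt{ac}$. The admissibility hypothesis gives $(b,a)=0=(b,c)$, so by the classical criterion for triviality of a quaternion algebra, $b$ is a norm from each of $F(\sqrt{a})/F$ and $F(\sqrt{c})/F$. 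Thus I can pick
$$b = x_0^2 - a y_0^2 = u_0^2 - c v_0^2, \qquad x_0,y_0,u_0,v_0 \in F.$$
Because $\langle V, [b]_F\rangle$ has dimension $3$, the element $b$ is not a square in $F$, which forces $y_0\neq 0$ and $v_0\neq 0$.

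My candidate for $\delta$ is the simple ansatz obtained by summing the two natural norm witnesses:
$$\delta \;=\; (x_0 + y_0\sqrt{a}) + (u_0 + v_0\sqrt{c}) \;\in\; E.$$
To prove the claim I would expand $N_{E/F}(\delta)=\delta\cdot\sigma_1(\delta)\cdot\sigma_2(\delta)\cdot\sigma_3(\delta)$, grouping the four factors into two pairs that each lie in an intermediate quadratic subfield (for instance pairing $\delta$ with $\sigma_3(\delta)$ and $\sigma_1(\delta)$ with $\sigma_2(\delta)$, so that both partial products lie in $F(\sqrt{ac})$). Substituting the two identities $a y_0^2 = x_0^2 - b$ and $c v_0^2 = u_0^2 - b$ into the resulting symmetric expression should make nearly everything cancel, leaving the clean identity
$$N_{E/F}(\delta) \;=\; 4\,b\,(x_0+u_0)^2.$$
Since $\Char F\neq 2$, this immediately gives $[{\rm Nm}_{E/F}(\delta)]_F = [b]_F$ in $F^\times/(F^\times)^2$, provided $x_0+u_0\neq 0$.

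It remains to rule out the degenerate possibility $x_0+u_0=0$. If $u_0=-x_0$, then comparing $u_0^2-cv_0^2=x_0^2-ay_0^2$ yields $a y_0^2 = c v_0^2$ with $y_0, v_0 \neq 0$, which forces $[a]_F = [c]_F$ in $F^\times/(F^\times)^2$, contradicting $\dim_{\F_2}V = 2$. Hence $x_0+u_0\neq 0$ is automatic, and the argument closes.

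\textbf{Main obstacle.} The only non-formal step is the algebraic identity $N_{E/F}(\delta)=4b(x_0+u_0)^2$. This is where both defining relations for $b$ have to be used in concert; once it is in hand, the conclusion is a formal consequence of the admissibility conditions.
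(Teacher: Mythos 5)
Your proof is correct, but it proceeds quite differently from the paper: the paper disposes of this lemma in one line by quoting the construction in \cite[Section 5]{MT1}, which produces a $\delta\in E$ with ${\rm Nm}_{E/F}(\delta)=bd^2$ from the two hypotheses $(a,b)=(b,c)=0$, whereas you give a self-contained, explicit witness. I checked your key identity: writing $\alpha=x_0+y_0\sqrt{a}$, $\beta=u_0+v_0\sqrt{c}$ and $\delta=\alpha+\beta$, the pairing you suggest gives
\[
{\rm Nm}_{E/F}(\delta)=\bigl(2b+2x_0u_0-2y_0v_0\sqrt{ac}\bigr)\bigl(2b+2x_0u_0+2y_0v_0\sqrt{ac}\bigr)
=4\bigl[(b+x_0u_0)^2-ac\,y_0^2v_0^2\bigr],
\]
and substituting $ay_0^2=x_0^2-b$, $cv_0^2=u_0^2-b$ indeed collapses this to $4b(x_0+u_0)^2$, so the formula you anticipated does hold. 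Your treatment of the side conditions is also sound: $[b]_F\notin V$ (in particular $[b]_F\neq 1$) forces $y_0,v_0\neq 0$, and $x_0+u_0=0$ would give $ay_0^2=cv_0^2$, hence $[a]_F=[c]_F$, contradicting $\dim_{\F_2}V=2$; note this degeneracy check is genuinely necessary, since otherwise the displayed norm would vanish. What your approach buys is an elementary, characteristic-free-of-$2$ computation with a completely explicit $\delta$ (useful, for instance, for the kind of concrete lists over $\Q_2$ in Section 5); what the paper's citation buys is brevity and consistency with \cite{MT1}, where the chosen $\delta$ is tied to the later analysis of the conjugates $A={\rm Nm}_{E/F(\sqrt{a})}(\delta)$ and $C={\rm Nm}_{E/F(\sqrt{c})}(\delta)$, but for the statement of this lemma alone your direct argument is a complete substitute.
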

\begin{proof} We have $V=\langle [a]_F, [c]_F\rangle$ for some $a,c\in F^\times$. Then $(a,b)=(b,c)=0$. By \cite[Section 5]{MT1},  there exists $\delta\in E$ such that
${\rm Nm}_{E/F}(\delta)=bd^2$ for some $d\in F^\times$.
\end{proof}
\begin{defn} 
Assume that $([b]_F,V)$ is admissible. Let $E=F(\sqrt{V})$. Then a triple $([b]_F, V, W)$, where $W$ is a free  $\F_2[{\rm Gal}(E/F)]$-submodule of $E^\times/(E^\times)^2$, is {\it admissible} if $W$ is generated by an element $[\delta]_E$ with $[{\rm Nm}_{E/F}(\delta)]_F =[b]_F$.
\end{defn}

\begin{lem}
\label{lem:modular group ring}
 Let $K$ be a field of characteristic  $p>0$. Let $G$ be a finite $p$-group. Then every non-zero left ideal in the group ring $K[G]$ contains the element $\sum_{\sigma\in G} \sigma$.
\end{lem}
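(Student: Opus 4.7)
The plan is to exploit the well-known local structure of the modular group algebra $K[G]$ when $G$ is a finite $p$-group and $\Char K = p$. Concretely, let $I$ denote the augmentation ideal, generated as a $K$-vector space by the elements $\sigma - 1$ for $\sigma \in G$. A standard computation (induction on $|G|$, or using the fact that $I$ is the kernel of the unique simple module $K$) shows that $I$ is nilpotent and is the unique maximal (left, right and two-sided) ideal of $K[G]$, so $K[G]$ is a local Artinian ring and $I$ is its Jacobson radical.

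The first substantive step is to identify the left socle
\[
\mathrm{Soc}(K[G]) \;=\; \{x \in K[G] : Ix = 0\} \;=\; \{x \in K[G] : \sigma x = x \text{ for all } \sigma \in G\}.
\]
Writing $x = \sum_{\tau \in G} a_\tau \tau$, the condition $\sigma x = x$ for every $\sigma \in G$ forces $a_{\sigma^{-1}\tau} = a_\tau$ for all $\sigma, \tau$, i.e.\ all coefficients $a_\tau$ coincide. Hence $\mathrm{Soc}(K[G]) = K \cdot N$, where $N := \sum_{\sigma \in G} \sigma$.

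Now let $J$ be a non-zero left ideal of $K[G]$ and pick a non-zero $x \in J$. Since $I$ is nilpotent, there is a largest integer $m \geq 0$ for which $I^m x \neq 0$ (taking $I^0 = K[G]$ ensures $m$ exists). Choose $y \in I^m x \subseteq J$ with $y \neq 0$; then $Iy \subseteq I^{m+1}x = 0$, so $y$ lies in the socle. By the previous step, $y = cN$ for some $c \in K^\times$, whence $N = c^{-1} y \in J$.

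The only step that requires any care is the identification of the socle and the fact that $K[G]$ is local with nilpotent radical $I$; both are classical features of the modular group algebra and rest only on the finiteness of the $p$-group $G$ and the equality $\Char K = p$. I expect no real obstacle beyond citing or briefly recalling these facts.
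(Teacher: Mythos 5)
Your argument is correct, but it reaches the conclusion by a different mechanism than the paper. The paper takes a minimal non-zero left ideal $J'$ inside the given ideal, observes that $J'$ is a simple $K[G]$-module, invokes the classical fact that the only simple module over $K[G]$ (with $G$ a finite $p$-group, $\Char K=p$) is the trivial module $K$, and concludes that a generator of $J'$ is $G$-fixed, hence a scalar multiple of $N=\sum_{\sigma\in G}\sigma$. You instead invoke the (equally classical, essentially equivalent) fact that the augmentation ideal $I$ is nilpotent, and then manufacture a $G$-fixed element of the given ideal directly: take the largest $m$ with $I^m x\neq 0$ and land in the left socle, which you correctly compute to be $K\cdot N$ by the coefficient argument $a_{\sigma^{-1}\tau}=a_\tau$. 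The common core of both proofs is exactly that coefficient computation (a $G$-fixed element of $K[G]$ is a multiple of $N$); the difference lies in how the fixed element is produced — existence of a minimal submodule plus the uniqueness of the simple module in the paper, versus nilpotence of the radical in your version. Your route avoids citing the classification of simple $K[G]$-modules explicitly and does not need to pass through a minimal ideal (so it does not even use finite-dimensionality beyond the nilpotence of $I$), at the cost of having to know or prove that $I$ is nilpotent; the paper's route is marginally shorter once the unique-simple-module fact is granted. Either set of prerequisites is standard, so both proofs are complete at the same level of rigor.
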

\begin{proof} Let $I$ be any non-zero left ideal in $K[G]$. Then $I$ contains a minimal non-zero left ideal $J$. As a $K[G]$-module, $J$ is simple. We know that over $K[G]$ there is up to isomorphism only one simple module, which is $K$ with trivial action. Let $n$ be any element in $J$ which generates $J$ as a $K[G]$-module. Then $n$ is fixed under all elements of $G$. Hence $n= a\sum_{\sigma\in G}\sigma$, for some $a\in K^\times$. This implies that $\sum_{\sigma\in G}\sigma$ is in $J$. 
\end{proof}
\begin{lem} 
\label{lem:generator}
Let $([b]_F, V, W)$ be an admissible triple. Assume that $V=\langle [a]_F,[c]_F\rangle$. Let $E=F(\sqrt{V})$. Assume that $W$ is generated by $[\delta]_E$ as a free $\F_2[{\rm Gal}(E/F)]$-module with $[{\rm Nm}_{E/F}(\delta)]_F =[b]_F$. Let $A= {\rm Nm}_{E/F(\sqrt{a})}(\delta)$ and $C={\rm Nm}_{E/F(\sqrt{c})}(\delta)$. Then every generator of $W$ as a free $\F_2[{\rm Gal}(E/F)]$-module is of the form 
\[[\delta^\prime]_E= [\delta A^{\epsilon_A}C^{\epsilon_C} b^{\epsilon_b}]_E,\]
where $\epsilon_A,\epsilon_C,\epsilon_b\in \{0,1\}$.

Furthermore for any generator $[\delta^\prime]_E$ of $W$ as a free $\F_2[{\rm Gal}(E/F)]$-module, we have $[{\rm Nm}_{E/F}(\delta^\prime)]_F=[b]_F$. In particular, this implies that the pair $(V,W)$ uniquely determines  $[b]_F$. 
\end{lem}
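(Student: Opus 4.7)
Write $G := \Gal(E/F) = \{1,\sigma,\tau,\sigma\tau\}$, where $\sigma$ fixes $F(\sqrt{c})$ and $\tau$ fixes $F(\sqrt{a})$. Passing to $E^\times/(E^\times)^2$ (written additively), the $\F_2[G]$-action on $[\delta]_E$ gives
\[
(1+\tau)[\delta]_E = [\delta\tau(\delta)]_E = [A]_E, \qquad (1+\sigma)[\delta]_E = [\delta\sigma(\delta)]_E = [C]_E,
\]
and the norm element $N_G := \sum_{g \in G} g$ satisfies $N_G\cdot [\delta]_E = [{\rm Nm}_{E/F}(\delta)]_E = [bd^2]_E = [b]_E$ for some $d \in F^\times$ (by the hypothesis that $[{\rm Nm}_{E/F}(\delta)]_F = [b]_F$).

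Since $W$ is free of rank one over $\F_2[G]$, its generators are precisely $u\cdot[\delta]_E$ as $u$ ranges over the units of $\F_2[G]$. The ring $\F_2[G]$ is local with maximal ideal equal to the augmentation ideal $I$: indeed any maximal left ideal must annihilate the unique simple $\F_2[G]$-module, which, by the argument in Lemma~\ref{lem:modular group ring}, is $\F_2$ with trivial action; hence $\F_2[G]^\times = 1 + I$. Therefore every generator of $W$ has the form $[\delta]_E + i\cdot[\delta]_E$ for some $i \in I$. Taking $\{1+\sigma,\,1+\tau,\,1+\sigma\tau\}$ as an $\F_2$-basis of $I$, the first two basis vectors act as $[C]_E$ and $[A]_E$ respectively. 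For the third, solving $N_G\cdot[\delta]_E = [b]_E$ for $[\sigma\tau(\delta)]_E$ yields
\[
(1+\sigma\tau)[\delta]_E = [\delta]_E + [\sigma\tau(\delta)]_E = [A]_E + [C]_E + [b]_E.
\]
Thus $I\cdot[\delta]_E$ equals the $\F_2$-span of $\{[A]_E, [C]_E, [b]_E\}$, and freeness of $W$ forces these three classes to be $\F_2$-linearly independent in $E^\times/(E^\times)^2$. Consequently every generator of $W$ has the form $[\delta A^{\epsilon_A} C^{\epsilon_C} b^{\epsilon_b}]_E$ with $\epsilon_A, \epsilon_C, \epsilon_b \in \{0,1\}$, as claimed.

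For the second assertion, since $A \in F(\sqrt{a})^\times$, transitivity of the norm gives
\[
{\rm Nm}_{E/F}(A) = {\rm Nm}_{F(\sqrt{a})/F}\bigl({\rm Nm}_{E/F(\sqrt{a})}(A)\bigr) = {\rm Nm}_{F(\sqrt{a})/F}(A^2) = \bigl({\rm Nm}_{F(\sqrt{a})/F}(A)\bigr)^2 \in (F^\times)^2,
\]
and symmetrically ${\rm Nm}_{E/F}(C) \in (F^\times)^2$, while ${\rm Nm}_{E/F}(b) = b^4 \in (F^\times)^2$. Therefore $[{\rm Nm}_{E/F}(\delta')]_F = [{\rm Nm}_{E/F}(\delta)]_F = [b]_F$ for every generator $[\delta']_E$ of $W$; the ``in particular'' clause is then immediate, since two admissible triples $([b]_F, V, W)$ and $([b']_F, V, W)$ must both agree with $[{\rm Nm}_{E/F}(\delta')]_F$ for a common generator $[\delta']_E$.

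The step I expect to be the main obstacle is pinning down $(1+\sigma\tau)[\delta]_E$ cleanly: it requires translating the defining identity ${\rm Nm}_{E/F}(\delta) = bd^2$ (which lives in $F^\times$) into a relation in $E^\times/(E^\times)^2$ (where $[d^2]_E = 0$), and then using freeness of $W$ to upgrade the spanning set $\{[A]_E, [C]_E, [b]_E\}$ of $I\cdot[\delta]_E$ into an $\F_2$-basis. Once these are in hand, the rest reduces to $\F_2$-linear algebra and the elementary transitivity of norms in the biquadratic tower.
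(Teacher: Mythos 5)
Your proof is correct and takes essentially the same route as the paper: both rest on the structure of $\F_2[G]$ for the Klein four-group $G$ (the unique simple module, equivalently that $\F_2[G]$ is local with maximal ideal the augmentation ideal $I$), on identifying $I\cdot[\delta]_E$ with the $\F_2$-span of $[A]_E,[C]_E,[b]_E$ inside $W$, and on the norm-transitivity computation for the ``Furthermore'' clause. Your packaging via generators $=$ unit multiples $=(1+I)[\delta]_E$ simply handles both inclusions at once, where the paper rules out $\epsilon_\delta=0$ using the norm element and proves the converse via its lemma that every nonzero ideal of $\F_2[G]$ contains $\sum_{\sigma\in G}\sigma$; the underlying content is identical, and your explicit norm calculation fills in a step the paper leaves implicit.
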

\begin{proof} Let $G={\rm Gal}(E/F)$. 
As an $\F_2$-vector space, $W$ is generated  by $[\delta]_E, [A]_E, [C]_E, [b]_E$.
Let $[\delta^\prime]_E$ be an arbitrary generator of the free $\F_2[G]$-module. Then 
\[[\delta^\prime]_E= [\delta^{\epsilon_\delta} A^{\epsilon_A}C^{\epsilon_C} b^{\epsilon_b}]_E,\]
for some $\epsilon_{\delta}, \epsilon_A,\epsilon_C,\epsilon_b\in \{0,1\}$.
Suppose that $\epsilon_{\delta}=0$, then we see that $(\sum_{\sigma\in G}\sigma)([\delta^\prime]_E)$ is trivial in $E^\times/((E^\times)^2)$, a contradiction. Hence $\epsilon_{\delta}=1$. Furthermore, we have
\[
[{\rm Nm}_{E/F}(\delta^\prime)]_F= [b]_F.
\]
This implies that $[b]_F$ is uniquely determined by $V$ and $W$.

Conversely, assume that  $[\delta^\prime]_E=[\delta A^{\epsilon_A}C^{\epsilon_C} b^{\epsilon_b}]_E,$ for some $\epsilon_A,\epsilon_C,\epsilon_b\in \{0,1\}$. Let $W^\prime$ be the $\F_2[G]$-module generated by $[\delta^\prime]_E$. Then we have $W^\prime\subseteq W$. It is then enough to show that $W^\prime$ is a free $\F_2[G]$-module. Suppose that $W^\prime$ would not be free. Then there would exist a non-zero ideal $I\subseteq \F_2[G]$ such that $I$ would annihilate $\delta^\prime$. By Lemma~\ref{lem:modular group ring} any non-zero ideal of $\F_2[G]$ contains the element $\sum_{\sigma\in G}\sigma=:N$. Therefore $N$ would annihilate $[\delta^\prime]_E$. This contradicts  the fact that
\[
N([\delta^\prime]_E)]=[{\rm Nm}_{E/F}(\delta^\prime)]_E= [b]_E \not=1\in E^\times/(E^\times)^2.
\qedhere
\] 
\end{proof}

\begin{prop}
\label{prop:admissible triple}
 Let  $([b]_F, V, W)$ be an admissible triple. Let $E=F(\sqrt{V})$. Let $L=E(\sqrt{W})$. Then  $L/F$ is a Galois $\U_4(\F_2)$-extension.
\end{prop}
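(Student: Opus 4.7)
Plan: write $G = \Gal(E/F) \cong (\Z/2)^2$ and, once $L/F$ is shown to be Galois, $N = \Gal(L/E)$. I would split the argument into three tasks: verifying that $L/F$ is Galois of the expected degree, identifying the extension $1 \to N \to \Gal(L/F) \to G \to 1$ as a split semidirect product with an explicit action, and matching that semidirect product with $\U_4(\F_2)$.

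First, since $W$ is an $\F_2[G]$-submodule of $E^\times/(E^\times)^2$, it is $G$-stable; so $L = E(\sqrt{W})$ is normal over $F$, hence Galois because $\Char F \neq 2$. The degree is $[L:F] = [E:F]\cdot [L:E] = 2^{\dim_{\F_2} V} \cdot 2^{\dim_{\F_2} W} = 4\cdot 16 = 64$: the factor $16$ uses that $W$ is free of rank one over $\F_2[G]$, so $\dim_{\F_2} W = |G| = 4$. Since $64 = |\U_4(\F_2)|$, this is the right order to be aiming for.

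Next, Kummer theory gives an $\F_2[G]$-module isomorphism $N \cong \Hom_{\F_2}(W, \F_2)$ via the pairing $([w], \sigma) \mapsto \sigma(\sqrt{w})/\sqrt{w}$, where $G$ acts on $N$ by conjugation through any lift to $\Gal(L/F)$. Since $W$ is free of rank one over $\F_2[G]$ and $\F_2[G]$ is a Frobenius algebra (self-dual as a left module over itself), we obtain $N \cong \F_2[G]$ as $\F_2[G]$-modules. The extension $1 \to \F_2[G] \to \Gal(L/F) \to G \to 1$ is then classified by $H^2(G, \F_2[G])$, which vanishes by Shapiro's lemma (since $\F_2[G]$ is the induction to $G$ of the trivial $\F_2$-module from the trivial subgroup). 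So $\Gal(L/F) \cong \F_2[G] \rtimes G$, with $G$ acting on $\F_2[G]$ by the left regular representation.

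Finally, I check that $\F_2[(\Z/2)^2] \rtimes (\Z/2)^2 \cong \U_4(\F_2)$. Inside $\U_4(\F_2)$, set $N' = \langle E_{13}, E_{14}, E_{23}, E_{24}\rangle$, i.e.\ the matrices $1+M$ with $M$ supported in rows $1, 2$ and columns $3, 4$. A short computation based on $e_{ij} e_{kl} = \delta_{jk} e_{il}$ shows that $N'$ is abelian of order $16$, normal in $\U_4(\F_2)$, and that $\langle E_{12}, E_{34}\rangle \cong (\Z/2)^2$ is a complement; the same computation shows that the conjugation orbit of $E_{23}$ under $\langle E_{12}, E_{34}\rangle$ is an $\F_2$-basis of $N'$, so $N'$ is a free rank-one $\F_2[\langle E_{12}, E_{34}\rangle]$-module, matching the description of $\Gal(L/F)$ obtained above. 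The main obstacle is the middle step, where one must identify $N$ as a \emph{free} $\F_2[G]$-module (using both the freeness hypothesis on $W$ and the Frobenius self-duality of $\F_2[G]$) and invoke the vanishing of $H^2(G, \F_2[G])$; the final matching of semidirect products is then a routine check inside $\U_4(\F_2)$.
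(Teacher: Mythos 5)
Your proof is correct, but it takes a genuinely different route from the paper's. The paper's proof fixes a generator $\delta$ of $W$, sets $A={\rm Nm}_{E/F(\sqrt{a})}(\delta)$ and $C={\rm Nm}_{E/F(\sqrt{c})}(\delta)$, verifies the explicit relations $\sigma_c(\delta)=\delta A\delta^{-2}$, $\sigma_a(\delta)=\delta C\delta^{-2}$, $\sigma_a(A)=A\,bd^2/A^2$, $\sigma_c(C)=C\,bd^2/C^2$, and then cites \cite[Section 3]{MT3}, which supplies both the fact that $L/F$ is Galois with group $\U_4(\F_2)$ and an explicit isomorphism $\rho$ sending suitable lifts of $\sigma_a,\sigma_b,\sigma_c$ to $E_{12},E_{23},E_{34}$. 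You instead argue structurally: Kummer duality identifies ${\rm Gal}(L/E)$ with the $\F_2$-dual of $W$ as a $G$-module, self-duality of the Frobenius algebra $\F_2[G]$ converts the freeness hypothesis on $W$ into ${\rm Gal}(L/E)\simeq\F_2[G]$ with the regular conjugation action, Shapiro's lemma gives $H^2(G,\F_2[G])=0$ so the extension of $G$ by ${\rm Gal}(L/E)$ splits, and a direct matrix computation exhibits $\U_4(\F_2)$ as $\F_2[(\Z/2)^2]\rtimes(\Z/2)^2$ with normal subgroup $\langle E_{13},E_{14},E_{23},E_{24}\rangle$, complement $\langle E_{12},E_{34}\rangle$, and the conjugation orbit of $E_{23}$ as a basis; I checked that computation and the orbit is indeed an $\F_2$-basis, so the matching is valid. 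What your route buys is self-containedness (no appeal to the Massey-product machinery of \cite{MT3}) and a clarification of which hypotheses are actually used here: only $\dim_{\F_2}V=2$ and the freeness of $W$ enter, the conditions on $[b]_F$ being what guarantees existence of $\delta$ and what is needed elsewhere. What the paper's route buys is the explicit, normalized isomorphism $\rho$ (with prescribed images of the lifted $\sigma_a,\sigma_b,\sigma_c$), which is then used in the proof of Theorem~\ref{thm:U4 char not 2} to check that the two constructions $\mu$ and $\eta$ are mutually inverse; from your abstract isomorphism one would need a small additional argument (essentially your explicit free basis of $\langle E_{13},E_{14},E_{23},E_{24}\rangle$) to recover that normalization.
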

\begin{proof}
Suppose that $V=\langle [a]_F,[c]_F\rangle$ and that $W$ is generated by $\delta$ with ${\rm Nm}_{E/F}(\delta)=bd^2$. Let $A= {\rm Nm}_{E/F(\sqrt{a})}(\delta)$ and $C={\rm Nm}_{E/F(\sqrt{c})}(\delta)$. 
We first note that $F(\sqrt{a},\sqrt{b},\sqrt{c})/F$ is an abelian 2-elementary extension whose Galois group is generated by $\sigma_a,\sigma_b,\sigma_c$, where
\[
\begin{aligned}
\sigma_a(\sqrt{a})&=- \sqrt{a}, \sigma_a(\sqrt{b})=\sqrt{b}, \sigma_a(\sqrt{c})=\sqrt{c};\\
\sigma_b(\sqrt{a})&=\sqrt{a}, \sigma_b(\sqrt{b})= -\sqrt{b}, \sigma_b(\sqrt{c})= \sqrt{c};\\
\sigma_c(\sqrt{a})&=\sqrt{a}, \sigma_c(\sqrt{b})=\sqrt{b}, \sigma_c(\sqrt{c})=- \sqrt{c}.
\end{aligned}
\]

Clearly we have
\[
\begin{aligned}
\sigma_c(\delta)&=\delta A \delta^{-2},\\
\sigma_a(\delta)&=\delta C \delta^{-2}, \\
\sigma_a(A) &= A \frac{bd^2}{A^2},\\
\sigma_c(C) &= C\frac{bd^2}{C^2},\\
\end{aligned}
\]
and 
\[
\frac{C}{A}= \frac{\sigma_a(\delta)}{\delta} \frac{\delta}{\sigma_c(\delta)}.
\]
Then \cite[Section 3]{MT3} implies that $L/F$ is a Galois $\U_4(\F_2)$-extension.  Moreover an explicit isomorphism $\rho\colon {\rm Gal}(L/F)\to \U_4(\F_2)$ is given by
\[
\tilde{\sigma}_a \mapsto E_{12}, \; \;
\tilde{\sigma}_b\mapsto  E_{23}, \;\;
\tilde{\sigma}_c\mapsto E_{34},
\]
for suitable extensions  $\tilde{\sigma}_a,\tilde{\sigma}_b,\tilde{\sigma}_c\in {\rm Gal}(L/F)$ of $\sigma_a,\sigma_b,\sigma_c$.
\end{proof}
\begin{prop}
\label{prop:Galois extension}
There is a natural way to associate an admissible triple $([b]_F,V,W)$ to any given Galois $\U_4(\F_2)$-extension $L/F$.
\end{prop}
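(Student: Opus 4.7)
The plan is to reverse-engineer the construction of Proposition~\ref{prop:admissible triple}. Fix an isomorphism $\rho\colon\Gal(L/F)\xrightarrow{\sim}\U_4(\F_2)$ and use it to identify $\Gal(L/F)$ with $\U_4(\F_2)$. Since $\Phi(\U_4(\F_2))=[\U_4(\F_2),\U_4(\F_2)]=\langle E_{13},E_{24},E_{14}\rangle$, the abelianisation $\U_4(\F_2)^{\mathrm{ab}}\simeq(\F_2)^3$ is freely generated by the images of $E_{12},E_{23},E_{34}$. By Kummer theory the fixed subfield of $[\U_4,\U_4]$ inside $L$ is $M=F(\sqrt a,\sqrt b,\sqrt c)$ with $[a]_F,[b]_F,[c]_F$ linearly independent in $F^\times/(F^\times)^2$, labelled so that $E_{12}$ flips $\sqrt a$ and fixes $\sqrt b,\sqrt c$, and similarly $E_{23}\leftrightarrow b$, $E_{34}\leftrightarrow c$. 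Set $V:=\langle[a]_F,[c]_F\rangle$, $E:=F(\sqrt V)$ and $G:=\Gal(E/F)\simeq(\F_2)^2$; write $\sigma_a,\sigma_c$ for the nontrivial generators of $G$ induced by $E_{12},E_{34}$.

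Under $\rho$ the subgroup $\Gal(L/E)$ is the preimage of $\langle E_{23}\rangle\subset\U_4(\F_2)^{\mathrm{ab}}$, namely $N:=\langle E_{23},E_{13},E_{24},E_{14}\rangle$, and a direct matrix check shows $N$ is elementary abelian of order $16$. Hence $L=E(\sqrt W)$ for a unique $4$-dimensional $G$-stable $\F_2$-subspace $W\subseteq E^\times/(E^\times)^2$. I would then verify that $N$ is a free $\F_2[G]$-module of rank $1$ generated by $E_{23}$: the conjugations
\[
E_{12}E_{23}E_{12}^{-1}=E_{13}E_{23},\quad E_{34}E_{23}E_{34}^{-1}=E_{23}E_{24},\quad E_{12}E_{24}E_{12}^{-1}=E_{14}E_{24}
\]
translate (on $N$, written additively) into $(\sigma_a-1)E_{23}=E_{13}$, $(\sigma_c-1)E_{23}=E_{24}$ and $(\sigma_a-1)(\sigma_c-1)E_{23}=E_{14}$, exhibiting an $\F_2[G]$-basis. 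Because the Kummer pairing $W\times N\to\mu_2$ is $G$-equivariant, $W$ is correspondingly a free $\F_2[G]$-module of rank $1$: $W=\F_2[G]\cdot[\delta]_E$ for some $\delta\in E^\times$.

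It remains to verify that $([b]_F,V,W)$ is admissible. The computation in the proof of Lemma~\ref{lem:generator} (which uses only the freeness of $W$) shows that $[{\rm Nm}_{E/F}(\delta)]_F$ is independent of the chosen generator; call this class $[b']_F$. Because $b'={\rm Nm}_{F(\sqrt a)/F}({\rm Nm}_{E/F(\sqrt a)}(\delta))$ is a norm from $F(\sqrt a)$ and, by symmetry, from $F(\sqrt c)$, one has $(a,b')=0=(b',c)$, and bilinearity of the Hilbert symbol gives $(b',v)=0$ for every $[v]_F\in V$. Moreover $[b']_E=(\sum_{g\in G}g)\cdot[\delta]_E$ is the unique nontrivial $G$-invariant class in the free module $W$, so $[b']_E\neq 0$ in $E^\times/(E^\times)^2$, which forces $[b']_F\notin V$ and $\dim_{\F_2}\langle V,[b']_F\rangle=3$; hence $([b']_F,V,W)$ is admissible. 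Finally, using $E_{12}^2=E_{34}^2=I$ in $\U_4(\F_2)$, the explicit representative
\[
X:=\sqrt\delta\cdot E_{12}(\sqrt\delta)\cdot E_{34}(\sqrt\delta)\cdot E_{12}E_{34}(\sqrt\delta)
\]
of $\pm\sqrt{b'}$ is fixed by both $E_{12}$ and $E_{34}$ (rearrange the four factors and apply $E_{12}^2=E_{34}^2=I$), so $\sqrt{b'}\in M^{\langle\sigma_a,\sigma_c\rangle}=F(\sqrt b)$; since $[b']_F\neq 0$ this forces $[b']_F=[b]_F$.

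The step I expect to be most delicate is the middle paragraph: identifying $\Gal(L/E)$ with $N$ and checking by matrix calculation that $N$ is a free $\F_2[G]$-module of rank $1$ generated by $E_{23}$. This is the structural core of the correspondence; once it is in place, the verification of admissibility and the identification $[b']_F=[b]_F$ reduce to formal manipulations with norms, quaternion symbols, and Kummer characters.
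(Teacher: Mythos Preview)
Your construction of the triple and verification that it is admissible are correct, and in places cleaner than the paper's. In particular, showing directly by matrix computation that $N=\langle E_{23},E_{13},E_{24},E_{14}\rangle$ is a free rank-one $\F_2[G]$-module under conjugation, and then dualising via the $G$-equivariant Kummer pairing to conclude $W$ is free, is a nice structural alternative to the paper's more hands-on choice of $\delta$ dual to $[[\sigma_1,\sigma_2],\sigma_3]$. Likewise, your fixed-field argument for $[b']_F=[b]_F$ (showing $X\in M$ and then $X\in M^{\langle\sigma_a,\sigma_c\rangle}=F(\sqrt b)$) is a pleasant replacement for the paper's case-by-case exclusion of ${\rm Nm}_{E/F}(\delta)\in ba(F^\times)^2\cup bc(F^\times)^2\cup bac(F^\times)^2$ via the contradiction $\sigma_1^2(\sqrt A)=-\sqrt A$.

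However, there is a genuine gap: you never verify that the triple is independent of the chosen isomorphism $\rho$. This is precisely what the word ``natural'' in the statement demands, and it is what makes the map $\eta$ of Theorem~\ref{thm:U4 char not 2} well-defined. Your $[a]_F,[b]_F,[c]_F$ are the Kummer duals of $E_{12},E_{23},E_{34}$ \emph{under $\rho$}; a different $\rho$ could a priori permute or mix these classes and change $V=\langle[a]_F,[c]_F\rangle$, hence $E$, hence $W$. The paper spends its central Claim on exactly this point: it shows that for any other isomorphism $\rho'$, the element $\sigma_2':=\rho'^{-1}(E_{23})$ satisfies $\sigma_2'\equiv\sigma_2\bmod\Phi$, by checking that $\sigma_2$ is characterised (modulo $\Phi$) as the unique nontrivial coset commuting with all of $\Phi$. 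This forces $H'=H$ and hence $E$, $V$, $W$, and finally $[b]_F$, to be independent of $\rho$. Equivalently, in your language, you need the group-theoretic fact that $N=\langle E_{23}\rangle\cdot\Phi$ is a \emph{characteristic} subgroup of $\U_4(\F_2)$ (it is the centraliser of $\Phi$); without this, the construction is not yet ``natural''. Ironically, the step you flagged as most delicate is routine, and the step you omitted is the one that actually requires care.
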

\begin{proof}
Assume that $L/F$ is a Galois $\U_4(\F_2)$-extension. Let $\rho\colon {\rm Gal}(L/F)\to \U_4(\F_2)$ be any isomorphism. Set $\sigma_1=\rho^{-1}(E_{12})$, $\sigma_2=\rho^{-1}(E_{23})$, and $\sigma_3=\rho^{-1}(E_{34})$. Then the commutator subgroup $\Phi=[{\rm Gal}(L/F),{\rm Gal}(L/F)]$ is the internal direct sum 
\[
\Phi=\langle [\sigma_1,\sigma_2]\rangle \oplus \langle [\sigma_2,\sigma_3]\rangle \oplus \langle [[\sigma_1,\sigma_2],\sigma_3]\rangle \simeq (\Z/2\Z)^3.
\]

 Let $M$ be the fixed field of $\Phi$. Then $M/F$ is an  abelian 2-elementary extension of $F$, and ${\rm Gal}(M/F)$ is the internal direct sum 
\[
{\rm Gal}(M/F)=\langle \sigma_1|_M \rangle\oplus \langle \sigma_2|_M\rangle \oplus \langle \sigma_3|_M\rangle \simeq (\Z/2\Z)^3.
\]
Let $[a]_F,[b]_F,[c]_F$ be elements in $F^\times/(F^\times)^2$ which is dual to $\sigma_1|_M, \sigma_2|_M, \sigma_3|_M$ respectively via  Kummer theory. Explicitly we require that
\[
\begin{aligned}
\sigma_1(\sqrt{a})=-\sqrt{a},\sigma_1(\sqrt{b})=\sqrt{b}, \sigma_1(\sqrt{c})=\sqrt{c};\\
\sigma_2(\sqrt{a})=\sqrt{a},\sigma_2(\sqrt{b})=-\sqrt{b}, \sigma_2(\sqrt{c})=\sqrt{c};\\
\sigma_3(\sqrt{a})=\sqrt{a},\sigma_3(\sqrt{b})=\sqrt{b}, \sigma_3(\sqrt{c})=-\sqrt{c}. 
\end{aligned}
\]
Let $E=F(\sqrt{a},\sqrt{c})$. Then $E$ is fixed under $\sigma_2$, $[\sigma_1,\sigma_2]$, $[\sigma_2,\sigma_3]$ and $[[\sigma_1,\sigma_2],\sigma_3]$. 
 Hence $E$ is fixed under a subgroup $H$ of ${\rm Gal(L/F)}$ which is generated by $\sigma_2$, $[\sigma_1,\sigma_2]$, $[\sigma_2,\sigma_3]$ and $[[\sigma_1,\sigma_2],\sigma_3]$. We have $[L^H:F]=|{\rm Gal}(L/F)|/|H|=4$,  and $[E:F]=4$. Therefore $E=L^H$.
\\
\\ 
{\bf Claim:} $E$ does not depend on the choice of $\rho$.  

\noindent
{\it Proof of Claim:} Suppose that $\rho^\prime\colon {\rm Gal}(L/F)\to \U_4(\F_2)$ is another isomorphism. We define $\sigma^\prime_1={\rho^\prime}^{-1}(E_{12})$, $\sigma^\prime_2={\rho^\prime}^{-1}(E_{23})$, and $\sigma^\prime_3={\rho^\prime}^{-1}(E_{34})$. Let $H^\prime$ be the group generated by $\sigma_2^\prime$, $[\sigma_1^\prime,\sigma_2^\prime]$, $[\sigma_2^\prime,\sigma_3^\prime]$ and $[[\sigma_1^\prime,\sigma_2^\prime],\sigma_3^\prime]$. We need to show that $H=H^\prime$. We first note that $\sigma_2$ and $\sigma_2^\prime$ commute with every element in $\Phi$.

Clearly $\sigma_2^\prime|_M$ is in ${\rm Gal}(M/F)=\langle \sigma_1|_M \rangle\oplus \langle \sigma_2|_M\rangle \oplus \langle \sigma_3|_M\rangle$. 

Hence modulo the subgroup $\Phi$, $\sigma_2^\prime$ is equal to one of the following elements $\sigma_1,\sigma_2,\sigma_3$, $\sigma_1\sigma_2,\sigma_1\sigma_3,\sigma_2\sigma_3$, $\sigma_1\sigma_2\sigma_3$.

If $\sigma_2^\prime=\sigma_1,$ or $\sigma_1\sigma_2$, or $\sigma_1\sigma_3$, or $\sigma_1\sigma_2\sigma_3$ modulo $\Phi$, then 
\[
[[\sigma_2,\sigma_3],\sigma_2^\prime]=[[\sigma_2,\sigma_3],\sigma_1],
\] 
which is impossible since $[[\sigma_2,\sigma_3],\sigma_1]$ is nontrivial but $[[\sigma_2,\sigma_3],\sigma^\prime_2]$ is trivial.

If $\sigma_2^\prime=\sigma_3,$ or $\sigma_2\sigma_3$ modulo $\Phi$, then 
\[
[[\sigma_1,\sigma_2],\sigma_2^\prime]=[[\sigma_1,\sigma_2],\sigma_3],
\] 
which is impossible since $[[\sigma_1,\sigma_2],\sigma_3]$ is nontrivial but $[[\sigma_1,\sigma_2],\sigma^\prime_2]$ is trivial.

From the above discussion we see that $\sigma^\prime_2\equiv \sigma_2\bmod \Phi$. This implies that   $H^\prime=H$. Thus $E$ does not depend on the choice of $\rho$.
\\
\\  
We have an exact sequence 
\[
1 \to {\rm Gal}(L/E)  \to {\rm Gal}(L/F) \to {\rm Gal}(E/F)=G \to 1.
\]
Then ${\rm Gal}(L/E)$ is an $\F_2[G]$-module where the action is by conjugation. We also have the $G$-equivariant Kummer pairing  (\cite[Section 1]{Wa2})
\[
\frac{E\cap (L^\times)^2}{(E^\times)^2}\times {\rm Gal}(L/E) \to \F_2.
\]
As an $\F_2$-vector space, ${\rm Gal}(L/E)$ has a basis consisting of $\sigma_2$, $[\sigma_1,\sigma_2]$, $[\sigma_2,\sigma_3]$ and $[[\sigma_1,\sigma_2],\sigma_3]$. 
Let $[\delta]_E$ be an element dual to $[[\sigma_1,\sigma_2],\sigma_3]$. 
Then ${\rm Nm}_{E/F}(\delta)\equiv b \bmod (E^{\times})^2$. Hence ${\rm Nm}_{E/F}(\delta)$ is in $b(F^{\times})^2 \cup ba(F^{\times})^2 \cup bc(F^{\times})^2\cup bac(F^{\times})^2$. 

Let $A={\rm Nm}_{E/F(\sqrt{a})}(\delta)$ and $C={\rm Nm}_{E/F(\sqrt{c})}(\delta)$. 
Suppose that ${\rm Nm}_{E/F}(\delta)\equiv ba \mod (F^\times)^2$. Then ${\rm Nm}_{F(\sqrt{a})/F}(A)= ba f^2$ for some $f\in F^\times$. From $\sigma_1(A)/A = ba (f/A)^2$, we see that
\[
\sigma_1(\sqrt{A})=(\pm) \sqrt{A} \sqrt{ba} f/A. 
\]
Hence
\[
\begin{aligned}
\sigma_1^2(\sqrt{A}) &= (\pm)\sigma_1(\sqrt{A}) \sigma_1(\sqrt{ba}) (f/\sigma_1(A))\\
&=(\pm)^2 \sqrt{A} \sqrt{ba} (f/A) \sqrt{b} (-\sqrt{a}) (f/\sigma_1(A))\\
&= -\sqrt{A}.
\end{aligned}
\]
This implies that $\sigma_1$ is not of order 2, a contradiction. Hence ${\rm Nm}_{E/F}(\delta)$ is not in  $ba  (F^\times)^2$.

Similarly we can show that ${\rm Nm}_{E/F}(\delta)$ is not in $bc (F^\times)^2 \cup ba(F^\times)^2$. Therefore 
\[
{\rm Nm}_{E/F}(\delta)\equiv b \bmod (F^\times)^2.
\]

 We set $V=\langle [a]_F,[c]_F\rangle$. Then $V$ does not depend on the choice of $\rho$. Since 
\[ {\rm Nm}_{F(\sqrt{a})/F}(A)={\rm Nm}_{E/F}(\delta)=b \bmod (F^\times)^2,\]
 we have $(a,b)=0$. Similarly, we have $(b,c)=0$. Therefore $(b,v)=0$ for every $v\in V$, and the pair $([b]_F,V)$ is admissible.
Let $W$ be the $\F_2[G]$-submodule of $E^\times/(E^\times)^2$ which is dual via  Kummer theory to ${\rm Gal}(L/E)$. Then $W$ does not depend on the choice of $\rho$, and $W$ is free and generated by $\delta$.  
Since $[{\rm Nm}_{E/F}(\delta)]_F=[b]_F$, we see that the triple $([b]_F,V,W)$ is admissible. Since $V$ and $W$ determine $[b]_F$ uniquely, we see that $[b]_F$ does not depend on the choice of $\rho$.
\end{proof}

\begin{thm}
\label{thm:U4 char not 2}
  Let $F$  be a field of characteristic not 2. There is a natural one-to-one correspondence between the set of admissible triples $([b]_F,V,W)$ and the set of Galois $\U_4(\F_2)$-extensions $L/F$. 
\end{thm}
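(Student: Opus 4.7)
The plan is that Propositions~\ref{prop:admissible triple} and~\ref{prop:Galois extension} already construct maps in both directions: write $\Phi$ for the assignment $([b]_F,V,W)\mapsto L=F(\sqrt{V})(\sqrt{W})$ and $\Psi$ for the extraction procedure sending $L/F$ to its associated triple. All that remains is to verify that $\Phi$ and $\Psi$ are mutually inverse and that the bijection is natural.

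First I would check $\Psi\circ\Phi=\id$. Given an admissible triple $([b]_F,V,W)$ with $V=\langle[a]_F,[c]_F\rangle$, form $E=F(\sqrt{V})$ and $L=E(\sqrt{W})$. The proof of Proposition~\ref{prop:admissible triple} exhibits an explicit isomorphism $\rho\colon {\rm Gal}(L/F)\to\U_4(\F_2)$ under which $\rho^{-1}(E_{12})$ and $\rho^{-1}(E_{34})$ restrict to $\sigma_a$ and $\sigma_c$ respectively. Feeding this $\rho$ into the recipe of Proposition~\ref{prop:Galois extension}, the fixed field $M$ of the commutator subgroup contains $\sqrt{a},\sqrt{b},\sqrt{c}$, whose Kummer duals are precisely $\sigma_1|_M,\sigma_2|_M,\sigma_3|_M$; hence the recovered subspace is $V'=\langle[a]_F,[c]_F\rangle=V$ and the recovered intermediate field is $L^H=F(\sqrt{a},\sqrt{c})=E$. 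Since $W\subseteq E^\times/(E^\times)^2$ is the Kummer dual of ${\rm Gal}(L/E)$ by the very construction $L=E(\sqrt{W})$, we obtain $W'=W$. Finally, Lemma~\ref{lem:generator} guarantees $[{\rm Nm}_{E/F}(\delta')]_F=[b]_F$ for every free $\F_2[{\rm Gal}(E/F)]$-generator $\delta'$ of $W$, so $[b']_F=[b]_F$. Since Proposition~\ref{prop:Galois extension} already established independence of the extracted triple from the choice of isomorphism, carrying out the verification with this one convenient $\rho$ suffices.

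The inverse direction $\Phi\circ\Psi=\id$ is then almost immediate: given $L/F$ and the extracted $([b]_F,V,W)$, the field $E=F(\sqrt{V})$ coincides with the fixed field $L^H$ of Proposition~\ref{prop:Galois extension}, and $W$ is by definition the Kummer dual of ${\rm Gal}(L/E)$, whence Kummer theory yields $E(\sqrt{W})=L$.

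The main obstacle is the compatibility check in the first direction: one must pin down that the explicit $\rho$ produced by Proposition~\ref{prop:admissible triple} is consistent with the intrinsic extraction procedure, i.e.\ that the restrictions $\tilde{\sigma}_a|_M,\tilde{\sigma}_b|_M,\tilde{\sigma}_c|_M$ really coincide with the generators $\sigma_a,\sigma_b,\sigma_c$ of ${\rm Gal}(M/F)$ appearing in the construction of $L$. Once this identification is in hand, naturality of the correspondence is automatic, because $\Phi$ depends only on the Kummer-theoretic data of $([b]_F,V,W)$ and $\Psi$ depends only on the Galois-theoretic structure of $L/F$.
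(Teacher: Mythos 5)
Your proposal is correct and follows essentially the same route as the paper: the maps are exactly those furnished by Propositions~\ref{prop:admissible triple} and~\ref{prop:Galois extension}, one checks $\Psi\circ\Phi=\id$ by feeding the explicit isomorphism $\rho$ of Proposition~\ref{prop:admissible triple} into the extraction procedure (legitimate because Proposition~\ref{prop:Galois extension} shows the extracted triple is independent of $\rho$), and $\Phi\circ\Psi=\id$ follows since $L=F(\sqrt{V})(\sqrt{W})$ by Kummer theory. Your extra detail identifying the Kummer duals and invoking Lemma~\ref{lem:generator} to pin down $[b]_F$ is a slightly more explicit rendering of the same verification the paper carries out.
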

\begin{proof} 
By Proposition~\ref{prop:admissible triple} we have a map $\mu$ from the set of admissible triples  $([b]_F,V,W)$ to the set of Galois $\U_4(\F_2)$-extensions $L/F$.
By Proposition~\ref{prop:Galois extension} we have a map $\eta$ from the set of Galois $\U_4(\F_2)$-extensions $L/F$ to  the set of admissible triple  $([b]_F,V,W)$. We show that $\mu$ and $\eta$ are the inverses of each other.

Let $([b]_F,V,W)$ be an admissible triple. Via the map $\mu$ we obtain a $\U_4(\F_2)$-extension $L/F$. Explicitly, if $V=\langle [a]_F,[c]_F\rangle$ and $E=F(\sqrt{a},\sqrt{c})$, then $L=E(\sqrt{W})$ and there is an isomorphism $\rho\colon {\rm Gal}(L/F)\simeq \U_4(\F_2)$ such that
$\rho^{-1}(E_{12})=\sigma_a$, $\rho^{-1}(E_{23})=\sigma_{b}$, $\rho^{-1}(E_{34})=\sigma_c$. (Here $\sigma_a,\sigma_b,\sigma_c$ are defined as in Proposition~\ref{prop:admissible triple}.)
We apply the construction in Proposition~\ref{prop:Galois extension} with this isomorphism $\rho$. Then we obtain back the admissible triple $([b]_F,V,W)$.

Now let $L/F$ be a $\U_4(\F_2)$-extension. Then via the map $\eta$ we obtain an admissible triple $([b]_F,V,W)$. Since $L=F(\sqrt{V})(W)$, we see that $\mu$ sends the triple $([b]_F,V,W)$ back to the extension $L/F$. 
\end{proof}

We apply the theorem above to count the number of Galois $\U_4(\F_2)$-extensions over a 2-adic  field. 
\begin{lem} 
\label{lem:1}
Assume that $F$ is a finite extension of $\Q_2$ of degree $n$. 
Let $q$ be the highest power of $2$ such that $F$ contains a primitive $q$-th root of unity. Then the number $N$ of admissible pairs $([b]_F,V)$ is 
\[
\begin{cases} 
\displaystyle
\frac{4(2^{n+2}-1)(2^n-1)(2^{n-1}-1)}{3} &\text{ if $q\not=2$},\\
\displaystyle \frac{4(2^{n+1}-1)(2^n-1)^2}{3} &\text{ if $q=2$}.
\end{cases}
\]
\end{lem}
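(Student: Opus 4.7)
The plan is to translate the counting into linear algebra on the $\F_2$-vector space $F^\times/(F^\times)^2$ equipped with the Hilbert symbol pairing $\beta([a]_F,[b]_F):=(a,b)_F$. By the standard local theory of a 2-adic field of degree $n$ over $\Q_2$, one has $\dim_{\F_2}(F^\times/(F^\times)^2)=n+2$, and local duality ensures that $\beta$ is a non-degenerate symmetric $\F_2$-bilinear form. In these terms the admissibility conditions translate cleanly: requiring $(b,v)_F=0$ for all $[v]_F\in V$ is the same as $[b]_F\in V^\perp$, and $\dim_{\F_2}\langle V,[b]_F\rangle=3$ is the same as $[b]_F\notin V$. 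So I would be counting pairs $(V,[b]_F)$ with $V\subseteq F^\times/(F^\times)^2$ a 2-dimensional subspace and $[b]_F\in V^\perp\setminus V$.

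I would count by fixing $[b]_F$ first. For a nonzero $[b]_F$, the orthogonal $[b]_F^\perp$ is a hyperplane of dimension $n+1$, which contains exactly $\frac{(2^{n+1}-1)(2^n-1)}{3}$ two-dimensional subspaces (the Gaussian binomial coefficient). A case split by whether $(b,b)_F=0$ finishes this step. If $(b,b)_F=0$, then $[b]_F\in[b]_F^\perp$, and I would subtract off the $2^n-1$ two-dimensional subspaces of $[b]_F^\perp$ that contain $[b]_F$, leaving
\[
\tfrac{(2^{n+1}-1)(2^n-1)}{3}-(2^n-1)=\tfrac{4(2^n-1)(2^{n-1}-1)}{3}
\]
admissible choices for $V$. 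If $(b,b)_F=1$, then $[b]_F\notin[b]_F^\perp$ and every 2-dimensional subspace of $[b]_F^\perp$ is automatically admissible, giving $\tfrac{(2^{n+1}-1)(2^n-1)}{3}$ choices.

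The split between the two asserted formulas then comes from the classical identity $(b,b)_F=(b,-1)_F$. When $q\ne 2$, $-1\in(F^\times)^2$ and every nonzero square class is isotropic; multiplying the $2^{n+2}-1$ nonzero choices of $[b]_F$ by the isotropic subcount gives the first formula. When $q=2$, $[-1]_F$ is a nontrivial class and $[b]_F\mapsto(b,-1)_F$ is a nonzero $\F_2$-linear functional on $F^\times/(F^\times)^2$, whose kernel has cardinality $2^{n+1}$; hence there are $2^{n+1}-1$ nonzero isotropic classes and $2^{n+1}$ anisotropic classes, and I would sum the two contributions to obtain
\[
(2^{n+1}-1)\cdot\tfrac{4(2^n-1)(2^{n-1}-1)}{3}+2^{n+1}\cdot\tfrac{(2^{n+1}-1)(2^n-1)}{3}=\tfrac{4(2^{n+1}-1)(2^n-1)^2}{3},
\]
using the identity $4(2^{n-1}-1)+2^{n+1}=4(2^n-1)$ to factor.

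\textbf{Expected obstacle.} The argument is in essence linear algebra over $\F_2$ once one has two inputs from local class field theory: the non-degeneracy of $\beta$ on $F^\times/(F^\times)^2$ (so that orthogonals have the expected dimension $n+2-k$), and the evaluation $(b,b)_F=(b,-1)_F$ (which reduces to $(b,-b)_F=0$). Neither is deep, but both are essential and control the shape of the final answer. The only delicate bookkeeping is the simplification $\frac{(2^{n+1}-1)(2^n-1)}{3}-(2^n-1)=\frac{4(2^n-1)(2^{n-1}-1)}{3}$, which rests on $2^{n+1}-4=4(2^{n-1}-1)$ and makes the common factor structure between the two cases visible.
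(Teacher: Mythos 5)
Your proposal is correct, and it arrives at both formulas by a genuinely different route than the paper. The paper's proof is a two-line reduction: it sets $N'$ to be the number of ordered triples $([a]_F,[b]_F,[c]_F)$ with $(a,b)=(b,c)=0$ and $\dim_{\F_2}\langle [a]_F,[b]_F,[c]_F\rangle=3$, observes that each admissible pair $([b]_F,V)$ arises from exactly $6$ such triples (the ordered bases of $V$), so $N=N'/6$, and then quotes the values of $N'$ from \cite[Lemma 3.6 and Proposition 3.4]{MT2}. You instead redo the count from scratch as linear algebra on $F^\times/(F^\times)^2\cong \F_2^{\,n+2}$ with the non-degenerate Hilbert pairing: fix $[b]_F\neq 0$, count the $\frac{(2^{n+1}-1)(2^n-1)}{3}$ planes inside the hyperplane $[b]_F^\perp$, discard the $2^n-1$ planes through $[b]_F$ exactly when $(b,b)=(b,-1)=0$, and then split the $2^{n+2}-1$ nonzero classes according to the functional $(\,\cdot\,,-1)$, which is identically zero precisely when $q\neq 2$ (i.e.\ $-1\in(F^\times)^2$) and is a nonzero functional with kernel of size $2^{n+1}$ when $q=2$. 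Your arithmetic checks out in both cases (and at $n=1$ it reproduces the count of $4$ admissible pairs over $\Q_2$ used in Section 5). What the paper's route buys is brevity, at the cost of importing the counts from \cite{MT2}; what your route buys is a self-contained argument (given the standard local facts: $|F^\times/(F^\times)^2|=2^{n+2}$, non-degeneracy of the symbol, and $(b,-b)=0$) together with a structural explanation of where the $q=2$ versus $q\neq 2$ dichotomy comes from, namely whether $[-1]_F$ is trivial and how many classes pair trivially with it.
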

\begin{proof}
Let $N^\prime$ be the number of $([a]_F,[b]_F,[c]_F)$ such that $(a,b)=(b,c)=0$ and that $\dim_{\F_2} \langle [a]_F,[b]_F,[c]_F\rangle=3$. Then $N=N^\prime/6$. This is  because for each given $V$ such that $([b]_F,V)$ is admissible, there are precisely 6 choices of choosing $([a]_F,[c]_F)$ with $V=\langle [a]_F,[c]_F\rangle$. On the other hand, by \cite[Lemma 3.6 and Proposition 3.4]{MT2}, we have
\[
N^\prime=
\begin{cases} 
\displaystyle
(2^{n+2}-1)(2^{n+1}-2)(2^{n+1}-4) &\text{ if $q\not=2$},\\
\displaystyle (2^{n+1}-1)(2^{n+1}-2)(2^{n+2}-4) &\text{ if $q=2$}.
\end{cases}
\]
The result then follows.
\end{proof}

\begin{lem}
\label{lem:2}
 Assume that $F$ is a finite extension of $\Q_2$ of degree $n$. Let us fix an admissible pair $([b]_F,V)$. Then the number of admissible triples $([b]_F,V,W)$ is $2^{3n-1}$. 
\end{lem}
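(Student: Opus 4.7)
The plan is to count admissible triples with $([b]_F,V)$ fixed by passing to the set of eligible generators of $W$ and then dividing by the number of generators each $W$ admits. Throughout, write $E = F(\sqrt{V})$ and $G = {\rm Gal}(E/F)\cong (\Z/2\Z)^2$.

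I would first collect two structural facts. By Lemma~\ref{lem:generator}, each admissible $W$ has exactly $2^3 = 8$ generators as a free $\F_2[G]$-module, namely the classes $[\delta A^{\epsilon_A}C^{\epsilon_C}b^{\epsilon_b}]_E$ with $\epsilon_A,\epsilon_C,\epsilon_b\in\{0,1\}$, and these are distinct because $\{[\delta]_E,[A]_E,[C]_E,[b]_E\}$ is an $\F_2$-basis of $W$. Conversely, every class $[\delta]_E \in E^\times/(E^\times)^2$ satisfying $[{\rm Nm}_{E/F}(\delta)]_F = [b]_F$ automatically generates a free rank-one $\F_2[G]$-module: by Lemma~\ref{lem:modular group ring} it suffices that $N\cdot [\delta]_E = [{\rm Nm}_{E/F}(\delta)]_E = [b]_E$ be non-trivial in $E^\times/(E^\times)^2$ (where $N=\sum_{\sigma\in G}\sigma$), and this holds because admissibility forces $[b]_F\notin V$, so $b\notin (E^\times)^2$.

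These two facts reduce the count to $|S|/8$, where $S = \bar N^{-1}([b]_F)$ and $\bar N \colon E^\times/(E^\times)^2 \to F^\times/(F^\times)^2$ is the map induced by the norm; by the earlier existence lemma, $S\neq\emptyset$, hence $|S| = |\ker\bar N|$. The rest is a local-arithmetic computation. The standard $2$-rank formula for a finite extension of $\Q_2$ gives
\[ |E^\times/(E^\times)^2| = 2^{[E:\Q_2]+2} = 2^{4n+2}, \qquad |F^\times/(F^\times)^2| = 2^{n+2}, \]
while local class field theory yields an isomorphism $F^\times/{\rm Nm}_{E/F}(E^\times) \cong G$. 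Since this quotient has exponent $2$, we obtain $(F^\times)^2 \subseteq {\rm Nm}_{E/F}(E^\times)$; consequently ${\rm im}\,\bar N$ has order $2^{n+2}/4 = 2^n$, so $|\ker\bar N| = 2^{4n+2}/2^n = 2^{3n+2}$, and the count is $2^{3n+2}/8 = 2^{3n-1}$. The only substantive input beyond bookkeeping is the class-field-theoretic inclusion $(F^\times)^2 \subseteq {\rm Nm}_{E/F}(E^\times)$, which follows from the exponent-$2$ structure of $G$; everything else assembles from the already-established lemmas.
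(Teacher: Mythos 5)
Your proposal is correct and follows essentially the same route as the paper: count the classes $[\delta]_E$ with $[{\rm Nm}_{E/F}(\delta)]_F=[b]_F$ via local class field theory (giving $2^{3n+2}$ through the inclusion $(F^\times)^2\subseteq{\rm Nm}_{E/F}(E^\times)$ and the orders $2^{4n+2}$, $2^{n+2}$), then divide by the $8$ generators per module supplied by Lemma~\ref{lem:generator}. Your explicit verification that every such class generates a free $\F_2[G]$-module (via Lemma~\ref{lem:modular group ring} and $[b]_F\notin V$) is a point the paper leaves implicit in its citation of Lemma~\ref{lem:generator}, but it is the same argument.
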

\begin{proof} Let $E=F(\sqrt{V})$.
By local class field theory we have an isomorphism
\[
\dfrac{F^\times}{{\rm Nm}_{E/F}(E^\times)} \simeq {\rm Gal}(E/F)=G.
\]
Since $G$ is of exponent 2, we see that $\dfrac{F^\times}{{\rm Nm}_{E/F}(E^\times)}$ is also of exponent 2. Hence 
\[ (F^\times)^2\subseteq {\rm Nm}_{E/F}(E^\times) \subseteq F^\times.\] 
Since $|G|=4$, we have 
\[
 4=\left| \dfrac{F^\times}{{\rm Nm}_{E/F}(E^\times)}\right|
= \left[\dfrac{F^\times}{(F^\times)^2} : \dfrac{{\rm Nm}_{E/F}(E^\times)}{(F^\times)^2}\right].
\]
By \cite[Chapter II, \S 5, Corollary 5.8]{Neu}, one has $|F^\times/(F^\times)^2|=2^{n+2}$. 
Hence 
\[\left| \dfrac{{\rm Nm}_{E/F}(E^\times)}{(F^\times)^2}\right|=\left|\dfrac{F^\times}{(F^\times)^2}\right|/4=2^{n}.\]

Consider the homomorphism ${\rm Nm}\colon \dfrac{E^\times}{(E^\times)^2}\to \dfrac{F^\times}{(F^\times)^2}$. Then $\im({\rm Nm})= \dfrac{{\rm Nm}_{E/F}(E^\times)}{(F^\times)^2}$.
By \cite[Chapter II, \S 5, Corollary 5.8]{Neu}, one has $|E^\times/(E^\times)^2|=2^{4n+2}$. Hence we have
\[
|\ker {\rm Nm}|= \left|\dfrac{E^\times}{(E^\times)^2}\right|/ |\im({\rm Nm})|= 2^{4n+2}/2^n=2^{3n+2}.
\]
Hence
\[
| \{ [\delta]_E \colon [{\rm Nm}_{E/F}(\delta)]_F =[b]_F\}|= |\ker {\rm Nm}|=2^{3n+2}. 
\]
Therefore by Lemma~\ref{lem:generator} the number of $W$ such that $([b]_F,V,W)$ is admissible, is $2^{3n+2}/8=2^{3n-1}$.
\end{proof}

We recover the following result, which was also obtained in \cite[Theorem 3.8]{MT2}.
\begin{cor} Assume that $F$ is a finite extension of $\Q_2$ of degree $n$. Let $q$ be the highest power of $2$ such that $F$ contains a
primitive $q$-th root of unity. Then the number of Galois $\U_4(\F_2)$-extensions of $F$ is 
\[
\begin{cases} 
\displaystyle
\frac{(2^{n+2}-1)(2^n-1)(2^{n-1}-1)2^{3n+1}}{3} &\text{ if $q\not=2$},\\
\displaystyle \frac{(2^{n+1}-1)(2^n-1)^22^{3n+1}}{3} &\text{ if $q=2$}.
\end{cases}
\]
\end{cor}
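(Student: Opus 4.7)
The plan is to directly combine the bijection supplied by Theorem~\ref{thm:U4 char not 2} with the two counting lemmas \ref{lem:1} and \ref{lem:2}, which were set up precisely for this purpose. Specifically, Theorem~\ref{thm:U4 char not 2} identifies the set of Galois $\U_4(\F_2)$-extensions $L/F$ with the set of admissible triples $([b]_F, V, W)$, so it suffices to count admissible triples.

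To carry out the count, I would partition the admissible triples according to their first two coordinates. An admissible triple has the form $([b]_F, V, W)$ where $([b]_F, V)$ is itself admissible (by definition). So I would first fix an admissible pair $([b]_F, V)$ and count the number of $W$ extending it to an admissible triple. This count is given by Lemma~\ref{lem:2} as $2^{3n-1}$, and crucially it does not depend on the choice of pair. Then I would sum over admissible pairs, whose total count is provided by Lemma~\ref{lem:1}, splitting into cases $q\neq 2$ and $q=2$.

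Putting the pieces together, the number of Galois $\U_4(\F_2)$-extensions equals the number of admissible pairs times $2^{3n-1}$. In the case $q\neq 2$, this gives
\[
\frac{4(2^{n+2}-1)(2^n-1)(2^{n-1}-1)}{3}\cdot 2^{3n-1} \;=\; \frac{(2^{n+2}-1)(2^n-1)(2^{n-1}-1)\,2^{3n+1}}{3},
\]
using $4\cdot 2^{3n-1}=2^{3n+1}$, and similarly in the case $q=2$ one obtains $\frac{(2^{n+1}-1)(2^n-1)^2\,2^{3n+1}}{3}$, exactly the two formulas claimed.

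There is no real obstacle here; the proof is essentially a bookkeeping exercise assembling three previous results. The only point that merits a brief explicit mention is that Lemma~\ref{lem:2} gives a count that is independent of the admissible pair, so the product formula is legitimate; this uniformity is what makes the clean closed form possible.
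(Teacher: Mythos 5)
Your proposal is correct and follows exactly the paper's route: the corollary is proved by combining the bijection of Theorem~\ref{thm:U4 char not 2} with the pair count of Lemma~\ref{lem:1} and the uniform count $2^{3n-1}$ of Lemma~\ref{lem:2}, then multiplying. Your explicit arithmetic and the remark on the independence of the $W$-count from the chosen pair are just the details the paper leaves implicit.
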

\begin{proof} This follows from  Theorem~\ref{thm:U4 char not 2}, Lemma~\ref{lem:1} and Lemma~\ref{lem:2}. 
\end{proof}
\section{Description of Galois $D_8$-extensions}
\subsection{The case of characteristic not 2} 

Let $F$ be a field of characteristic not 2. 
\begin{defn}
An unordered pair $\{[b]_F, [a]_F\}$, where $a$ and $b$ are in $F^\times$, is {\it admissible} if  $(b,a)=0$ and $\dim_{\F_2}(\langle [a]_F,[b]_F\rangle)=2$.
\end{defn}

\begin{lem} Assume that $\{[b]_F,[a]_F\}$ is admissible. Let $E=F(\sqrt{a},\sqrt{b})$.  Then there exists $\delta_1\in F(\sqrt{a})$ such that 
\[
[{\rm Nm}_{F(\sqrt{a})/F}(\delta_1)]_F=[b]_F.
\]
Furthermore for any such $\delta_1$, there exists $\delta_2$ in $F(\sqrt{b})$ such that ${[\delta_1]_E} =[\delta_2]_E$ and 
\[
[{\rm Nm}_{F(\sqrt{b})/F}(\delta_2)]_F=[a]_F.
\]
\end{lem}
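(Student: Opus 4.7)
The plan is to prove both statements by explicit construction, in the spirit of Lemma 2.2. For the existence of $\delta_1$, the hypothesis $(b,a) = 0$ means the quaternion algebra $(a,b)_F$ splits, which by \cite[Chapter 3]{Lam} is equivalent to $b$ being a norm from $F(\sqrt{a})^\times$. Pick $\delta_1 \in F(\sqrt{a})^\times$ with ${\rm Nm}_{F(\sqrt{a})/F}(\delta_1) = b$; then $[{\rm Nm}_{F(\sqrt{a})/F}(\delta_1)]_F = [b]_F$. (More generally, the same argument realizes every representative $bd^2$ of the class $[b]_F$ as a norm.)

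For the second statement, fix an arbitrary $\delta_1$ with ${\rm Nm}_{F(\sqrt{a})/F}(\delta_1) = bd^2$ for some $d \in F^\times$. Write $\delta_1 = x + y\sqrt{a}$ with $x, y \in F$, so $x^2 - ay^2 = bd^2$. Admissibility forces $y \neq 0$ (otherwise $\delta_1 \in F$ and $[{\rm Nm}(\delta_1)]_F = 1 \neq [b]_F$). The rearranged identity $x^2 - bd^2 = ay^2$ immediately suggests taking
\[
\delta_2 := 2(x + d\sqrt{b}) \in F(\sqrt{b}).
\]
Then ${\rm Nm}_{F(\sqrt{b})/F}(\delta_2) = 4(x^2 - bd^2) = 4ay^2$, so $[{\rm Nm}_{F(\sqrt{b})/F}(\delta_2)]_F = [a]_F$. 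For the equality $[\delta_1]_E = [\delta_2]_E$ in $E^\times/(E^\times)^2$, the key is the direct identity
\[
\delta_1 \delta_2 = 2(x+y\sqrt{a})(x+d\sqrt{b}) = (x + y\sqrt{a} + d\sqrt{b})^2,
\]
which one verifies by expanding the right-hand side, collecting terms, and simplifying the constant part via $x^2 + ay^2 + bd^2 = 2x^2$. Hence $\delta_1 \delta_2$ is a square in $E$.

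The main subtlety is the factor of $2$ in $\delta_2$. Without it, $\delta_1 \delta_2$ would equal a square divided by $2$, and for $\Char F \neq 2$ the element $2 \in F^\times$ need not be a square in $E$. A conceptual derivation of this factor uses Hilbert 90 for the quadratic extension $E/F(\sqrt{b})$ (whose nontrivial automorphism is $\sigma_a$): the element $\delta_1/(d\sqrt{b})$ has ${\rm Nm}_{E/F(\sqrt{b})}$ equal to $1$, hence equals $\beta/\sigma_a(\beta)$ with the explicit choice $\beta = 1 + \delta_1/(d\sqrt{b})$; rewriting this as $\delta_1 \cdot {\rm Nm}_{E/F(\sqrt{b})}(\beta) = d\sqrt{b}\,\beta^2$ shows that $\delta_2' := d\sqrt{b}\cdot{\rm Nm}_{E/F(\sqrt{b})}(\beta)$ lies in $F(\sqrt{b})$ and is $\equiv \delta_1$ modulo squares in $E$, while the explicit computation ${\rm Nm}_{E/F(\sqrt{b})}(\beta) = 2(d\sqrt{b}+x)/(d\sqrt{b})$ produces exactly $\delta_2' = 2(x+d\sqrt{b})$, recovering our guess and explaining the factor of $2$.
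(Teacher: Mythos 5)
Your proof is correct and follows essentially the same route as the paper: the existence of $\delta_1$ from the splitting of $(a,b)_F$, and then the same explicit element $\delta_2=2(x+d\sqrt{b})$ with the identity $(x+y\sqrt{a}+d\sqrt{b})^2=2(x+y\sqrt{a})(x+d\sqrt{b})$ and the norm computation $4(x^2-bd^2)=4ay^2$. The added Hilbert 90 derivation of the factor $2$ is a nice motivational supplement but does not change the argument.
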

\begin{proof} 
As $(a,b)=0$ there exists $\delta_1\in F(\sqrt{a})$ (\cite[Chapter XIV, Proposition 4]{Se}) such that 
\[
[{\rm Nm}_{F(\sqrt{a})/F}(\delta_1)]_F=[b]_F.
\]
Now let $\delta$ be any element in $F(\sqrt{a})$ such that $[{\rm Nm}_{F(\sqrt{a})/F}(\delta)]_F=[b]_F$. We write $\delta= x+ y\sqrt{a}$, where $x,y\in F^\times$. Then $x^2=y^2a+bd^2$, for some $d\in F^\times$. Hence
\[
(x+y\sqrt{a}+d\sqrt{b})^2= 2(x+y\sqrt{a})(x+d\sqrt{b}).
\]
Set $\delta_2=2(x+d\sqrt{b})\in F(\sqrt{b})$. Then $[\delta_1]_E=[\delta_2]_E$ and $[{\rm Nm}_{F(\sqrt{b})/F}(\delta_2)]_F= [4(x^2-bd^2)]_F=[4y^2a]_F=[a]_F$. 
\end{proof}

The above lemma shows that the following definition is well-defined.
\begin{defn} Let $P=\{[b]_F, [a]_F\}$ be an admissible unordered pair. Let $E=F(\sqrt{a},\sqrt{b})$.

A one dimensional $\F_2$-subspace $W$ of $E^\times/(E^\times)^2$ is said to be {\it compatible} with $P$ if $W$ is generated by a $\delta\in F(\sqrt{a})$ with $[{\rm Nm}_{F(\sqrt{a})/F}(\delta)]_F=[b]_F$.
In this case we say that $(P,W)$ is {\it admissible}.
\end{defn}

The construction of Galois $D_8$-extension over fields of characteristic not 2 is known. See for example \cite[Theorem 2.2.7]{JLY}. Here we make a description of all Galois $D_8$-extensions over a given field, which is similar to the description of Galois $\U_4(\F_2)$ extensions in Theorem~\ref{thm:U4 char not 2}.

\begin{thm}
\label{thm:D4 char not 2}
  Let $F$  be a field of characteristic not 2.
There is a natural one-to-one correspondence between the set of admissible pairs $(\{[a]_F,[b]_F\},W)$ and the set of Galois $D_8$-extensions $L/F$. 
\end{thm}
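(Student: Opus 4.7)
The plan is to mimic the proof of Theorem~\ref{thm:U4 char not 2}: construct a forward map $\mu$ from admissible pairs $(\{[a]_F,[b]_F\},W)$ to Galois $D_8$-extensions and a backward map $\eta$ in the opposite direction, then verify that they are mutually inverse. For $\mu$, given $(\{[a]_F,[b]_F\},W)$ we write $W=\langle[\delta_1]_E\rangle$ with $\delta_1\in F(\sqrt a)$ and ${\rm Nm}_{F(\sqrt a)/F}(\delta_1)=bd^2$, and set $L:=E(\sqrt{\delta_1})$. To verify ${\rm Gal}(L/F)\simeq D_8$, I would lift $\sigma_a,\sigma_b\in{\rm Gal}(E/F)$ (notation as in Proposition~\ref{prop:admissible triple}) to ${\rm Gal}(L/F)$. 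The identity $\sigma_a(\delta_1)=bd^2/\delta_1$ exhibits $\sigma_a(\delta_1)/\delta_1=(d\sqrt b/\delta_1)^2$ as a square in $E$, so $\sigma_a$ extends to an automorphism $\tilde\sigma_a$ with $\tilde\sigma_a(\sqrt{\delta_1})=d\sqrt b/\sqrt{\delta_1}$; since $\sigma_a(\sqrt b)=\sqrt b$, a direct computation gives $\tilde\sigma_a^2=1$. Because $\delta_1\in F(\sqrt a)$ is fixed by $\sigma_b$, there is an involution $\tilde\sigma_b$ extending $\sigma_b$ with $\tilde\sigma_b(\sqrt{\delta_1})=\sqrt{\delta_1}$. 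A short calculation then yields $(\tilde\sigma_a\tilde\sigma_b)^2(\sqrt{\delta_1})=-\sqrt{\delta_1}$, so $\tilde\sigma_a\tilde\sigma_b$ has order $4$; combined with $[L:F]=8$, this forces ${\rm Gal}(L/F)=\langle\tilde\sigma_a,\tilde\sigma_b\rangle\simeq D_8$. Independence of the ordering of $\{[a]_F,[b]_F\}$ is built in: by the preceding lemma, $W$ is equally generated by some $\delta_2\in F(\sqrt b)$ with ${\rm Nm}_{F(\sqrt b)/F}(\delta_2)\equiv a\bmod (F^\times)^2$, so the symmetric construction produces the same $L$.

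For $\eta$, let $L/F$ be a Galois $D_8$-extension and set $G={\rm Gal}(L/F)$. Since $[G,G]=Z(G)$ has order $2$, the field $E:=L^{Z(G)}$ is a biquadratic extension of $F$. Of the three quadratic subfields of $E/F$, exactly one corresponds to the unique cyclic subgroup of index $2$ in $D_8$; it is characterized intrinsically (hence independently of any isomorphism $G\simeq D_8$) as the unique quadratic subfield $F(\sqrt c)$ of $E$ over which $L$ is cyclic of order $4$. Writing $E=F(\sqrt a,\sqrt b)$ with $[c]_F=[ab]_F$ then canonically attaches the unordered pair $\{[a]_F,[b]_F\}$ to $L$. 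By Kummer theory $L=E(\sqrt\delta)$ for some $\delta\in E^\times$, and since $L/F(\sqrt a)$ is a biquadratic extension containing $\sqrt b$, one can take $\delta=\delta_1\in F(\sqrt a)^\times$ with $[\delta_1]_E=[\delta]_E$. The crucial step, which I expect to be the main technical obstacle, is the norm condition ${\rm Nm}_{F(\sqrt a)/F}(\delta_1)\equiv b\bmod (F^\times)^2$: since the order-$4$ elements of $G$ all restrict on $E$ to the non-trivial element $\sigma_a\sigma_b$ of ${\rm Gal}(E/F(\sqrt c))$, every lift $\tilde\sigma_a$ of $\sigma_a$ is a reflection of order $2$, and computing $\tilde\sigma_a^2(\sqrt{\delta_1})$ in terms of $N:={\rm Nm}_{F(\sqrt a)/F}(\delta_1)$ forces $\sigma_a(\sqrt N)=\sqrt N$, i.e.\ $\sqrt N\in F(\sqrt b)$; this leaves $N\equiv 1$ or $b\bmod (F^\times)^2$, and $N\equiv 1$ is excluded because it would give $L=E$. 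The admissibility $(a,b)=0$ then follows from the existence of such a $\delta_1$, and we set $W:=\langle[\delta_1]_E\rangle$.

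That $\mu$ and $\eta$ are mutually inverse is now straightforward: starting from an admissible pair, the extension $L=E(\sqrt{\delta_1})$ built by $\mu$ has $\tilde\sigma_a\tilde\sigma_b$ of order $4$, so its cyclic subfield is $F(\sqrt{ab})$, and $\eta$ recovers the unordered pair $\{[a]_F,[b]_F\}$ together with $W=\langle[\delta_1]_E\rangle$; conversely, the $\delta_1$ produced by $\eta$ is by construction a Kummer generator of $L/E$ lying in $F(\sqrt a)$ with norm $\equiv b\bmod (F^\times)^2$, so feeding it into $\mu$ reconstructs $L$.
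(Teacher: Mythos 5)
Your overall strategy is the same as the paper's (a forward map $\mu$, a backward map $\eta$, and a check that they are inverse), but the two delicate points are handled by genuinely different arguments, mostly to good effect. For well-definedness, the paper fixes an isomorphism $\rho\colon {\rm Gal}(L/F)\to \U_3(\F_2)$ and proves by a case analysis modulo the commutator subgroup that the resulting unordered pair is independent of $\rho$; you instead characterize everything intrinsically: $E=L^{[G,G]}$, and $\{[a]_F,[b]_F\}$ is read off as the two square classes of the quadratic subfields other than the unique one over which $L$ is cyclic of degree $4$. That is a cleaner, equivalent route. For $\mu$, the paper cites the construction in [MT3], while you verify $D_8$ directly from the two involutions $\tilde\sigma_a,\tilde\sigma_b$ whose product has order $4$; your computations there are correct (including, implicitly, normality of $L/F$, since $\sigma(\delta_1)\equiv\delta_1 \bmod (E^\times)^2$ for all $\sigma\in{\rm Gal}(E/F)$). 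For the norm condition the two proofs diverge: the paper takes the specific $\delta$ dual to $[\sigma_1,\sigma_2]$, for which the equivariant Kummer pairing gives ${\rm Nm}_{F(\sqrt a)/F}(\delta)\in b(F^\times)^2\cup ab(F^\times)^2$, and excludes $ab$ by showing a lift of $\sigma_1$ would then have order $4$; you take an arbitrary Kummer generator $\delta_1\in F(\sqrt a)$ of $L/E$, use that every lift of $\sigma_a$ has order $2$ to force $\tilde\sigma_a(\sqrt N)=\sqrt N$ (and, since every quadratic subextension of $L/F$ lies in $E$, $\sqrt N\in F(\sqrt b)$), hence $N\equiv 1$ or $b$, and must then exclude $N\equiv 1$. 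Note also that all admissible choices of $\delta_1$ differ by $b$ and by squares of $F(\sqrt a)$, so the class $[N]_F$ is independent of the choice; this is worth a sentence.

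The exclusion of $N\equiv 1$ is the one genuine gap: the stated reason, ``it would give $L=E$,'' does not follow from $N\equiv 1$, since nothing in your argument shows that $\delta_1$ would then be a square in $E$. What $N\equiv 1$ actually gives is this: if ${\rm Nm}_{F(\sqrt a)/F}(\delta_1)=f^2$ with $f\in F^\times$, then ${\rm Nm}_{F(\sqrt a)/F}(\delta_1/f)=1$, so by Hilbert 90 there is $\gamma\in F(\sqrt a)^\times$ with $\delta_1=f\,\sigma_a(\gamma)/\gamma=g\,\gamma^{-2}\cdot\gamma^{0}$, more precisely $\delta_1\equiv g \bmod (F(\sqrt a)^\times)^2$ where $g:=f\,{\rm Nm}_{F(\sqrt a)/F}(\gamma)\in F^\times$. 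Hence $L=E(\sqrt{\delta_1})=F(\sqrt a,\sqrt b,\sqrt g)$, which is abelian over $F$: either $g$ is a square in $E$ and $L=E$, contradicting $[L:E]=2$, or $L/F$ is elementary abelian of degree $8$, contradicting ${\rm Gal}(L/F)\simeq D_8$. With this two-line repair (Hilbert 90 plus ``$L/F$ would be abelian''), your backward map is complete and the rest of the proposal, including the mutual-inverse check, goes through.
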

\begin{proof}
Let $(\{[b]_F, [a]_F\}, W)$ be admissible. Let $E=F(\sqrt{a},\sqrt{b})$. Let $L=E(\sqrt{W})$. 
Then $L/F$ is a Galois $D_8$-extension. (See for example \cite[Subsection 2.2]{MT3}.)

Now let $L/F$ be a Galois $D_8$-extension. We identify $D_8$ with $\U_3(\F_2)$. Let $\rho\colon {\rm Gal}(L/F)\to \U_3(\F_2)$ be any isomorphism. Set $\sigma_1=\rho^{-1}(E_{12})$, and $\sigma_2=\rho^{-1}(E_{23})$. Then the commutator subgroup $\Phi=[{\rm Gal}(L/F),{\rm Gal}(L/F)]$ is $\langle [\sigma_1,\sigma_2]\rangle.$

 Let $M$ be the fixed field of $\Phi$. Then $M/F$ is the an 2-elementary abelian extension of $F$, and ${\rm Gal}(M/F)$ is the internal direct sum 
\[
{\rm Gal}(M/F)=\langle \sigma_1|_M \rangle\oplus \langle \sigma_2|_M \rangle \simeq (\Z/2\Z)^2.
\]
Let $[a]_F,[b]_F$ be elements in $F^\times/(F^\times)^2$ which is dual to $\sigma_1|_M, \sigma_2|_M$ respectively via  Kummer theory. Explicitly we require that
\[
\begin{aligned}
\sigma_1(\sqrt{a})=-\sqrt{a},\sigma_1(\sqrt{b})=\sqrt{b};\\
\sigma_2(\sqrt{a})=\sqrt{a},\sigma_2(\sqrt{b})=-\sqrt{b}.
\end{aligned}
\]
\noindent
{\bf Claim:} $\{[b]_F,[a]_F\}$ does not depend on the choice of $\rho$.  

\noindent
{\it Proof of Claim:} Suppose that $\rho^\prime\colon {\rm Gal}(L/F)\to \U_4(\F_2)$ is another isomorphism. We define $\sigma^\prime_1={\rho^\prime}^{-1}(E_{12})$, and $\sigma^\prime_2={\rho^\prime}^{-1}(E_{23})$. We need to show that $\{\sigma_1|_M,\sigma_2|_M\}=\{\sigma^\prime_1|_M,\sigma^\prime_2|_M\}$. We first note that $\Phi$ is the center of ${\rm Gal}(L/F)$.

Because $\sigma_2^\prime|_M$ is in ${\rm Gal}(M/F)=\langle \sigma_1|_M \rangle\oplus \langle \sigma_2|_M\rangle$, we have that modulo the subgroup $\Phi$, $\sigma_2^\prime$ is equal to one of the following elements $\sigma_1,\sigma_2$, or $\sigma_1\sigma_2$. 

If $\sigma_2^\prime=\sigma_1\sigma_2$ modulo $\Phi$, then ${\sigma_2^\prime}^2=(\sigma_1\sigma_2)^2\not=1$, a contradiction. Similarly $\sigma_1^\prime$ cannot be $\sigma_1\sigma_2$ modulo $\Phi$.

{\bf Case 1:} $\sigma_2^\prime=\sigma_1$ modulo $\Phi$. In this case $\sigma_1^\prime$ cannot be $\sigma_1$ modulo $\Phi$. Otherwise it would lead to a contradiction that 
$1\not= [\sigma^\prime_1,\sigma^\prime_2]=[\sigma_1,\sigma_1]=1$. Hence $\sigma^\prime_1=\sigma_2$ modulo $\Phi$. 

{\bf Case 2:} $\sigma_2^\prime=\sigma_2$ modulo $\Phi$. In this case $\sigma_1^\prime$ cannot be $\sigma_2$ modulo $\Phi$. Otherwise it would lead to a contradiction that 
$1\not= [\sigma^\prime_1,\sigma^\prime_2]=[\sigma_2,\sigma_2]=1$. Hence $\sigma^\prime_1=\sigma_1$ modulo $\Phi$. 

In both cases we have $\{\sigma_1|_M,\sigma_2|_M\}=\{\sigma^\prime_1|_M,\sigma^\prime_2|_M\}$, as desired.
\\
\\  
We have an exact sequence 
\[
1 \to {\rm Gal}(L/F(\sqrt{a}))  \to {\rm Gal}(L/F) \to {\rm Gal}(F(\sqrt{a})/F) \to 1.
\]
Then ${\rm Gal}(L/F(\sqrt{a})$ is an $\F_2[{\rm Gal}(F(\sqrt{a})/F)]$-module where the action is by conjugation. We also have the ${\rm Gal}(F(\sqrt{a})/F)$-equivariant Kummer pairing 
\[
\frac{F(\sqrt{a})\cap (L^\times)^2}{(F(\sqrt{a})^\times)^2}\times {\rm Gal}(L/F(\sqrt{a})) \to \F_2.
\]
As an $\F_2$-vector space, ${\rm Gal}(L/F(\sqrt{a}))$ has a basis consisting of $\sigma_2$, $[\sigma_1,\sigma_2]$. 
Let $\delta$ be the element dual to $[\sigma_1,\sigma_2]$. 
Then ${\rm Nm}_{F(\sqrt{a})/F}(\delta)\equiv b \bmod (F(\sqrt{a})^{\times})^2$. Hence ${\rm Nm}_{F(\sqrt{a})/F}(\delta)$ is in $b(F^{\times})^2 \cup ba(F^{\times})^2$. 

Suppose that ${\rm Nm}_{F(\sqrt{a})/F}(\delta) = baf^2$,  for some $f\in F^\times$. From $\sigma_1(\delta)/\delta = ba (f/\delta)^2$, we see that
\[
\sigma_1(\sqrt{\delta})=(\pm) \sqrt{\delta} \sqrt{ba} f/\delta. 
\]
Hence
\[
\begin{aligned}
\sigma_1^2(\sqrt{\delta}) &= (\pm)\sigma_1(\sqrt{\delta}) \sigma_1(\sqrt{ba}) (f/\sigma_1(\delta))\\
&=(\pm)^2 \sqrt{\delta} \sqrt{ba} (f/\delta) \sqrt{b} (-\sqrt{a}) (f/\sigma_1(\delta))\\
&= -\sqrt{\delta}.
\end{aligned}
\]
This implies that $\sigma_1$ is not of order 2, a contradiction. Hence we have $[{\rm Nm}_{E/F}(\delta)]_F=[b]_F$. Let $W$ be the one dimensional $\F_2$-subspace of $M^\times/(M^\times)^2$ generated by $[\delta]_M$. Then $W$ is compatible with $\{[a]_F,[b]_F\}$. Also since $L= M(\sqrt{W})$, we see that $W$ does not depend on the choice of $\rho$.
\end{proof}

\begin{lem}
\label{lem:3}
 Assume that $F$ is a finite extension of $\Q_2$ of degree $n$. 
 Let $q$ be the highest power of $2$ such that $F$ contains a primitive $q$-th root of unity. 
Then the number $N$ of admissible unordered pairs $\{[a]_F,[b]_F\}$ is 
\[
\begin{cases} 
(2^{n+2}-1)(2^{n}-1) &\text{ if $q\not= 2$},\\
(2^{n+1}-1)^2 &\text{ if $q=2$}.
\end{cases}
\]
\end{lem}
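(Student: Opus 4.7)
The plan is to count ordered admissible pairs $([a]_F,[b]_F)$ — call this number $N^\prime$ — and then set $N=N^\prime/2$, which is legitimate because admissibility forces $[a]_F\neq[b]_F$. So for each fixed nontrivial $[b]_F\in F^\times/(F^\times)^2$ I count the $[a]_F$ satisfying $(a,b)=0$ and $[a]_F\notin\{1,[b]_F\}$, then sum over $[b]_F$.

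First I would identify the subgroup $H_b:=\{[a]_F : (a,b)=0\}\subseteq F^\times/(F^\times)^2$. Since $(a,b)=0$ if and only if $a\in{\rm Nm}_{F(\sqrt b)/F}(F(\sqrt b)^\times)$, local class field theory realizes $H_b$ as a subgroup of index $2$ in $F^\times/(F^\times)^2$. Combined with the formula $|F^\times/(F^\times)^2|=2^{n+2}$ from \cite[Chapter II, \S 5, Corollary 5.8]{Neu}, this yields $|H_b|=2^{n+1}$. Hence for each nontrivial $[b]_F$, the number of valid $[a]_F$ equals $2^{n+1}-1-\epsilon_b$, where $\epsilon_b=1$ when $[b]_F\in H_b$ and $\epsilon_b=0$ otherwise.

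Next I would use the standard identity $(b,b)=(b,-1)$ in the Brauer group, which follows from $(b,-b)=0$, to reduce the condition $\epsilon_b=1$ to the single condition $(b,-1)=0$. I then split on $q$. If $q\neq 2$, then $-1\in(F^\times)^2$, so $(b,-1)=0$ always and $\epsilon_b=1$ for every nontrivial $[b]_F$; summing over the $2^{n+2}-1$ nontrivial classes yields $N^\prime=(2^{n+2}-1)(2^{n+1}-2)=2(2^{n+2}-1)(2^n-1)$, so $N=(2^{n+2}-1)(2^n-1)$. If $q=2$, then $[-1]_F\neq 1$ and $\{[b]_F : (b,-1)=0\}=H_{-1}$ has order $2^{n+1}$, so the number of nontrivial $[b]_F$ with $\epsilon_b=1$ is $2^{n+1}-1$; this gives $N^\prime=(2^{n+2}-1)(2^{n+1}-1)-(2^{n+1}-1)=2(2^{n+1}-1)^2$, so $N=(2^{n+1}-1)^2$.

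The main subtle point is the clean reduction $\epsilon_b=1\Leftrightarrow(b,-1)=0$ via the Brauer-group identity $(b,b)=(b,-1)$, together with the index computation $|H_b|=2^{n+1}$ from local class field theory; once those are in hand the remaining case analysis is pure bookkeeping.
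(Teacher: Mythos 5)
Your argument is correct. You and the paper both reduce to counting ordered pairs and dividing by two: the paper sets $N=N'/2$ where $N'$ is the number of ordered pairs $([a]_F,[b]_F)$ with $(a,b)=0$ and $\dim_{\F_2}\langle[a]_F,[b]_F\rangle=2$, exactly as you do. The difference is in how $N'$ is obtained: the paper simply quotes the values $(2^{n+2}-1)(2^{n+1}-2)$ for $q\neq 2$ and $2(2^{n+1}-1)^2$ for $q=2$ from \cite[Remark 3.9]{MT2}, whereas you derive them from scratch. Your derivation is sound: for nontrivial $[b]_F$ the set $H_b=\{[a]_F:(a,b)=0\}$ is the image of ${\rm Nm}_{F(\sqrt b)/F}(F(\sqrt b)^\times)$ in $F^\times/(F^\times)^2$, hence a subgroup of index $2$, of order $2^{n+1}$ by Neukirch's formula $|F^\times/(F^\times)^2|=2^{n+2}$; the correction term $\epsilon_b$ is governed by $(b,b)=(b,-1)$ (from $(b,-b)=0$ and bilinearity), and the dichotomy $q\neq 2\Leftrightarrow -1\in(F^\times)^2$ makes the two cases come out as $(2^{n+2}-1)(2^{n+1}-2)$ and $(2^{n+1}-1)(2^{n+2}-2)=2(2^{n+1}-1)^2$, matching the cited values. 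What your route buys is a self-contained proof within the lemma, using only local class field theory and standard quaternion-algebra identities, at the cost of a slightly longer argument; the paper's route is shorter but leaves the actual counting to the external reference.
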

\begin{proof}
Let $N^\prime$ be the number of $([a]_F,[b]_F)$ such that $(a,b)=0$ and that $\dim_{\F_2} \langle [a]_F,[b]_F\rangle=2$. Then $N=N^\prime/2$. On the other hand, by \cite[Remark 3.9]{MT2}, we have
\[
N^\prime=
\begin{cases} 
\displaystyle
(2^{n+2}-1)(2^{n+1}-2) &\text{ if $q\not= 2$},\\
2(2^{n+1}-1)^2 &\text{ if $q=2$}.
\end{cases}
\]
The result then follows.
\end{proof}

\begin{lem}
\label{lem:4}
 Assume that $F$ is a finite extension of $\Q_2$. Let us fix an unordered admissible pair $\{[a]_F,[b]_F\}$. Then the number of admissible pairs $(\{[a]_F,[b]_F\},W)$ is $2^{n}$. 
\end{lem}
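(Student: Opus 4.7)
The plan is to count the one-dimensional $\F_2$-subspaces $W \subseteq E^\times/(E^\times)^2$ by counting the non-trivial classes $[\delta]_E$ that arise from elements $\delta \in K := F(\sqrt{a})$ satisfying $[\mathrm{Nm}_{K/F}(\delta)]_F = [b]_F$, where $E = F(\sqrt{a},\sqrt{b})$. The count will be done in two steps: first counting such $\delta$ modulo $(K^\times)^2$, then quotienting by the kernel of the natural map $K^\times/(K^\times)^2 \to E^\times/(E^\times)^2$.

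First I would compute the size of the fiber of the norm homomorphism $\mathrm{Nm}\colon K^\times/(K^\times)^2 \to F^\times/(F^\times)^2$ above $[b]_F$. By local class field theory applied to the quadratic extension $K/F$, $F^\times/\mathrm{Nm}_{K/F}(K^\times) \cong \mathrm{Gal}(K/F) \cong \Z/2\Z$, and since $(F^\times)^2 \subseteq \mathrm{Nm}_{K/F}(K^\times)$, the image of $\mathrm{Nm}$ has index $2$ in $F^\times/(F^\times)^2$. Using $|F^\times/(F^\times)^2| = 2^{n+2}$ and $|K^\times/(K^\times)^2| = 2^{2n+2}$ from \cite[Chapter II, \S 5, Corollary 5.8]{Neu}, one gets $|\ker \mathrm{Nm}| = 2^{2n+2}/2^{n+1} = 2^{n+1}$. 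By the lemma preceding the definition of compatibility, $[b]_F$ lies in the image of $\mathrm{Nm}$, so the fiber above $[b]_F$ has exactly $2^{n+1}$ elements.

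Next I would identify the kernel of $K^\times/(K^\times)^2 \to E^\times/(E^\times)^2$. Suppose $x \in K^\times$ and $x = y^2$ with $y \in E$. Writing $y = u + v\sqrt{b}$ with $u,v \in K$ and expanding $y^2 = u^2 + v^2 b + 2uv\sqrt{b}$, the condition $y^2 \in K$ together with $\mathrm{char}\,F \neq 2$ forces $uv = 0$, so $x \in (K^\times)^2 \cup b(K^\times)^2$. Admissibility of $\{[a]_F,[b]_F\}$ gives that $[a]_F$ and $[b]_F$ are $\F_2$-linearly independent, hence $b \notin (K^\times)^2$, so the kernel is exactly $\{1,[b]_K\}$, of order $2$.

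Finally I would combine these two facts. Since $\mathrm{Nm}_{K/F}(b) = b^2 \in (F^\times)^2$, multiplication by $b$ preserves the fiber above $[b]_F$, and pairs the $2^{n+1}$ elements of the fiber into $2^n$ classes with the same image in $E^\times/(E^\times)^2$. Each resulting image class is non-trivial: if $[\delta]_E = 1$, then $\delta \equiv 1$ or $\delta \equiv b \pmod{(K^\times)^2}$, and both have norm trivial in $F^\times/(F^\times)^2$, contradicting $[\mathrm{Nm}_{K/F}(\delta)]_F = [b]_F$. Therefore the $2^n$ image classes give $2^n$ distinct admissible $W$'s, and conversely every compatible $W$ arises this way. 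The main technical step is the determination of $K^\times \cap (E^\times)^2$ in step two; the rest is a bookkeeping computation with local class field theory and index formulas.
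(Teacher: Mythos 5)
Your proof is correct and takes essentially the same route as the paper: local class field theory together with the index formula $|F^\times/(F^\times)^2|=2^{n+2}$, $|F(\sqrt{a})^\times/(F(\sqrt{a})^\times)^2|=2^{2n+2}$ gives $2^{n+1}$ square classes $[\delta]_{F(\sqrt{a})}$ with norm class $[b]_F$, and one then divides by $2$ to get $2^n$ subspaces $W$. Your explicit identification of the kernel of $F(\sqrt{a})^\times/(F(\sqrt{a})^\times)^2\to E^\times/(E^\times)^2$ as $\{1,[b]\}$, and the check that the resulting classes in $E^\times/(E^\times)^2$ are nontrivial, simply spells out the final division by $2$ that the paper leaves implicit.
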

\begin{proof}
By local class field theory we have an isomorphism
\[
\dfrac{F^\times}{{\rm Nm}_{F(\sqrt{a})/F}(F(\sqrt{a})^\times)} \simeq {\rm Gal}(F(\sqrt{a})/F)=\Z/2\Z.
\]
Since $G$ is of exponent 2, we see that $\dfrac{F^\times}{{\rm Nm}_{F(\sqrt{a})/F}(F(\sqrt{a})^\times)}$ is also of exponent 2. Hence 
\[ (F^\times)^2\subseteq {\rm Nm}_{F(\sqrt{a})/F}(F(\sqrt{a})^\times) \subseteq F^\times.\] 
Since $|G|=2$, we have 
\[
 2=\left| \dfrac{F^\times}{{\rm Nm}_{F(\sqrt{a})/F}(F(\sqrt{a})^\times)}\right|
= \left[\dfrac{F^\times}{(F^\times)^2} : \dfrac{{\rm Nm}_{F(\sqrt{a})/F}(F(\sqrt{a})^\times)}{(F^\times)^2}\right].
\]
By \cite[Chapter II, \S 5, Corollary 5.8]{Neu}, one has $|F^\times/(F^\times)^2|=2^{n+2}$. 
Hence 
\[\dfrac{{\rm Nm}_{F(\sqrt{a})/F}(E^\times)}{(F^\times)^2}=\left|\dfrac{F^\times}{(F^\times)^2}\right|/2=2^{n+1}\].

Consider the homomorphism ${\rm Nm}\colon \dfrac{F(\sqrt{a})^\times}{(F(\sqrt{a})^\times)^2}\to \dfrac{F^\times}{(F^\times)^2}$. Then $\im({\rm Nm})= \dfrac{{\rm Nm}_{E/F}(E^\times)}{(F^\times)^2}$.
By \cite[Chapter II, \S 5, Corollary 5.8]{Neu}, one has $|F(\sqrt{a})^\times/(F(\sqrt{a})^\times)^2|=2^{2n+2}$. Hence we have
\[
|\ker {\rm Nm}|= \left|\dfrac{F(\sqrt{a})^\times}{(F(\sqrt{a})^\times)^2}\right|/ |\im({\rm Nm})|= 2^{2n+2}/2^{n+1}=2^{n+1}.
\]
Hence
\[
| \{ [\delta]_{F(\sqrt{a})} \colon [{\rm Nm}_{F(\sqrt{a})/F}(\delta)]_F =[b]_F\}|= |\ker {\rm Nm}|=2^{n+1}. 
\]
Therefore the number of $W$ such that $(\{[b]_F,[a]_F\},W)$ is admissible, is $2^{n+1}/2=2^{n}$.
\end{proof}

We recover the following result, which was also obtained in \cite[Theorem 2.2]{Ya} (see also \cite[Theorem 11]{MNg}, \cite[Remark 3.9]{MT3}).
\begin{cor} 
Assume that $F$ is a finite extension of $\Q_2$ of degree $n$. Let $q$ be the highest power of $2$ such that $F$ contains a
primitive $q$-th root of unity. Then the number of Galois $D_8$-extensions of $F$ is 
\[
\begin{cases} 
2^n(2^{n+2}-1)(2^n-1) &\text{ if $q\not=2$},\\
2^n(2^{n+1}-1)^2 &\text{ if $q=2$}.
\end{cases}
\]
\end{cor}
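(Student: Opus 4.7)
The proof is essentially a bookkeeping exercise combining the bijection from Theorem~\ref{thm:D4 char not 2} with the two counting lemmas immediately preceding the corollary. My plan is to set up the counting via the fibration
\[
\{\text{Galois $D_8$-extensions } L/F\} \;\longleftrightarrow\; \{(\{[a]_F,[b]_F\},W)\text{ admissible}\} \;\xrightarrow{\;\pi\;}\; \{\{[a]_F,[b]_F\}\text{ admissible}\},
\]
where the left arrow is the bijection of Theorem~\ref{thm:D4 char not 2} and $\pi$ forgets $W$. The total count is then $(\text{number of base points})\cdot(\text{size of a fiber})$, and both factors are already computed.

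First, I would invoke Theorem~\ref{thm:D4 char not 2} to reduce the problem of counting $D_8$-extensions to the problem of counting admissible pairs $(\{[a]_F,[b]_F\},W)$. Next, I would note that Lemma~\ref{lem:4} asserts that the fiber of $\pi$ above any fixed admissible unordered pair $\{[a]_F,[b]_F\}$ has exactly $2^n$ elements, independently of which admissible pair is chosen; in particular the total count factors as $N\cdot 2^n$, where $N$ is the number of admissible unordered pairs. Finally, Lemma~\ref{lem:3} supplies $N$: it equals $(2^{n+2}-1)(2^n-1)$ if $q\neq 2$ and $(2^{n+1}-1)^2$ if $q=2$.

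Multiplying these two factors yields the stated formulas, namely $2^n(2^{n+2}-1)(2^n-1)$ in the case $q\neq 2$ and $2^n(2^{n+1}-1)^2$ in the case $q=2$. Since the three inputs (Theorem~\ref{thm:D4 char not 2}, Lemma~\ref{lem:3}, Lemma~\ref{lem:4}) are already established, there is no genuine obstacle here; the only point worth checking is that the fiber count $2^n$ of Lemma~\ref{lem:4} is indeed uniform in the base point, which is visible from its proof because the argument only uses the degree $[F(\sqrt{a}):F]=2$ and the structure of $F^\times/(F^\times)^2$ for a $2$-adic field, not any specific feature of the chosen admissible pair. Thus the corollary follows immediately.
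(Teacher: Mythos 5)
Your proposal is correct and follows exactly the paper's argument: the corollary is obtained by combining the bijection of Theorem~\ref{thm:D4 char not 2} with the count of admissible unordered pairs in Lemma~\ref{lem:3} and the uniform fiber count $2^n$ of Lemma~\ref{lem:4}, then multiplying. Your additional remark that the fiber count is independent of the chosen admissible pair is a sensible sanity check but introduces nothing beyond what the paper's proof already uses.
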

\begin{proof} This follows from  Theorem~\ref{thm:D4 char not 2}, Lemma~\ref{lem:3} and Lemma~\ref{lem:4}. 
\end{proof}
\subsection{The case of characteristic 2}
Let $F$ be a field of characteristic 2. 
\begin{defn}
An unordered pair $\{[b]_F, [a]_F\}$, where $a$ and $b$ are in $F^\times$ is {\it admissible} if  $\dim_{\F_2}(\langle [a]_F,[b]_F\rangle)=2$.
\end{defn}

\begin{lem} 
Assume that $\{[b]_F,[a]_F\}$ is admissible. Let $E=F(\theta_a,\theta_b)$.  Then there exists $\delta_1\in F(\theta_{a})$ such that 
\[
[{\rm Tr}_{F(\theta_{a})/F}(\delta_1)]_F=[b]_F.
\]
Furthermore for any such $\delta_1$, there exists $\delta_2$ in $F(\theta_{b})$ such that ${[\delta_1]_E} =[\delta_2]_E$ and 
\[
[{\rm Tr}_{F(\theta_{b})/F}(\delta_2)]_F=[a]_F.
\]
\end{lem}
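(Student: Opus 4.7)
The plan is to rewrite $\delta_1$ modulo $\wp(E)$, using a key Artin-Schreier identity that interchanges $\theta_a$ and $\theta_b$, until it lands in $F(\theta_b)$. The nontrivial element of ${\rm Gal}(F(\theta_a)/F)$ sends $\theta_a$ to $\theta_a+1$, so for $\delta = x+y\theta_a$ with $x,y\in F$ one has
\[
{\rm Tr}_{F(\theta_a)/F}(\delta) = (x+y\theta_a)+(x+y(\theta_a+1)) = y
\]
in characteristic $2$. Taking $\delta_1 = b\theta_a$ therefore yields ${\rm Tr}_{F(\theta_a)/F}(\delta_1)=b$, proving the existence claim a fortiori.

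For the second assertion, given $\delta_1 = x+y\theta_a\in F(\theta_a)$ with $[y]_F=[b]_F$, I would write $y=b+\wp(z)$ for some $z\in F$. The key identity is
\[
\wp(\theta_a\theta_b) = \theta_a^2\theta_b^2 - \theta_a\theta_b = (\theta_a+a)(\theta_b+b) - \theta_a\theta_b = b\theta_a + a\theta_b + ab,
\]
obtained from $\theta_a^2=\theta_a+a$ and $\theta_b^2=\theta_b+b$. In characteristic $2$ this gives $b\theta_a\equiv a\theta_b+ab\pmod{\wp(E)}$. A similar computation yields $\wp(z\theta_a) = \wp(z)\theta_a + z^2a$, hence $\wp(z)\theta_a\equiv z^2a\pmod{\wp(E)}$.

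Combining these,
\[
\delta_1 = x + b\theta_a + \wp(z)\theta_a \equiv (x+z^2a) + b\theta_a \equiv (x+z^2a+ab) + a\theta_b \pmod{\wp(E)}.
\]
I would then set $\delta_2 := (x+z^2a+ab) + a\theta_b \in F(\theta_b)$; by construction $[\delta_1]_E=[\delta_2]_E$ in $E/\wp(E)$, and the trace formula established in the first part, applied now to $F(\theta_b)/F$, gives ${\rm Tr}_{F(\theta_b)/F}(\delta_2)=a$, so $[{\rm Tr}_{F(\theta_b)/F}(\delta_2)]_F=[a]_F$. The main obstacle is recognising the Artin-Schreier identity $\wp(\theta_a\theta_b) = b\theta_a+a\theta_b+ab$; once it is in hand, the argument reduces to bookkeeping. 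Admissibility of $\{[b]_F,[a]_F\}$ enters only implicitly, guaranteeing that $F(\theta_a)/F$ and $F(\theta_b)/F$ are genuine degree-$2$ extensions and that $[E:F]=4$.
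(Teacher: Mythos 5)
Your argument is correct and is essentially the paper's own proof: the two identities $\wp(\theta_a\theta_b)=b\theta_a+a\theta_b+ab$ and $\wp(z\theta_a)=\wp(z)\theta_a+az^2$ are exactly the terms $(\theta_a\theta_b)^2-\theta_a\theta_b$ and $(d\theta_a)^2-d\theta_a$ the paper subtracts off, and your $\delta_2=x+az^2+ab+a\theta_b$ is the paper's $\delta_2$. The only cosmetic difference is that you exhibit $\delta_1=b\theta_a$ explicitly where the paper invokes surjectivity of the trace.
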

\begin{proof} 
As the trace map ${\rm Tr}_{F(\theta_a)/F}$ is surjective, there exists $\delta_1\in F(\theta_{a})$  such that 
\[
[{\rm Tr}_{F(\theta_{a})/F}(\delta_1)]_F=[b]_F.
\]

Now let $\delta_1$ be any element in $F(\theta_{a})$ such that $[{\rm Tr}_{F(\theta_{a})/F}(\delta_1)]_F=[b]_F$. We write $\delta_1= x+ y\theta_{a}$, where $x,y\in F$. Then $y=b+\wp(d)$, for some $d\in F$. We have
\[
\begin{aligned}
\delta_1+ x+ a\theta_b+ ab+ ad^2 &= x+ (b+\wp(d))\theta_a +x+ a\theta_b + ab +ad^2\\
&=  [b\theta_a + a\theta_b+ ab] +[ \wp(d)\theta_a+ ad^2]\\
&= [(\theta_a\theta_b)^2-\theta_a\theta_b] + [(d\theta_a)^2-d\theta_a].
\end{aligned}.
\]
Set $\delta_2=x+ a\theta_b+ ab+ ad^2 \in F(\theta_{b})$. Then $[\delta_1]_E=[\delta_2]_E$ and $[{\rm Tr}_{F(\theta_{b})/F}(\delta_2)]_F= [a]_F$. 
\end{proof}

The above lemma shows that the following definition is well-defined.
\begin{defn} Let $P=\{[b]_F, [a]_F\}$ be an admissible unordered pair. Let $E=F(\theta_{a},\theta_{b})$.

A one dimensional $\F_2$-subspace $W$ of $E/\wp(E)$ is said to be {\it compatible} with $P$ if $W$ is generated by a $\delta\in F(\theta_{a})$ with $[{\rm Tr}_{F(\theta_{a})/F}(\delta)]_F=[b]_F$.
In this case we say that $(P,W)$ is {\it admissible}.
\end{defn}

\begin{lem} Let $\{[a]_F,[b]_F\}$ be an admissible unordered pair Let $E=F(\theta_{a},\theta_{b})$. Let $\delta\in F(\theta_a)$ with $[{\rm Tr}_{F(\theta_{a})/F}(\delta)]_F=[b]_F$. Then $E(\theta_{\delta})/F$ is a Galois $D_8$-extension.
\end{lem}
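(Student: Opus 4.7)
The plan is to verify in turn that $[E(\theta_\delta):F]=8$, that $E(\theta_\delta)/F$ is Galois, and that its Galois group is $D_8$. Writing $\delta = x+y\theta_a$ with $x,y\in F$, we have $\Tr_{F(\theta_a)/F}(\delta)=y$ (since $\Tr_{F(\theta_a)/F}(\theta_a)=1$ and $\Tr_{F(\theta_a)/F}(x)=2x=0$), so the hypothesis gives $y=b+\wp(d)$ for some $d\in F$. For the degree, I would show that $[\delta]_E\neq 0$ in $E/\wp(E)$: if on the contrary $\delta=\wp(u)$ for some $u\in E$, write $u=s+t\theta_b$ with $s,t\in F(\theta_a)$; using $\theta_b^2=\theta_b+b$ and expanding in characteristic $2$ gives
\[
\delta=\wp(s)+t^2b+(t^2-t)\theta_b.
\]
Since $\delta\in F(\theta_a)$, the $\theta_b$-coefficient vanishes, forcing $t\in\F_2$ and $\delta\equiv tb\bmod\wp(F(\theta_a))$. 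Applying $\Tr_{F(\theta_a)/F}$ (which commutes with $\wp$ in characteristic $2$, since $\Tr(s^2)=(\Tr s)^2$ for Galois extensions) and using $\Tr_{F(\theta_a)/F}(b)=2b=0$ then gives $y=\Tr_{F(\theta_a)/F}(\delta)\in\wp(F)$, contradicting $y\equiv b\not\equiv 0\bmod\wp(F)$. Hence $[E(\theta_\delta):E]=2$ and $[E(\theta_\delta):F]=8$.

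For Galoisness, let $\sigma_a,\sigma_b$ be the generators of $\Gal(E/F)$ with $\sigma_a(\theta_a)=\theta_a+1$, $\sigma_a(\theta_b)=\theta_b$ and symmetrically for $\sigma_b$. Then $\sigma_a(\delta)-\delta=y=b+\wp(d)=\wp(\theta_b)+\wp(d)=\wp(\theta_b+d)\in\wp(E)$, while $\sigma_b(\delta)=\delta$ since $\delta\in F(\theta_a)$ is fixed by $\sigma_b$. Hence the Artin--Schreier class $[\delta]_E\in E/\wp(E)$ is $\Gal(E/F)$-stable, so $E(\theta_\delta)/F$ is Galois.

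Finally, I would lift $\sigma_a,\sigma_b$ to $\tilde\sigma_a,\tilde\sigma_b\in\Gal(E(\theta_\delta)/F)$. The identities above, together with $\wp(\theta_\delta)=\delta$, force
\[
\tilde\sigma_a(\theta_\delta)=\theta_\delta+\theta_b+d+\epsilon,\qquad \tilde\sigma_b(\theta_\delta)=\theta_\delta+\eta
\]
for some $\epsilon,\eta\in\F_2$. A direct calculation then yields $\tilde\sigma_a^2=\tilde\sigma_b^2=1$ and $(\tilde\sigma_a\tilde\sigma_b)^2(\theta_\delta)=\theta_\delta+1$, so the commutator $[\tilde\sigma_a,\tilde\sigma_b]$ equals the nontrivial element $\tau$ of $\Gal(E(\theta_\delta)/E)$ and $\tilde\sigma_a\tilde\sigma_b$ has order $4$. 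Since the Galois group has order $8$, contains an element of order $4$, and the involution $\tilde\sigma_a$ does not lie in $\langle\tilde\sigma_a\tilde\sigma_b\rangle$ (its restriction $\sigma_a$ to $E$ is nontrivial), it must be $D_8$ rather than $Q_8$. The main obstacle is step one, the nonvanishing of $[\delta]_E$; once this is established, the remaining group-theoretic identification reduces to the routine commutator calculation above.
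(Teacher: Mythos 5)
Your proof is correct, but it takes a genuinely different route from the paper. The paper's own argument is very short: it only records how $\Gal(E/F)$ acts on $\delta$, namely $\sigma_a(\delta)=\delta+b+\wp(d)$ and $\sigma_b(\delta)=\delta$, and then invokes the general construction in the proof of Proposition 4.1 of the cited paper of Min\'a\v{c} and T\^an (MT3) to conclude both that $L=E(\theta_\delta)/F$ is Galois and that its group is $D_8$, together with the explicit lift $\sigma_a(\theta_\delta)=\theta_\delta+\theta_b+d$. You instead give a self-contained verification: you prove $[\delta]_E\neq 0$ in $E/\wp(E)$ by expanding a putative $\delta=\wp(s+t\theta_b)$ and applying $\Tr_{F(\theta_a)/F}$ (using that the trace commutes with $\wp$ in characteristic $2$), which yields the degree $8$; you check Galois stability of the class $[\delta]_E$ exactly via the same two action formulas the paper computes; and you identify the group by lifting $\sigma_a,\sigma_b$, checking $\tilde\sigma_a^2=\tilde\sigma_b^2=1$ and $(\tilde\sigma_a\tilde\sigma_b)^2=\tau$, so the group is nonabelian with an involution outside the cyclic subgroup of order $4$, hence $D_8$ and not $Q_8$ (note that it is the nontriviality of the commutator, not merely the existence of the outside involution, that excludes $\Z/4\Z\times\Z/2\Z$ -- you do have this, but the final sentence should lean on it explicitly). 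What the paper's approach buys is brevity and uniformity with the $\U_4(\F_2)$ machinery used elsewhere (the same citation handles the degree count and the group identification in one stroke); what your approach buys is independence from MT3, making the lemma verifiable on the spot, at the cost of redoing the linear-independence and commutator computations that the citation encapsulates.
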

\begin{proof}
The extension $E/F$ is Galois with Galois group generated by $\sigma_a,\sigma_b$, where $\sigma_a$ and $\sigma_b$ are defined by the conditions: 
\[
\begin{aligned}
\sigma_a(\theta_a)=\theta_a+1, \sigma_a(\theta_b)=\theta_b,\\
\sigma_b(\theta_a)=\theta_a, \sigma_b(\theta_b)=\theta_b+1,\\
\end{aligned}
\]
Since ${\rm Tr}_{F(\theta_a)/F}(\delta)=b+\wp(d)$ for some $d\in F$, we have
\[
\sigma_a(\delta)= \delta+ b+\wp(d).
\]
Clearly we have 
\[
\sigma_b(\delta)=\delta.
\]

Then \cite[Proof of Proposition 4.1]{MT3} shows that $L=E(\theta_\delta)/F$ is Galois and its Galois group is isomorphic to $D_8$. Furthermore, we can choose an extension, still denoted $\sigma_a$ in ${\rm Gal}(L/F)$, of $\sigma_a$ such that $\sigma_a(\theta_\delta)=\theta_\delta+\theta_b+d$.
\end{proof}
\begin{thm}
\label{thm:D4 char 2}
  Let $F$  be a field of characteristic 2.
There is a natural one-to-one correspondence between the set of admissible pairs $(\{[a]_F,[b]_F\},W)$ and the set of Galois $D_8$ extensions $L/F$. 
\end{thm}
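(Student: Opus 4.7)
The plan is to mirror the proof of Theorem~\ref{thm:D4 char not 2} step by step, substituting Artin--Schreier theory for Kummer theory throughout. I will construct mutually inverse maps $\mu$ (from admissible pairs to extensions) and $\eta$ (from extensions to admissible pairs) and verify bijectivity.

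The map $\mu$ is given by the last lemma above: given an admissible pair $(\{[a]_F,[b]_F\},W)$, choose a generator $[\delta]_E$ of $W$ represented by $\delta\in F(\theta_a)$ with $[{\rm Tr}_{F(\theta_a)/F}(\delta)]_F=[b]_F$, and set $L=E(\theta_\delta)$. That lemma confirms $L/F$ is a Galois $D_8$-extension, and since $L$ depends only on the class $[\delta]_E\in E/\wp(E)$, the map $\mu$ is well-defined.

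For $\eta$, starting from a Galois $D_8$-extension $L/F$, I identify $D_8$ with $\U_3(\F_2)$, choose an isomorphism $\rho\colon\Gal(L/F)\to\U_3(\F_2)$, and set $\sigma_i=\rho^{-1}(E_{i,i+1})$ for $i=1,2$. The commutator subgroup $\Phi=\langle[\sigma_1,\sigma_2]\rangle$ is the center of $\Gal(L/F)$, and its fixed field $M$ is elementary abelian of degree 4 over $F$, hence $M=F(\theta_a,\theta_b)$ by Artin--Schreier theory, with $[a]_F,[b]_F\in F/\wp(F)$ dual to $\sigma_1|_M,\sigma_2|_M$ under the pairing $(\gamma,[x]_F)\mapsto\gamma(\theta_x)-\theta_x\in\F_2$. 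The same case analysis as in Theorem~\ref{thm:D4 char not 2} shows the unordered pair $\{[a]_F,[b]_F\}$ is independent of $\rho$. Using the Artin--Schreier analogue of the Kummer pairing
\[
\frac{F(\theta_a)\cap\wp(L)}{\wp(F(\theta_a))}\times\Gal(L/F(\theta_a))\to\F_2
\]
applied to $1\to\Gal(L/F(\theta_a))\to\Gal(L/F)\to\Gal(F(\theta_a)/F)\to 1$, I let $\delta\in F(\theta_a)$ represent the element dual to $[\sigma_1,\sigma_2]$ and set $W=\langle[\delta]_E\rangle$.

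The main technical step is verifying that $[{\rm Tr}_{F(\theta_a)/F}(\delta)]_F=[b]_F$, excluding the a priori possibility $[a+b]_F$. The argument transcribes the characteristic not 2 calculation: if ${\rm Tr}_{F(\theta_a)/F}(\delta)=a+b+\wp(f)$ for some $f\in F$, then $\sigma_1(\delta)=\delta+(a+b)+\wp(f)$, whence $\sigma_1(\theta_\delta)=\theta_\delta+\theta_a+\theta_b+f+\epsilon$ for some $\epsilon\in\F_2$; a direct computation then gives $\sigma_1^2(\theta_\delta)=\theta_\delta+1$, contradicting $\sigma_1^2=\id$. Once this is established, $(\{[a]_F,[b]_F\},W)$ is admissible and $\eta$ is well-defined. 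The identities $\mu\circ\eta=\id$ and $\eta\circ\mu=\id$ then follow as in Theorem~\ref{thm:U4 char not 2}: $L=M(\theta_\delta)$ by construction of $W$, and starting from an admissible pair, $\eta$ recovers the original data from the Galois-theoretic characterization of $M$, $[a]_F$, $[b]_F$, and $W$.
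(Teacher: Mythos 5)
Your proposal is correct and follows essentially the same route as the paper: transcribe the characteristic-not-2 argument with Artin--Schreier theory replacing Kummer theory, use the preceding lemma to build $L=E(\theta_\delta)$ from an admissible pair, and in the reverse direction choose $\rho$, take the fixed field of the commutator subgroup, run the same case analysis for independence of $\rho$, and exclude the possibility $[{\rm Tr}_{F(\theta_a)/F}(\delta)]_F=[a+b]_F$ via the computation $\sigma_1^2(\theta_\delta)=\theta_\delta+1$. Your explicit remarks that $\mu$ depends only on the class $[\delta]_E$ and that $\mu,\eta$ are mutually inverse only make explicit what the paper leaves implicit.
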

\begin{proof}
Let $(\{[b]_F, [a]_F\}, W)$ be admissible. Let $E=F(\sqrt{a},\sqrt{b})$. Let $L=E(\sqrt{W})$. Then $L/F$ is a Galois $D_8$-extension. (See \cite[Subsection 4.2]{MT3}.)

Now let $L/F$ be a Galois $D_8$-extension. We identify $D_8$ with $\U_3(\F_2)$. Let $\rho\colon {\rm Gal}(L/F)\to \U_3(\F_2)$ be any isomorphism. Set $\sigma_1=\rho^{-1}(E_{12})$, and $\sigma_2=\rho^{-1}(E_{23})$. Then the commutator subgroup $\Phi=[{\rm Gal}(L/F),{\rm Gal}(L/F)]$ is $\langle [\sigma_1,\sigma_2]\rangle.$

 Let $M$ be the fixed field of $\Phi$. Then $M/F$ is the an 2-elementary abelian extension of $F$, and ${\rm Gal}(M/F)$ is the internal direct sum 
\[
{\rm Gal}(M/F)=\langle \sigma_1|_M \rangle\oplus \langle \sigma_2|_M \rangle \simeq (\Z/2\Z)^2.
\]
Let $[a]_F,[b]_F$ be elements in $F/\wp(F)^2$ which is dual to $\sigma_1|_M, \sigma_2|_M$ respectively via  Artin-Schreier theory. Explicitly we require that
\[
\begin{aligned}
\sigma_1(\theta_{a})=\theta_{a}+1,\sigma_1(\theta_{b})=\theta_{b};\\
\sigma_2(\theta_{a})=\theta_{a},\sigma_2(\theta_{b})=-\theta_{b}.
\end{aligned}
\]
\noindent
{\bf Claim:} $\{[b]_F,[a]_F\}$ does not depend on the choice of $\rho$.  

\noindent
{\it Proof of Claim:} Suppose that $\rho^\prime\colon {\rm Gal}(L/F)\to \U_4(\F_2)$ is another isomorphism. We define $\sigma^\prime_1={\rho^\prime}^{-1}(E_{12})$, and $\sigma^\prime_2={\rho^\prime}^{-1}(E_{23})$. We need to show that $\{\sigma_1|_M,\sigma_2|_M\}=\{\sigma^\prime_1|_M,\sigma^\prime_2|_M\}$. We first note that $\Phi$ is the center of ${\rm Gal}(L/F)$.

Because $\sigma_2^\prime|_M$ is in ${\rm Gal}(M/F)=\langle \sigma_1|_M \rangle\oplus \langle \sigma_2|_M\rangle$, we have that modulo the subgroup $\Phi$, $\sigma_2^\prime$ is equal to one of the following elements $\sigma_1,\sigma_2$, or $\sigma_1\sigma_2$. 

If $\sigma_2^\prime=\sigma_1\sigma_2$ modulo $\Phi$, then ${\sigma_2^\prime}^2=(\sigma_1\sigma_2)^2\not=1$, a contradiction. Similarly $\sigma_1^\prime$ cannot be $\sigma_1\sigma_2$ modulo $\Phi$.

{\bf Case 1:} $\sigma_2^\prime=\sigma_1$ modulo $\Phi$. In this case $\sigma_1^\prime$ cannot be $\sigma_1$ modulo $\Phi$. Otherwise it would lead to a contradiction that 
$1\not= [\sigma^\prime_1,\sigma^\prime_2]=[\sigma_1,\sigma_1]=1$. Hence $\sigma^\prime_1=\sigma_2$ modulo $\Phi$. 

{\bf Case 2:} $\sigma_2^\prime=\sigma_2$ modulo $\Phi$. In this case $\sigma_1^\prime$ cannot be $\sigma_2$ modulo $\Phi$. Otherwise it would lead to a contradiction that 
$1\not= [\sigma^\prime_1,\sigma^\prime_2]=[\sigma_2,\sigma_2]=1$. Hence $\sigma^\prime_1=\sigma_1$ modulo $\Phi$. 

In both cases we have $\{\sigma_1|_M,\sigma_2|_M\}=\{\sigma^\prime_1|_M,\sigma^\prime_2|_M\}$, as desired.
\\
\\  
We have an exact sequence 
\[
1 \to {\rm Gal}(L/F(\theta_{a}))  \to {\rm Gal}(L/F) \to {\rm Gal}(F(\theta_{a})/F) \to 1.
\]
Then ${\rm Gal}(L/F(\theta_{a})$ is an $\F_2[{\rm Gal}(F(\theta_{a})/F)]$-module where the action is by conjugation. We also have the ${\rm Gal}(F(\theta_{a})/F)$-equivariant Artin-Schreier pairing 
\[
\frac{F(\theta_{a})\cap \wp(L)}{\wp(F(\theta_{a}))}\times {\rm Gal}(L/F(\theta_{a})) \to \F_2.
\]
As an $\F_2$-vector space ${\rm Gal}(L/F(\theta_{a}))$ has a basis consisting of $\sigma_2$, $[\sigma_1,\sigma_2]$. 
Let $\delta$ be the element dual to $[\sigma_1,\sigma_2]$. 
Then ${\rm Tr}_{F(\theta_{a})/F}(\delta)\equiv b \bmod (\wp(F(\theta_{a}))$. Hence ${\rm Nm}_{F(\theta_{a})/F}(\delta)$ is in $b+\wp(F) \cup b+a+\wp(F)$. 

Suppose that ${\rm Tr}_{F(\theta_{a})/F}(\delta) = b+a+\wp(f)$,  for some $f\in F$. Then $\sigma_1(\delta)=\delta + b +a +\wp(f)$. Thus
\[
\sigma_1(\theta_{\delta})= \theta_{\delta}+ \theta{b}+ \theta_a + f +i, 
\]
for some $i\in \{0,1\}$.
Hence
\[
\begin{aligned}
\sigma_1^2(\theta_{\delta}) &= \sigma_1(\theta_{\delta})+ \sigma_1(\theta_{b})+\sigma_1(\theta_a)+ f+i\\
&= \theta_{\delta}+ \theta_b+\theta+a+ f+i \theta_{b}+ \theta_a +1+ f+i\\
&= \theta_{\delta}+1.
\end{aligned}
\]
This implies that $\sigma_1$ is not of order 2, a contradiction. Hence we have $[{\rm Tr}_{E/F}(\delta)]_F=[b]_F$. Let $W$ be the one dimensional $\F_2$-subspace of $M^\times/(M^\times)^2$ generated by $[\delta]_M$. Then $W$ is compatible with $\{[a]_F,[b]_F\}$. Also since $L= M(\theta_{W})$, we see that $W$ does not depend on the choice of $\rho$.
\end{proof}

\section{Description of $\U_4(\F_2)$-extensions: The case of characteristic 2} 

Let $F$ be a field of characteristic  2. 
\begin{defn}
A pair $([b]_F, V)$ where $b$ is in $F$ and $V\subseteq F/\wp(F)$ is {\it admissible} if $\dim_{\F_2}(V)=2$ and $\dim_{\F_2}(\langle V,[b]_F\rangle)=3$.
\end{defn}

\begin{lem} Assume that $([b]_F,V)$ is admissible. Let $E=F(\wp^{-1}(V))$.  Then there exists $\delta\in E$ such that $[{\rm Tr}_{E/F}(\delta)]_F=[b]_F$. 
\end{lem}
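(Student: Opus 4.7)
The plan is to exploit the fact that in characteristic $2$ the analogue of the quaternion-algebra obstruction $(b,v)=0$ simply disappears, because the trace map on an Artin--Schreier extension is always surjective. Unlike the characteristic not $2$ setting, where one needs $\delta$ whose norm is $b$ up to a square and this forces $b$ to be a norm from $F(\sqrt{a})$ and from $F(\sqrt{c})$, here the admissibility condition only requires the dimensions of $V$ and $\langle V,[b]_F\rangle$, and nothing else is needed about the interaction of $b$ with elements of $V$.

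So the first and essentially only step is to recall that $E=F(\wp^{-1}(V))$ is a finite separable extension of $F$ (in fact a $(\Z/2\Z)^{2}$-extension of $F$, built out of two linearly disjoint Artin--Schreier extensions). For a finite separable extension the trace map $\mathrm{Tr}_{E/F}\colon E\to F$ is surjective. Hence one may simply pick $\delta\in E$ with $\mathrm{Tr}_{E/F}(\delta)=b$, and a fortiori $[\mathrm{Tr}_{E/F}(\delta)]_F=[b]_F$ in $F/\wp(F)$.

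The only thing worth a sentence of verification is that $E/F$ is separable, which one can see either from the fact that the minimal polynomial of a generator $\theta_v$ (a root of $X^2-X-v$) has nonzero derivative, or simply by appealing to the general theory of Artin--Schreier extensions, where $E/F$ is Galois with group $V^{\vee}\simeq(\Z/2\Z)^2$. Once separability is in hand, surjectivity of the trace follows for example from the nondegeneracy of the trace form, or from the classical fact that $\mathrm{Tr}_{E/F}$ is a nonzero $F$-linear map onto $F$.

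There is no real obstacle in this lemma; the content of the statement is that the characteristic $2$ side of the theory is simpler than the characteristic not $2$ side, because one is working with a trace rather than a norm and the trace map is automatically surjective. The admissibility hypothesis on $([b]_F,V)$ is used only implicitly (to ensure that $E$ has the expected degree $4$ over $F$), but is not invoked in choosing $\delta$.
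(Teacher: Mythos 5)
Your proof is correct and is essentially the paper's own argument: the paper simply observes that $\mathrm{Tr}_{E/F}$ is surjective (since $E/F$ is finite separable) and picks $\delta$ with $\mathrm{Tr}_{E/F}(\delta)=b$. Your extra remarks on separability and on why no norm-type obstruction arises are accurate but not needed beyond that one observation.
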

\begin{proof} It is clear since we know that the trace map ${\rm Tr}_{E/F}$ is surjective.
\end{proof}
\begin{defn} 
Assume that $([b]_F,V)$ is admissible. Let $E=F(\wp^{-1}(V))$. Then a triple $([b]_F, V, W)$ where $W$ is a free  $\F_2[{\rm Gal}(E/F)]$-submodule of $E/\wp(E)$, is {\it admissible} if $W$ is generated by an element $[\delta]_E$ with $[{\rm Tr}_{E/F}(\delta)]_F =[b]_F$.
\end{defn}

\begin{lem}
Let $([b]_F, V, W)$ be an admissible triple. Assume that $V=\langle [a]_F,[c]_F\rangle$. Let $E=F(\wp^{-1}(V))$. Assume that $W$ is generated by $[\delta]_E$ as a free $\F_2[{\rm Gal}(E/F)]$-module with $[{\rm Tr}_{E/F}(\delta)]_F =[b]_F$. Let $A= {\rm Tr}_{E/F(\theta_a)}(\delta)$ and $C={\rm Tr}_{E/F(\theta_c)}(\delta)$. Then every generator of $W$ as a free $\F_2[{\rm Gal}(E/F)]$-module is of the form 
\[[\delta^\prime]_E= [\delta]_E+ \epsilon_A[A]_E + {\epsilon_C}[C]_E +{\epsilon_b} [b]_E,\]
where $\epsilon_A,\epsilon_C,\epsilon_b\in \{0,1\}$.

Furthermore for any generator $[\delta^\prime]_E$ of $W$ as a free $\F_2[{\rm Gal}(E/F)]$-module, we have $[{\rm Tr}_{E/F}(\delta^\prime)]_F=[b]_F$. In particular, this implies that the pair $(V,W)$ uniquely determines  $[b]_F$. 
\end{lem}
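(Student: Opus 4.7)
The plan is to mirror the proof of Lemma~\ref{lem:generator}, translating Kummer multiplicativity into Artin-Schreier additivity. Set $G = {\rm Gal}(E/F) \cong (\Z/2\Z)^2$ with generators $\sigma_a, \sigma_c$ dual to $a, c$. One checks directly that $(1+\sigma_c)[\delta]_E = [A]_E$, $(1+\sigma_a)[\delta]_E = [C]_E$ and $N\cdot [\delta]_E = [{\rm Tr}_{E/F}(\delta)]_E = [b]_E$, where $N = (1+\sigma_a)(1+\sigma_c) = \sum_{\sigma\in G}\sigma$. Since $\{1,\, 1+\sigma_a,\, 1+\sigma_c,\, N\}$ is an $\F_2$-basis of $\F_2[G]$, the freeness of $W$ yields that $[\delta]_E, [A]_E, [C]_E, [b]_E$ is an $\F_2$-basis of $W$, so every element of $W$ has a unique expansion
\[
[\delta']_E = \epsilon_\delta[\delta]_E + \epsilon_A[A]_E + \epsilon_C[C]_E + \epsilon_b[b]_E
\]
with coefficients in $\F_2$.

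To force $\epsilon_\delta = 1$ for any generator $[\delta']_E$, I would use the characteristic-$2$ identity $(1+\sigma)^2 = 0$ for each involution $\sigma$, which gives $N(1+\sigma_a) = N(1+\sigma_c) = N^2 = 0$. Hence $N$ annihilates $[A]_E$, $[C]_E$, and $[b]_E$. If $\epsilon_\delta = 0$, then $N[\delta']_E = 0$, so $N$ lies in the annihilator ideal of $[\delta']_E$ in $\F_2[G]$. By Lemma~\ref{lem:modular group ring} this non-zero annihilator forces the cyclic $\F_2[G]$-module $\F_2[G]\cdot[\delta']_E$ to be non-free, contradicting the assumption that $[\delta']_E$ generates $W$ freely. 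Equivalently, $N$ sends the generator $[\delta]_E$ to $[b]_E \neq 0$ in $E/\wp(E)$, so $N$ must act non-trivially on $W$ and cannot kill a generator.

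For the furthermore-clause, lift the expansion $[\delta']_E = [\delta]_E + \epsilon_A[A]_E + \epsilon_C[C]_E + \epsilon_b[b]_E$ to an equality
\[
\delta' = \delta + \epsilon_A A + \epsilon_C C + \epsilon_b b + \wp(\gamma)
\]
in $E$, for some $\gamma \in E$. Applying ${\rm Tr}_{E/F}$ term by term: since $A \in F(\theta_a)$ is fixed by the index-two subgroup $\langle\sigma_c\rangle$, one has ${\rm Tr}_{E/F}(A) = 2\,{\rm Tr}_{F(\theta_a)/F}(A) = 0$ in characteristic $2$; likewise ${\rm Tr}_{E/F}(C) = 0$ and ${\rm Tr}_{E/F}(b) = 4b = 0$; finally ${\rm Tr}_{E/F}(\wp(\gamma)) = \wp({\rm Tr}_{E/F}(\gamma)) \in \wp(F)$. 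Combining, ${\rm Tr}_{E/F}(\delta') \equiv {\rm Tr}_{E/F}(\delta) \equiv b \pmod{\wp(F)}$, giving $[{\rm Tr}_{E/F}(\delta')]_F = [b]_F$. Uniqueness of $[b]_F$ from the pair $(V,W)$ is then immediate.

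The only genuinely new input beyond an additive rewrite of Lemma~\ref{lem:generator} -- and the main potential obstacle -- is verifying $[b]_E \neq 0$ in $E/\wp(E)$. This is handled by Artin-Schreier theory: if $[b]_E = 0$ then $\theta_b \in E$, forcing $F(\theta_b) \subseteq E = F(\wp^{-1}(V))$ and hence $[b]_F \in V$, contradicting the admissibility assumption $\dim_{\F_2}\langle V,[b]_F\rangle = 3$. Once this is in hand, the proof is essentially mechanical.
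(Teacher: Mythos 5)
Your proof is correct and takes essentially the same approach as the paper's: identify $[\delta]_E,[A]_E,[C]_E,[b]_E$ as an $\F_2$-basis of $W$ coming from the basis $1,\,1+\sigma_a,\,1+\sigma_c,\,N$ of $\F_2[G]$, rule out $\epsilon_\delta=0$ because the norm element $N=\sum_{\sigma\in G}\sigma$ kills the other basis vectors while $N[\delta]_E=[b]_E\neq 0$, and get the furthermore-clause from the trace computation. You simply make explicit two points the paper leaves implicit (that $[b]_E\neq 0$ in $E/\wp(E)$ because $[b]_F\notin V$, and the term-by-term trace calculation), while the paper's proof also records the converse — that every element of the stated form is again a free generator — which is not part of the statement itself but is used later in the counting argument.
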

\begin{proof} Let $G={\rm Gal}(E/F)$. 
As an $\F_2$-vector space, $W$ is generated  by $[\delta]_E, [A]_E, [C]_E, [b]_E$.
Let $[\delta^\prime]_E$ be an arbitrary generator of the free $\F_2[G]$-module. Then 
\[[\delta^\prime]_E= {\epsilon_\delta}[\delta]_E + {\epsilon_A}[A]_E + {\epsilon_C}[C]_E +  {\epsilon_b}[b]_E,\]
for some $\epsilon_{\delta}, \epsilon_A,\epsilon_C,\epsilon_b\in \{0,1\}$.
Suppose that $\epsilon_{\delta}=0$, then we see that $(\sum_{\sigma\in G}\sigma)([\delta^\prime]_E)$ is trivial in $E/\wp((E))$, a contradiction. Hence $\epsilon_{\delta}=1$.

Conversely, assume that  $[\delta^\prime]_E=[\delta]_E + {\epsilon_A}[A]_E+ {\epsilon_C}[C]_E+ {\epsilon_b}[b]_E,$ for some $\epsilon_A,\epsilon_C,\epsilon_b\in \{0,1\}$. Let $W^\prime$ be the $\F_2[G]$-module generated by $[\delta^\prime]_E$. Then we have $W^\prime\subseteq W$. It is then enough to show that $W^\prime$ is a free $\F_2[G]$-module. Suppose that $W^\prime$ would not be free. Then there would exist a non-zero ideal $I\subseteq \F_2[G]$ such that $I$ would annihilate $\delta^\prime$. But it is known that any non-zero ideal of $\F_2[G]$ contains the element $\sum_{\sigma\in G}\sigma=:N$. Therefore $N$ would annihilate $[\delta^\prime]_E$. This contradicts to the fact that
\[
N([\delta^\prime]_E)]=[{\rm Tr}_{E/F}(\delta^\prime)]_E= [b]_E \not=0\in E/\wp(E).
\qedhere
\] 
\end{proof}

\begin{prop}
\label{prop:admissible triple char 2}
 Let  $([b]_F, V, W)$ be an admissible triple. Let $E=F(\wp^{-1}{V})$. Let $L=E(\wp^{-1}{W})$. Then  $L/F$ is a Galois $\U_4(\F_2)$-extension.
\end{prop}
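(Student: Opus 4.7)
My plan is to mirror the proof of Proposition~\ref{prop:admissible triple} almost verbatim, trading Kummer theory for Artin--Schreier theory and the multiplicative cocycle relations for their additive analogs. In place of the characteristic not $2$ criterion from \cite[Section 3]{MT3}, I would invoke the characteristic $2$ counterpart provided in \cite[Section 4]{MT3}.

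First I would write $V = \langle [a]_F, [c]_F\rangle$ and fix a generator $[\delta]_E$ of $W$ as a free $\F_2[\Gal(E/F)]$-module, with $\Tr_{E/F}(\delta) = b + \wp(d)$ for some $d \in F$. Setting $A = \Tr_{E/F(\theta_a)}(\delta) = \delta + \sigma_c(\delta)$ and $C = \Tr_{E/F(\theta_c)}(\delta) = \delta + \sigma_a(\delta)$, I would observe that $M := F(\theta_a,\theta_b,\theta_c)$ is an elementary abelian extension of $F$ of degree $8$ (by the admissibility assumption $\dim_{\F_2}\langle V,[b]_F\rangle = 3$), with Galois group generated by $\sigma_a,\sigma_b,\sigma_c$ satisfying $\sigma_x(\theta_x) = \theta_x + 1$ and $\sigma_x(\theta_y) = \theta_y$ for $x \neq y$.

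Next I would verify the additive cocycle identities
\[
\sigma_c(\delta) = \delta + A,\qquad \sigma_a(\delta) = \delta + C,\qquad A + C = \sigma_a(\delta) + \sigma_c(\delta),
\]
\[
\sigma_a(A) = A + b + \wp(d),\qquad \sigma_c(C) = C + b + \wp(d),
\]
the first line being immediate from the definitions of $A$ and $C$ (using $2\delta = 0$), and the second line following from the transitivity of trace: $A \in F(\theta_a)$ satisfies $\Tr_{F(\theta_a)/F}(A) = \Tr_{E/F}(\delta) \equiv b \pmod{\wp(F)}$, and symmetrically for $C$.

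These identities are exactly the input required by \cite[Section 4]{MT3}, which then yields that $L = E(\wp^{-1}(W))/F$ is a Galois $\U_4(\F_2)$-extension together with an explicit isomorphism
\[
\rho \colon \Gal(L/F) \to \U_4(\F_2),\qquad \tilde{\sigma}_a \mapsto E_{12},\quad \tilde{\sigma}_b \mapsto E_{23},\quad \tilde{\sigma}_c \mapsto E_{34},
\]
for suitable lifts $\tilde\sigma_a,\tilde\sigma_b,\tilde\sigma_c \in \Gal(L/F)$ of $\sigma_a,\sigma_b,\sigma_c$. The degree count $[L:F] = [L:E][E:F] = 16 \cdot 4 = 64 = |\U_4(\F_2)|$, with $[L:E] = 16$ guaranteed by the freeness of $W$ as an $\F_2[\Gal(E/F)]$-module, is the sanity check that no collapse occurs. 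The main technical obstacle I anticipate is not any individual calculation but the bookkeeping required to select the lifts $\tilde\sigma_x$ consistently so that their commutators agree with the matrix commutators in $\U_4(\F_2)$; once the identities above are in hand, however, this should amount to essentially a translation of the characteristic zero argument into additive language.
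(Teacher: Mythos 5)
Your proposal is correct and follows essentially the same route as the paper: one writes $V=\langle [a]_F,[c]_F\rangle$, takes a free generator $[\delta]_E$ with $\Tr_{E/F}(\delta)=b+\wp(d)$, verifies exactly the additive identities $\sigma_c(\delta)=\delta+A$, $\sigma_a(\delta)=\delta+C$, $\sigma_a(A)=A+b+\wp(d)$, $\sigma_c(C)=C+b+\wp(d)$, and then cites the characteristic-$2$ construction in \cite[Proof of Theorem 4.2]{MT3} to conclude that $L/F$ is Galois with group $\U_4(\F_2)$ via lifts of $\sigma_a,\sigma_b,\sigma_c$ mapping to $E_{12},E_{23},E_{34}$. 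The paper's proof is the same argument, so no further comparison is needed.
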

\begin{proof}
Suppose that $V=\langle [a]_F,[c]_F\rangle$ and that $W$ is generated by $\delta$ with ${\rm Tr}_{E/F}(\delta)=b+\wp(d)$, for some $d\in F$. Let $A= {\rm Tr}_{E/F(\theta_{a})}(\delta)$ and $C={\rm Tr}_{E/F(\theta_{c})}(\delta)$. 
We first note that $F(\theta_{a},\theta_{b},\theta_{c})/F$ is an abelian 2-elementary extension whose Galois group is generated by $\sigma_a,\sigma_b,\sigma_c$, where
\[
\begin{aligned}
\sigma_a(\theta_{a})&= \theta_{a}+1, \sigma_a(\theta_{b})=\theta_{b}, \sigma_a(\theta_{c})=\theta_{c};\\
\sigma_b(\theta_{a})&=\theta_{a}, \sigma_b(\theta_{b})= \theta_{b}+1, \sigma_b(\theta_{c})= \theta_{c};\\
\sigma_c(\theta_{a})&=\theta_{a}, \sigma_c(\theta_{b})=\theta_{b}, \sigma_c(\theta_{c})=\theta_{c}+1.
\end{aligned}
\]

Clearly we have
\[
\begin{aligned}
\sigma_c(\delta)&=\delta + A,\\
\sigma_a(\delta)&=\delta + C , \\
\sigma_a(A) &= A +b +\wp(d),\\
\sigma_c(C) &= C+ b+ \wp{d}.\\
\end{aligned}
\]
Then \cite[Proof of Theorem 4.2]{MT3} shows that $L/F$ is a Galois $\U_4(F_p)$-extension. Moreover an explicit isomorphism $\rho\colon {\rm Gal}(L/F)\to \U_4(\F_2)$ is given by
\[
\sigma_a \mapsto E_{12}, \; \;
\sigma_b\mapsto E_{23}, \;\;
 \sigma_c\mapsto E_{34},
\]
for suitable extensions  $\sigma_a,\sigma_b,\sigma_c\in {\rm Gal}(L/F)$ of $\sigma_a,\sigma_b,\sigma_c$.
\end{proof}

\begin{prop}
\label{prop:Galois extension char 2}
There is a natural way to associate an admissible triple $([b]_F,V,W)$ to any given Galois $\U_4(\F_2)$-extension $L/F$.
\end{prop}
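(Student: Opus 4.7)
The plan is to mimic the proof of Proposition~\ref{prop:Galois extension} essentially verbatim, with Artin--Schreier theory replacing Kummer theory and the trace replacing the norm. First, given a Galois $\U_4(\F_2)$-extension $L/F$, fix any isomorphism $\rho\colon {\rm Gal}(L/F)\to \U_4(\F_2)$ and set $\sigma_i=\rho^{-1}(E_{i,i+1})$ for $i=1,2,3$. The commutator subgroup $\Phi=[{\rm Gal}(L/F),{\rm Gal}(L/F)]$ is the internal direct sum $\langle [\sigma_1,\sigma_2]\rangle\oplus\langle[\sigma_2,\sigma_3]\rangle\oplus\langle[[\sigma_1,\sigma_2],\sigma_3]\rangle$, and the fixed field $M$ of $\Phi$ is a $(\Z/2\Z)^3$-extension of $F$. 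Applying Artin--Schreier theory, I extract $[a]_F,[b]_F,[c]_F\in F/\wp(F)$ dual to $\sigma_1|_M,\sigma_2|_M,\sigma_3|_M$, set $V=\langle [a]_F,[c]_F\rangle$, and let $E=F(\theta_a,\theta_c)=F(\wp^{-1}(V))$.

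Next I would prove the analog of the independence claim: $E$ is independent of $\rho$. If $\rho'$ is another isomorphism and $\sigma_2'={\rho'}^{-1}(E_{23})$, then $\sigma_2'$ modulo $\Phi$ is one of the seven nontrivial elements of ${\rm Gal}(M/F)$; but the identities $[[\sigma_2,\sigma_3],\sigma_1]\neq 1$ and $[[\sigma_1,\sigma_2],\sigma_3]\neq 1$ together with the fact that $\sigma_2'$ commutes with all of $\Phi$ force $\sigma_2'\equiv \sigma_2 \bmod \Phi$. This gives $H=H'$ and hence $E=L^H$ is intrinsic; the same conclusion then pins down the unordered set $\{[a]_F,[c]_F\}$, hence $V$.

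Now use the $G$-equivariant Artin--Schreier pairing
\[
\frac{E\cap \wp(L)}{\wp(E)}\times {\rm Gal}(L/E)\to \F_2,
\]
with $G={\rm Gal}(E/F)$ acting by conjugation on ${\rm Gal}(L/E)$. Pick $\delta\in E$ dual to $[[\sigma_1,\sigma_2],\sigma_3]$. Since the dual of $\sigma_1|_M\in {\rm Gal}(M/F)\cap {\rm Gal}(L/E)$ in the inflated pairing is $[b]_F$, we obtain $[{\rm Tr}_{E/F}(\delta)]_F\equiv b \bmod \wp(F)+a\F_2+c\F_2$, i.e.\ ${\rm Tr}_{E/F}(\delta)\in b+\wp(F)\cup b+a+\wp(F)\cup b+c+\wp(F)\cup b+a+c+\wp(F)$. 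To rule out the three extra cosets I will repeat the order-$2$ argument of the characteristic-$2$ $D_8$ case: if $A={\rm Tr}_{E/F(\theta_a)}(\delta)$ satisfies ${\rm Tr}_{F(\theta_a)/F}(A)=b+a+\wp(f)$ for some $f\in F$, then $\sigma_1(A)=A+b+a+\wp(f)$, so $\sigma_1(\theta_A)=\theta_A+\theta_b+\theta_a+f+i$ for some $i\in\{0,1\}$, and a direct computation yields $\sigma_1^2(\theta_A)=\theta_A+1$, contradicting $\sigma_1^2={\rm id}$. The same argument, with the roles of $a,c$ interchanged or combined, excludes the remaining two cosets. Thus $[{\rm Tr}_{E/F}(\delta)]_F=[b]_F$, and the admissibility condition on $([b]_F,V)$ holds trivially in characteristic $2$ since the quaternion compatibility required in the odd characteristic case drops out.

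Finally, let $W$ be the $\F_2[G]$-submodule of $E/\wp(E)$ dual to ${\rm Gal}(L/E)$ under the pairing above. Since the dimension count (as an $\F_2$-vector space) of ${\rm Gal}(L/E)$ is $4$ and its $\F_2[G]$-structure is the regular representation (the basis $\sigma_2,[\sigma_1,\sigma_2],[\sigma_2,\sigma_3],[[\sigma_1,\sigma_2],\sigma_3]$ is a $G$-orbit of $\sigma_2$ under conjugation), $W$ is a free cyclic $\F_2[G]$-module generated by $[\delta]_E$. Combined with $[{\rm Tr}_{E/F}(\delta)]_F=[b]_F$ this shows $([b]_F,V,W)$ is an admissible triple. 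The preceding lemma shows $[b]_F$ is determined by $(V,W)$, so $[b]_F$ is independent of $\rho$, and $W$ is independent of $\rho$ because $L=E(\wp^{-1}(W))$. The main obstacle is the coset-elimination step for $[{\rm Tr}_{E/F}(\delta)]_F$; once the order-$2$ calculation is carried out carefully in the additive setting the rest is a bookkeeping translation of the characteristic-not-$2$ proof.
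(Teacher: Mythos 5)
Your proposal is correct and takes essentially the same route as the paper's own proof, which is likewise a direct Artin--Schreier/trace translation of Proposition~\ref{prop:Galois extension}: the same choice of $\sigma_1,\sigma_2,\sigma_3$, the same independence-of-$\rho$ claim via the commutator argument, the same elimination of the extra cosets by the order-two computation with $\theta_A$, and the same definition of $W$ as the Artin--Schreier dual of ${\rm Gal}(L/E)$. Two cosmetic slips only: the listed basis $\sigma_2,[\sigma_1,\sigma_2],[\sigma_2,\sigma_3],[[\sigma_1,\sigma_2],\sigma_3]$ is not literally the $G$-orbit of $\sigma_2$ (though that orbit is indeed a basis, so the regular-representation/freeness conclusion stands), and it is $\sigma_2|_M$, not $\sigma_1|_M$, that is dual to $[b]_F$.
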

\begin{proof}
 Let $\rho\colon {\rm Gal}(L/F)\to \U_4(\F_2)$ be any isomorphism. Set $\sigma_1=\rho^{-1}(E_{12})$, $\sigma_2=\rho^{-1}(E_{23})$, and $\sigma_3=\rho^{-1}(E_{34})$. Then the commutator subgroup $\Phi=[{\rm Gal}(L/F),{\rm Gal}(L/F)]$ is the internal direct sum 
\[
\Phi=\langle [\sigma_1,\sigma_2]\rangle \oplus \langle [\sigma_2,\sigma_3]\rangle \oplus \langle [[\sigma_1,\sigma_2],\sigma_3]\rangle \simeq (\Z/2\Z)^3.
\]

 Let $M$ be the fixed field of $\Phi$. Then $M/F$ is an abelian 2-elementary extension of $F$, and ${\rm Gal}(M/F)$ is the internal direct sum 
\[
{\rm Gal}(M/F)=\langle \sigma_1|_M \rangle\oplus \langle \sigma_2|_M\rangle \oplus \langle \sigma_3|_M\rangle \simeq (\Z/2\Z)^3.
\]
Let $[a]_F,[b]_F,[c]_F$ be elements in $F/\wp(F)$ which is dual to $\sigma_1|_M, \sigma_2|_M, \sigma_3|_M$ respectively via  Artin-Schreier theory. Explicitly we require that
\[
\begin{aligned}
\sigma_1(\theta_a)=\theta_{a}+1,\sigma_1(\theta_{b})=\theta_{b}, \sigma_1(\theta_{c})=\theta_{c};\\
\sigma_2(\theta_{a})=\theta_{a},\sigma_2(\theta_{b})=\theta_{b}+1, \sigma_2(\theta_{c})=\theta_{c};\\
\sigma_3(\theta_{a})=\theta_{a},\sigma_3(\theta_{b})=\theta_{b}, \sigma_3(\theta_{c})=\theta_{c}+1. 
\end{aligned}
\]
Let $E=F(\theta_{a},\theta_{c})$. Then $E$ is fixed under $\sigma_2$, $[\sigma_1,\sigma_2]$, $[\sigma_2,\sigma_3]$ and $[[\sigma_1,\sigma_2],\sigma_3]$. 
 Hence $E$ is fixed under a subgroup $H$ of ${\rm Gal(L/F)}$ which is generated by $\sigma_2$, $[\sigma_1,\sigma_2]$, $[\sigma_2,\sigma_3]$ and $[[\sigma_1,\sigma_2],\sigma_3]$. We have $[L^H:F]=|{\rm Gal}(L/F)|/|H|=4$,  and $[E:F]=4$. Therefore $E=L^H$.
\\
\\ 
{\bf Claim:} $E$ does not depend on the choice of $\rho$.  

\noindent
{\it Proof of Claim:} Suppose that $\rho^\prime\colon {\rm Gal}(L/F)\to \U_4(\F_2)$ is another isomorphism. We define $\sigma^\prime_1={\rho^\prime}^{-1}(E_{12})$, $\sigma^\prime_2={\rho^\prime}^{-1}(E_{23})$, and $\sigma^\prime_3={\rho^\prime}^{-1}(E_{34})$. Let $H^\prime$ be the group generated by $\sigma_2^\prime$, $[\sigma_1^\prime,\sigma_2^\prime]$, $[\sigma_2^\prime,\sigma_3^\prime]$ and $[[\sigma_1^\prime,\sigma_2^\prime],\sigma_3^\prime]$. We need to show that $H=H^\prime$. We first note that $\sigma_2$ and $\sigma_2^\prime$ commute with every element in $\Phi$.

Clearly $\sigma_2^\prime|_M$ is in ${\rm Gal}(M/F)=\langle \sigma_1|_M \rangle\oplus \langle \sigma_2|_M\rangle \oplus \langle \sigma_3|_M\rangle$. 

Hence modulo the subgroup $\Phi$, $\sigma_2^\prime$ is equal to one of the following elements $\sigma_1,\sigma_2,\sigma_3$, $\sigma_1\sigma_2,\sigma_1\sigma_3,\sigma_2\sigma_3$, $\sigma_1\sigma_2\sigma_3$.

If $\sigma_2^\prime=\sigma_1,$ or $\sigma_1\sigma_2$, or $\sigma_1\sigma_3$, or $\sigma_1\sigma_2\sigma_3$ modulo $\Phi$, then 
\[
[[\sigma_2,\sigma_3],\sigma_2^\prime]=[[\sigma_2,\sigma_3],\sigma_1],
\] 
which is impossible since $[[\sigma_2,\sigma_3],\sigma_1]$ is nontrivial but $[[\sigma_2,\sigma_3],\sigma^\prime_2]$ is trivial.

If $\sigma_2^\prime=\sigma_3,$ or $\sigma_2\sigma_3$ modulo $\Phi$, then 
\[
[[\sigma_1,\sigma_2],\sigma_2^\prime]=[[\sigma_1,\sigma_2],\sigma_3],
\] 
which is impossible since $[[\sigma_1,\sigma_2],\sigma_3]$ is nontrivial but $[[\sigma_1,\sigma_2],\sigma^\prime_2]$ is trivial.

From the above discussion we see that $\sigma^\prime_2\equiv \sigma_2\bmod \Phi$. This implies that $[b]_F$ does not depend on the choice of $\rho$ 
and that $H^\prime=H$. Thus $E$ does not depend on the choice of $\rho$ also.
\\
\\  
We have an exact sequence 
\[
1 \to {\rm Gal}(L/E)  \to {\rm Gal}(L/F) \to {\rm Gal}(E/F)=G \to 1.
\]
Then ${\rm Gal}(L/E)$ is an $\F_2[G]$ module where the action is by conjugation. We also have the $G$-equivariant Artin-Schreier pairing 
\[
\frac{E\cap \wp(L  )}{\wp(E)}\times {\rm Gal}(L/E) \to \F_2.
\]
As an $\F_2$-vector space, ${\rm Gal}(L/E)$ has a basis consisting of $\sigma_2$, $[\sigma_1,\sigma_2]$, $[\sigma_2,\sigma_3]$ and $[[\sigma_1,\sigma_2],\sigma_3]$. 
Let $[\delta]_E$ be an element dual to $[[\sigma_1,\sigma_2],\sigma_3]$. 
Then ${\rm Tr}_{E/F}(\delta)\equiv b \bmod \wp(E)$. Hence ${\rm Tr}_{E/F}(\delta)$ is in $(b+\wp(F)) \cup (b+a+\wp(F))\cup (b+c+\wp(F)) \cup (b+a+c+\wp(F)^2)$. 

Let $A={\rm Tr}_{E/F(\theta_{a})}(\delta)$ and $C={\rm Tr}_{E/F(\theta_{c})}(\delta)$. 
Suppose that ${\rm Tr}_{E/F}(\delta)\equiv b+a \mod \wp(F)$. Then ${\rm Tr}_{F(\theta_{a})/F}(A)= b+a+ \wp(f)$ for some $f\in F$. Hence $\sigma_1(A) = A+b+a+\wp(f)$. Thus 
\[
\sigma_1(\theta_{A})= \theta_{A}+ \theta_b+\theta_a +f +i, 
\]
for some $i\in \{0,1\}$.
Therefore
\[
\begin{aligned}
\sigma_1^2(\theta_{A}) &= \sigma_1(\theta_{A})+ \sigma_1(\theta_{b})+\sigma_1(\theta_a)+ f +i\\
&= \theta_{A}+ \theta_b+ \theta_{a} +f +i + \theta_b+ \theta_a+1 + f+i\\
&= \theta_{A}+1.
\end{aligned}
\]
This implies that $\sigma_1$ is not of order 2, a contradiction. Hence we have ${\rm Nm}_{E/F}(\delta)$ is not in  $b+a+\wp(F)$.

Similarly we can show that ${\rm Nm}_{E/F}(\delta)$ is not in $(b+c+ \wp(F)) \cup (b+a+\wp(F)$. Therefore 
\[
{\rm Tr}_{E/F}(\delta)\equiv b \bmod \wp(F).
\]

 We set $V=\langle [a]_F,[c]_F\rangle$. Then $V$ does not depend on the choice of $\rho$, and the pair $([b]_F,V)$ is admissible.
Let $W$ be the $\F_2[G]$-submodule of $E/\wp(E$ which is dual via  Artin-Schreier theory to ${\rm Gal}(L/E)$. Then $W$  does not depend on the choice of $\rho$, and $W$ is free and generated by $\delta$. Since $[{\rm Tr}_{E/F}(\delta)]_F=[b]_F$, we see that the triple $([b]_F,V,W)$ is admissible. Since $[b]_F$ is uniquely determined by $(V,W)$, we see that $[b]_F$ does not depend on the choice of $\rho$.

\end{proof}

\begin{thm}
\label{thm:U4 char 2}
 Let $F$  be a field of characteristic 2.
There is a natural one-to-one correspondence between the set of admissible triples $([b]_F,V,W)$ and the set of Galois $\U_4(\F_2)$ extensions $L/F$. 
\end{thm}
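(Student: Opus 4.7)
The plan is to mirror the proof of Theorem~\ref{thm:U4 char not 2} word for word, now taking advantage of the two characteristic-2 ingredients already established above, namely Proposition~\ref{prop:admissible triple char 2} and Proposition~\ref{prop:Galois extension char 2}, together with the uniqueness statement at the end of the lemma preceding Proposition~\ref{prop:admissible triple char 2}. First, I would let $\mu$ denote the map from the set of admissible triples $([b]_F,V,W)$ to the set of Galois $\U_4(\F_2)$-extensions $L/F$ produced by Proposition~\ref{prop:admissible triple char 2}: explicitly, $\mu([b]_F,V,W) = E(\wp^{-1}(W))$ where $E=F(\wp^{-1}(V))$. In parallel, I would let $\eta$ denote the map in the other direction supplied by Proposition~\ref{prop:Galois extension char 2}, which uses an isomorphism $\rho\colon \mathrm{Gal}(L/F)\to \U_4(\F_2)$ to read off $([b]_F,V,W)$ and, by the claim proved in that proposition, shows this triple is independent of the choice of $\rho$.

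Next I would verify that $\eta\circ\mu = \mathrm{id}$. Starting from an admissible triple $([b]_F,V,W)$ with $V=\langle[a]_F,[c]_F\rangle$, the extension $L=E(\wp^{-1}(W))$ produced by $\mu$ comes equipped with an explicit isomorphism $\rho\colon \mathrm{Gal}(L/F)\to \U_4(\F_2)$, as described in Proposition~\ref{prop:admissible triple char 2}, sending suitable lifts of $\sigma_a,\sigma_b,\sigma_c$ to $E_{12},E_{23},E_{34}$. Feeding this particular $\rho$ into the recipe of Proposition~\ref{prop:Galois extension char 2}, the fixed field of the subgroup $H$ generated by $\sigma_2,[\sigma_1,\sigma_2],[\sigma_2,\sigma_3],[[\sigma_1,\sigma_2],\sigma_3]$ is precisely $E$, the $\F_2$-subspace reconstructed from the duality with Artin-Schreier is exactly $V=\langle[a]_F,[c]_F\rangle$, and the element dual to $[[\sigma_1,\sigma_2],\sigma_3]$ can be chosen as the original $\delta$ generating $W$, whose trace to $F$ is $b$ modulo $\wp(F)$. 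This recovers $([b]_F,V,W)$.

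For $\mu\circ\eta = \mathrm{id}$, I would begin with a Galois $\U_4(\F_2)$-extension $L/F$; Proposition~\ref{prop:Galois extension char 2} produces an admissible triple $([b]_F,V,W)$ in which the subfield $E=F(\wp^{-1}(V))$ fixed by $H$ satisfies $L = E(\wp^{-1}(W))$ because $\mathrm{Gal}(L/E)$ is recovered from $W$ via the Artin-Schreier pairing used in that proof. Therefore $\mu$ sends $([b]_F,V,W)$ back to $L$. The bulk of the work has already been done in the two propositions, so the only thing left to check is this compatibility, which is routine bookkeeping.

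The main conceptual obstacle — uniqueness of $[b]_F$ given $(V,W)$ — was handled in the lemma preceding Proposition~\ref{prop:admissible triple char 2}, and the independence of $V$ (and hence of $E$) from the choice of isomorphism $\rho$ was handled inside Proposition~\ref{prop:Galois extension char 2}. Thus no new difficulty arises here; the proof of Theorem~\ref{thm:U4 char 2} is the formal assembly of the maps $\mu$ and $\eta$ and the verification that they are mutually inverse, exactly analogous to the proof of Theorem~\ref{thm:U4 char not 2}, with ``Kummer theory'' replaced by ``Artin-Schreier theory'' and ``norm'' replaced by ``trace''.
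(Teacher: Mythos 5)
Your proposal matches the paper's own proof: the paper likewise defines $\mu$ via Proposition~\ref{prop:admissible triple char 2} and $\eta$ via Proposition~\ref{prop:Galois extension char 2}, then checks the two composites are the identity by feeding the explicit isomorphism $\rho$ from the construction back into the recovery procedure and by noting $L=E(\wp^{-1}(W))$, with the uniqueness of $[b]_F$ and the independence from $\rho$ already settled in the preceding lemma and proposition. No gaps; this is essentially the same argument, just written out in slightly more detail than the paper's version.
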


\begin{proof}By Proposition~\ref{prop:admissible triple char 2} we have a map $\mu$ from the set of admissible triples $([b]_F,V,W)$ to the set of Galois $\U_4(\F_2)$-extensions $L/F$.
By Proposition~\ref{prop:Galois extension} we have a map $\eta$ from the set of Galois $\U_4(\F_2)$-extensions $L/F$ to  the set of admissible triples  $([b]_F,V,W)$. We show that $\mu$ and $\eta$ are the inverses of each other.

Let $([b]_F,V,W)$ be an admissible triple. Via the map $\mu$ we obtain a $\U_4(\F_2)$-extension $L/F$. Explicitly, if $V=\langle [a]_F,[c]_F\rangle$ and $E=F(\sqrt{a},\sqrt{c})$, then $L=E(\sqrt{W})$ and there is an isomorphism $\rho\colon {\rm Gal}(L/F)\simeq \U_4(\F_2)$ such that
$\rho^{-1}(E_{12})=\sigma_a$, $\rho^{-1}(E_{23})=\sigma_{b}$, $\rho^{-1}(E_{34})=\sigma_c$. (Here $\sigma_a,\sigma_b,\sigma_c$ are defined as in Proposition~\ref{prop:admissible triple}.)
We apply the construction in Proposition~\ref{prop:Galois extension char 2} with this isomorphism $\rho$. Then we obtain back the admissible triple $([b]_F,V,W)$.

Now let $L/F$ be a $\U_4(\F_2)$-extension. Then via the map $\eta$ we obtain an admissible triple $([b]_F,V,W)$. Since $L=F(\sqrt{V})(W)$, we see that $\mu$ sends the triple $([b]_F,V,W)$ back to the extension $L/F$
\end{proof}

\begin{lem}  Assume that $\dim_{\F_2}(F/\wp(F))=n<\infty$. Then the number $N$ of admissible pairs $([b]_F,V)$ is $\dfrac{4(2^n-1)(2^{n-1}-1)(2^{n-2}-1)}{3}$.
\end{lem}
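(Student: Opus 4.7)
The plan is to count via a linear algebra correspondence, analogous to the proof of Lemma~\ref{lem:1}. Since $F/\wp(F)$ is an $\F_2$-vector space of dimension $n$, an admissible pair $([b]_F,V)$ amounts to a choice of a $2$-dimensional subspace $V\subseteq F/\wp(F)$ together with an element $[b]_F$ such that $\langle V,[b]_F\rangle$ has dimension $3$, i.e., $[b]_F\notin V$. I would reduce this to counting ordered triples of $\F_2$-linearly independent elements.

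Let $N'$ denote the number of ordered triples $([a]_F,[b]_F,[c]_F)$ in $(F/\wp(F))^{3}$ with $\dim_{\F_2}\langle [a]_F,[b]_F,[c]_F\rangle=3$. Since $\dim_{\F_2}(F/\wp(F))=n$, standard linear algebra gives
\[
N'=(2^n-1)(2^n-2)(2^n-4)=8(2^n-1)(2^{n-1}-1)(2^{n-2}-1).
\]
For every admissible pair $([b]_F,V)$, the number of ways to write $V=\langle [a]_F,[c]_F\rangle$ with an ordered basis $([a]_F,[c]_F)$ is the number of ordered bases of a $2$-dimensional $\F_2$-space, namely $(2^2-1)(2^2-2)=6$. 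Thus the map
\[
([a]_F,[b]_F,[c]_F)\longmapsto \bigl([b]_F,\langle [a]_F,[c]_F\rangle\bigr)
\]
is a $6$-to-$1$ surjection from linearly independent triples onto admissible pairs, so $N=N'/6$ and we obtain
\[
N=\frac{8(2^n-1)(2^{n-1}-1)(2^{n-2}-1)}{6}=\frac{4(2^n-1)(2^{n-1}-1)(2^{n-2}-1)}{3}.
\]
There is no real obstacle here; the only subtle point is checking that the correspondence is genuinely $6$-to-$1$, which uses the fact that in an admissible pair we have $[b]_F\notin V$, so shuffling $[a]_F$ and $[c]_F$ within $V$ never collides with the choice of $[b]_F$.
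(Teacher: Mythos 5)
Your count is correct, but you take a different route from the paper's own proof of this lemma. You count ordered linearly independent triples $([a]_F,[b]_F,[c]_F)$ in $F/\wp(F)$, getting $N'=(2^n-1)(2^n-2)(2^n-4)=8(2^n-1)(2^{n-1}-1)(2^{n-2}-1)$, and observe that the map $([a]_F,[b]_F,[c]_F)\mapsto([b]_F,\langle[a]_F,[c]_F\rangle)$ is a well-defined $6$-to-$1$ surjection onto admissible pairs, the fibre over $([b]_F,V)$ being the set of ordered bases of $V$ (of which there are $(2^2-1)(2^2-2)=6$); this is sound, since conversely independence of the triple is exactly the condition $\dim V=2$ and $[b]_F\notin V$. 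The paper instead counts via a nested choice of subspaces using Gaussian binomial coefficients: first a $3$-dimensional subspace $V'$ ($\binom{n}{3}_2$ choices), then a $2$-dimensional $V\subseteq V'$ ($\binom{3}{2}_2$ choices), then $[b]_F\in V'\setminus V$ ($4$ choices), with $V'=\langle V,[b]_F\rangle$ recovering uniqueness of the flag. Your argument has the advantage of running exactly parallel to the paper's own proof of Lemma~\ref{lem:1} in the characteristic-not-$2$ case (where the quaternion conditions force a count of triples followed by division by $6$), so the two counting lemmas become uniform; the paper's flag-and-Gaussian-binomial argument avoids the orbit-counting step and displays the answer as a product of subspace counts, which some may find more transparent. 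Both yield $\frac{4(2^n-1)(2^{n-1}-1)(2^{n-2}-1)}{3}$, and your derivation is complete as written.
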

\begin{proof} Recall that the Gaussian binomial coefficients are defined by
\[
\binom{n}{r}_q=
\begin{cases}
\dfrac{(q^n-1)(q^{n-1}-1)\cdots (q^{n-r+1}-1)}{(q-1)(q^2-1)\cdots (q^r-1)} &\text{ if $r\leq n$}\\
0 &\text{ if $r>n$}.
\end{cases}
\]

Every admissible pairs $([b]_F,V)$ can be obtained as follows. First, we  choose a three dimensional $\F_2$-subspace $V^\prime$ of $F/\wp(F)$. The number of choices of such $V^\prime$ is $\binom{n}{3}_2$. Then we choose a two dimensional $\F_2$-subspace $V$ of $V^\prime$. The number of choices of such $V$ is $\binom{3}{2}_2$. Finally, we choose a vector $[b]_F$ in $V^\prime\setminus V$. The number of choices of such $b$ is $8-4=4$. Therefore we have 
\[
N=\binom{n}{3}_2\times \binom{3}{2}_2\times 4=\dfrac{4(2^n-1)(2^{n-1}-1)(2^{n-2}-1)}{3}.
\qedhere
\]
\end{proof}

\begin{lem}
Assume that $\dim_{\F_2}(F/\wp(F))=n<\infty$. Let $([b]_F,V)$ be a fixed admissible pair. Then $n\geq 3$ and the number of admissible triples $([b]_F,V,W)$ is $2^{3n-6}$. 
\end{lem}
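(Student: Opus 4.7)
The plan is to mimic the counting argument of Lemma~\ref{lem:2}, replacing $(E^\times)^2$ by $\wp(E)$ throughout. First, $n\geq 3$ is immediate: admissibility of $([b]_F,V)$ gives $\dim_{\F_2}\langle V,[b]_F\rangle=3$, forcing $\dim_{\F_2}(F/\wp(F))\geq 3$. Next, by the characteristic $2$ analogue of Lemma~\ref{lem:generator} (the lemma immediately preceding Proposition~\ref{prop:admissible triple char 2}), every free cyclic $\F_2[G]$-submodule $W$ of $E/\wp(E)$ generated by some $[\delta]_E$ with $[\Tr_{E/F}(\delta)]_F=[b]_F$ admits exactly $2^3=8$ such generators. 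Hence the number of admissible $W$ equals
\[\bigl|\{[\delta]_E\in E/\wp(E):[\Tr_{E/F}(\delta)]_F=[b]_F\}\bigr|/8,\]
once we verify that every such $[\delta]_E$ does in fact generate a free module.

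The heart of the argument is to compute $\dim_{\F_2}(E/\wp(E))$. Since $F$ has characteristic $2$, a classical result of Serre asserts $\cd_2(G_F)\leq 1$, so $H^2(G_F,\F_2)=0$ and the Galois group $G_F(2)=\Gal(F^{(2)}/F)$ of the maximal pro-$2$ extension $F^{(2)}/F$ is a free pro-$2$ group of rank $\dim_{\F_2}H^1(G_F,\F_2)=n$. Because $E/F$ is Galois of $2$-power degree we have $E\subseteq F^{(2)}$ and in fact $E^{(2)}=F^{(2)}$, so $G_E(2)=\Gal(F^{(2)}/E)$ is an open index-$4$ subgroup of $G_F(2)$. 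The pro-$p$ Nielsen--Schreier formula then shows that $G_E(2)$ is free pro-$2$ of rank $1+4(n-1)=4n-3$, whence
\[\dim_{\F_2}(E/\wp(E))=\dim_{\F_2}H^1(G_E,\F_2)=4n-3.\]

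With this formula in hand, the trace map $\Tr\colon E/\wp(E)\to F/\wp(F)$ is well-defined (since $\Tr\circ\wp=\wp\circ\Tr$) and surjective (since $\Tr\colon E\to F$ is already surjective), so its fibre over $[b]_F$ has cardinality $2^{4n-3}/2^n=2^{3n-3}$. To confirm freeness of the corresponding $\F_2[G]$-modules, I use inflation--restriction to identify the kernel of the natural map $F/\wp(F)\to E/\wp(E)$ with $V$; admissibility forces $[b]_F\notin V$, so $[b]_E\neq 0$ in $E/\wp(E)$. Consequently $N\cdot[\delta]_E=[\Tr_{E/F}(\delta)]_E=[b]_E\neq 0$ with $N=\sum_{\sigma\in G}\sigma$, so Lemma~\ref{lem:modular group ring} rules out any non-zero annihilator and the module $W$ generated by $[\delta]_E$ is free. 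Dividing by $8$ gives the desired $2^{3n-6}$.

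The main obstacle is the structural formula $\dim_{\F_2}(E/\wp(E))=4n-3$. In contrast to the characteristic not $2$ setting, where Lemma~\ref{lem:2} proceeds via a direct local class field theoretic computation over a $2$-adic base field, here one must invoke Serre's vanishing $\cd_2(G_F)\leq 1$ in characteristic $2$ together with the pro-$2$ Nielsen--Schreier formula to pin down the rank of $G_E(2)$.
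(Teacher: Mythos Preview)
Your proof is correct and follows essentially the same approach as the paper: both compute $\dim_{\F_2}(E/\wp(E))=4n-3$ via the freeness of $G_F(2)$ (from $\cd_2(G_F)\le 1$ in characteristic~$2$) and the Nielsen--Schreier formula for the open index-$4$ subgroup $G_E(2)$, then use surjectivity of the trace to count the fibre over $[b]_F$ as $2^{3n-3}$ and divide by $8$. Your write-up is in fact slightly more careful in one respect: you explicitly verify that every $[\delta]_E$ in the fibre actually generates a \emph{free} $\F_2[G]$-module (using $[b]_E\neq 0$ in $E/\wp(E)$, which follows from identifying $\ker(F/\wp(F)\to E/\wp(E))$ with $V$ and the admissibility condition $[b]_F\notin V$), whereas the paper's proof passes silently over this point when dividing by $8$.
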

\begin{proof} Since there exists at least one admissible pair, namely $([b]_F,V)$, we see that $n\geq 3$. 

It is known that for a field $L$ of characteristic $2$, then the maximal pro-$2$-quotient $G_L(2)$ of the absolute Galois group of $L$ is free of rank $\dim_{\F_2}(L/\wp(L))$.

Let $E=F(\wp^{-1}(V))$. Then $G_E(2)$ is a (closed) subgroup of index $4$ in the free pro-$2$-group $G_F(2)$ of rank $n$. Thus $G_E(2)$ is also free and of rank $4n-3$.

Consider the surjective homomorphism ${\rm Tr}\colon \dfrac{E}{\wp(E)}\to \dfrac{F}{\wp(F)}$. We have
\[
|\ker ({\rm Tr})|= \left|\dfrac{E}{\wp(E)}\right|/ \left|\dfrac{F}{\wp(F)}\right|= 2^{4n-3}/2^n=2^{3n-3}.
\]
Hence
\[
| \{ [\delta]_E \colon [{\rm Tr}_{E/F}(\delta)]_F =[b]_F\}|= |\ker {\rm Tr}|=2^{3n-3}. 
\]
Therefore the number of $W$ such that $([b]_F,V,W)$ is admissible, is $2^{3n-3}/8=2^{3n-6}$.
\end{proof}
\begin{cor} Assume that $\dim_{\F_2}(F/\wp(F))=n<\infty$. Then the number of Galois $\U_4(\F_2)$-extensions $L/F$ is $\dfrac{(2^n-1)(2^{n-1}-1)(2^{n-2}-1)2^{3n-4}}{3}$.
\end{cor}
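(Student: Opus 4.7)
The plan is to simply combine the bijection established in Theorem~\ref{thm:U4 char 2} with the two preceding counting lemmas. Theorem~\ref{thm:U4 char 2} gives a one-to-one correspondence between Galois $\U_4(\F_2)$-extensions $L/F$ and admissible triples $([b]_F,V,W)$, so counting extensions reduces to counting admissible triples.

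To count admissible triples, I would partition them according to the underlying admissible pair $([b]_F,V)$ and apply the two lemmas immediately above. The first lemma says the number of admissible pairs is
\[
\frac{4(2^n-1)(2^{n-1}-1)(2^{n-2}-1)}{3},
\]
and the second says that for each fixed admissible pair $([b]_F,V)$ there are exactly $2^{3n-6}$ admissible triples $([b]_F,V,W)$. Multiplying these two quantities, we obtain
\[
\frac{4(2^n-1)(2^{n-1}-1)(2^{n-2}-1)}{3}\cdot 2^{3n-6}=\frac{(2^n-1)(2^{n-1}-1)(2^{n-2}-1)\,2^{3n-4}}{3},
\]
which is exactly the claimed count.

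There is no substantive obstacle here: the corollary is a pure bookkeeping consequence of previously established results. The only care needed is to verify that the constant $4\cdot 2^{3n-6}=2^{3n-4}$ simplification has been performed correctly and that the hypothesis $n\geq 3$ (implicit in the existence of admissible pairs, and explicit in the second lemma) is noted so that the exponent $3n-6$ is nonnegative. Once these routine checks are made, the proof consists of a single sentence citing Theorem~\ref{thm:U4 char 2} and the two counting lemmas.
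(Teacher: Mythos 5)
Your argument is correct and is exactly the intended one: as with the analogous corollary in the characteristic-not-2 case, the count follows by combining the bijection of Theorem~\ref{thm:U4 char 2} with the two preceding lemmas (number of admissible pairs times $2^{3n-6}$ triples per pair), and the simplification $4\cdot 2^{3n-6}=2^{3n-4}$ is right. Your remark that the pair-count formula already vanishes for $n\le 2$, so the degenerate case causes no trouble, is a sensible extra check but not a substantive difference from the paper.
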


In the next proposition we show in particular that for each natural number $n$ there exist a field satisfying the hypothesis of the above corollary.
\begin{prop} Let $p$ a prime number. Then for each cardinal number $\sC$ there exists a field  $K$ of characteristic $p$ such that $[K: \wp(K)] = \sC$.
\end{prop}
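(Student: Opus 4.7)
The plan is a Zorn's lemma argument carried out inside a sufficiently large algebraically closed field of characteristic $p$.

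First I choose an index set $I$ so that $p^{|I|}=\sC$: take $I=\varnothing$ if $\sC=1$, $|I|=n$ if $\sC=p^n$ is finite, and any $|I|$ with $2^{|I|}=\sC$ when $\sC$ is infinite. Fix an algebraically closed field $\Omega$ of characteristic $p$ containing a family $(X_\alpha)_{\alpha\in I}$ algebraically independent over $\F_p$, and set $K_0=\F_p(X_\alpha:\alpha\in I)\subseteq \Omega$. Let $\mathcal{S}$ be the poset, ordered by inclusion, of subfields $K\subseteq\Omega$ that contain $K_0$ and in which the family $\{[X_\alpha]_K\}_{\alpha\in I}$ is $\F_p$-linearly independent in $K/\wp(K)$. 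A directed union of elements of $\mathcal{S}$ still lies in $\mathcal{S}$, since a nontrivial linear relation would already live in some member of the chain; so Zorn's lemma produces a maximal element $K\in\mathcal{S}$.

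My claim is then $\dim_{\F_p}(K/\wp(K))=|I|$, whence $[K:\wp(K)]=p^{|I|}=\sC$. The lower bound is built into the construction. For the upper bound, the key technical input is: if $K'=K(\theta)$ is any nontrivial Artin--Schreier extension with $\wp(\theta)=a\in K$, then the kernel of the restriction map $K/\wp(K)\to K'/\wp(K')$ equals $\F_p\cdot[a]_K$. To prove this, take $u\in K'$ with $\wp(u)\in K$ and let $\sigma\in\Gal(K'/K)$ be the generator with $\sigma(\theta)=\theta+1$. Then $\wp(\sigma u-u)=\sigma\wp(u)-\wp(u)=0$, so $\sigma u-u=c\in\F_p$; consequently $u-c\theta$ is $\sigma$-fixed, i.e.\ $u=u_0+c\theta$ with $u_0\in K$, and therefore $\wp(u)=\wp(u_0)+ca$, showing $[\wp(u)]_K=c[a]_K$ as desired.

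Granting this, suppose for contradiction that $\dim_{\F_p}(K/\wp(K))>|I|$. Then there exists $a\in K$ with $[a]_K$ not in the span of $\{[X_\alpha]_K\}$, and choosing $\theta\in\Omega$ with $\wp(\theta)=a$ yields a nontrivial Artin--Schreier extension $K'=K(\theta)\subseteq\Omega$. By the kernel computation, no nontrivial $\F_p$-linear combination of $\{[X_\alpha]_K\}$ becomes trivial in $K'/\wp(K')$, so $\{[X_\alpha]_{K'}\}$ remains linearly independent; hence $K'\in\mathcal{S}$, contradicting maximality of $K$. The main obstacle is the kernel computation, but the one-line Galois argument above dispatches it cleanly; everything else is bookkeeping with Zorn and cardinal arithmetic. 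If one prefers to allow cardinals $\sC$ not of the form $p^\kappa$, the same construction realizes $\dim_{\F_p}(K/\wp(K))=\sC$ for an arbitrary cardinal $\sC$, which is the content actually needed to apply the preceding corollary for every natural number $n$.
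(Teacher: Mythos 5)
Your proof is correct in substance, but it takes a genuinely different route from the paper, and a more self-contained one. The paper starts from Waterhouse's theorem that every profinite group is a Galois group, applied to the Pontryagin dual of an $\F_p$-vector space of dimension $\sC$; this produces a field $F$ and a subgroup $A\supseteq\wp(F)$ with $\dim_{\F_p}A/\wp(F)=\sC$, and a Zorn's lemma argument inside the maximal $p$-extension $F(p)$ then finds a maximal $K$ for which $A/\wp(F)\to K/\wp(K)$ stays injective, maximality forcing surjectivity. You instead start from the purely transcendental field $\F_p(X_\alpha:\alpha\in I)$ and run the dual Zorn argument inside an algebraic closure, keeping the classes $[X_\alpha]$ independent and letting maximality force them to span. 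The engine is the same in both arguments: in a degree-$p$ Artin--Schreier extension $K(\theta)/K$ with $\wp(\theta)=a$, the kernel of $K/\wp(K)\to K(\theta)/\wp(K(\theta))$ is $\F_p\cdot[a]_K$; the paper uses this implicitly in its Claim 2, while your Galois-theoretic two-line proof makes it explicit --- that, plus avoiding Waterhouse and duality, is what your route buys. Two points to tighten. First, Zorn's lemma needs $\mathcal{S}\neq\emptyset$, i.e.\ you must check that the $X_\alpha$ are $\F_p$-independent modulo $\wp(\F_p(X_\alpha:\alpha\in I))$; this is true (a relation $\sum c_\alpha X_\alpha=\wp(f)$ forces $f$ to be a polynomial, since for $f=g/h$ in lowest terms $h^p$ would divide $g^p-gh^{p-1}$, and then a degree count leaves only constant $f$), and it is genuinely needed, because a relation in $K_0$ persists in every larger field, so without it $\mathcal{S}$ could a priori be empty. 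Second, your cardinal bookkeeping in the infinite case is off: $K/\wp(K)$ is a direct sum of $|I|$ copies of $\F_p$, so its cardinality is $|I|$ when $|I|$ is infinite, not $2^{|I|}$ (and not every infinite cardinal is of the form $2^{\kappa}$ anyway); the correct move is the one in your closing sentence --- take $|I|=\sC$ and read the proposition, as the paper's own proof in effect does, as asserting $\dim_{\F_p}(K/\wp(K))=\sC$.
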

\begin{proof}
Consider any $\F_p$-vector space $V$ such that $\dim_{\F_p}(V)$ is $\sC$. Let $V^*={\rm Hom}(G,\Q/\Z)$ be the Pontrjagin dual of $V$.
Then $V^*$ is a profinite (abelian) group. 
By \cite[Theorem 2]{Wa1} there exists a field $F$ of characteristic $p$ such that $F$ admits a Galois extension $L/F$ with ${\rm Gal}(L/F) = V^*$.
By  Artin-Schreier theory we conclude that ${\rm Hom}_ {cont}(V^*,\F_p) = H^1(V^*,\F_p)$, which is isomorphic canonically with $V$ via Pontrjagin duality, is isomorphic to $A/(\wp(F)$ ,where $A$ is some subgroup of $F$ containing $\wp(F)$. Hence the $\F_2$-dimension of $A/\wp(F)$ is $\sC$.

Now consider the maximal Galois extension $K/F$ in the maximal $p$-extension $F(p)$ of $F$ such that:
(*) the natural map $A/\wp(F) \to K/\wp(K)$ is an injection.

{\bf Claim 1:} Such an extension $K/F$ exists. 

{\it Proof}: .
Let $\sS$ be the set of all fields extension $K$ over $F$ in $F(p)$ satisfying the condition (*). Then $\sS$ is not empty since it contains at least  $F$. This set is partially ordered by set inclusion. We shall apply apply Zorn's lemma. We take a non-empty totally ordered subset $\sT$ of $\sS$. Let $K$ be the union of all fields $K_i$ in $\sT$. Clearly $K/F$ is a field extension and $K\subseteq F(p)$. Consider the natural map $A\wp(K)\to K/\wp(K)$. Suppose that this map is not injective. Then $A\cap \wp(K)$ is strictly larger than $\wp(F)$. However $A\cap \wp(K)=\bigcup_{K_i\in \sT}(A\cap \wp(K_i))$. Thus there exists a field $K_i\in \sT$ such that 
$A\cap \wp(K_i)$ is strictly larger than $\wp(F)$. This implies that that the natural map $A/\wp(F)\to K_i/\wp(K_i)$ is not injective, which contradicts  the condition that $K_i$ satisfies (*).
Therefore the map $A\wp(K)\to K/\wp(K)$ is injective and $K$ is in $\sT$. Clearly $K$ is greater than every element in $\sT$. 
The Claim then follows from Zorn's lemma.

{\bf Claim 2:} The above injection 
$A/\wp(F) \to K/\wp(K)$
is an isomorphism.

{\it Proof}: If the injection is not an isomorphism, then there exists an element $u$ in $K$ such that $u\not \equiv a\bmod \wp(K)$ for every $a\in A$. We have $A\cap (iu+\wp(K))=\emptyset$ for every $i=1,2\ldots,p-1$. Let $T= K(\theta_u)$. Then $T$ is strictly larger than $K$ and $T\subseteq F(p)$.
We have
\[
\begin{aligned}
A\cap \wp(T)&= A\cap (K\cap \wp(T))
 = A \cap [\bigcup_{i=0}^{p-1}(iu+\wp(K)]
 = A\cap \wp(K)
 =\wp(F).
\end{aligned}
\]

We consider the natural map 
$
\eta\colon A/\wp(F) \to T/\wp(T). 
$
Then $\ker(\eta)= \dfrac{A\cap\wp(T)}{\wp(F)}=0$. Thus $\eta$ is an injective. This contradicts  the maximality of $K$.
\end{proof}

\section{Example: The case $F=\Q_2$}

In this section we illustrate our results by considering the case that the base field is the field $\Q_2$ of 2-adic numbers. Here we provide a list of all unipotent Galois extensions $L/\Q_2$ with Galois groups isomorphic to $\U_n(\F_2)$ for $n\geq 2$. This completes the work of Naito (\cite{Na}) who listed all dihedral extensions of order 8 over $\Q_2$. 
The actual checking that our list is the complete list of all $\U_n(\F_2)$- Galois extensions of $\Q_2$ still requires  some work.
However because it is a straightforward application of the theory of Galois unipotent extensions in our paper, we omit basic numerical verifications.
The field $\Q_2$ has rather special role in Galois theory. Historically it attracted attention in work of Demushkin, Labute, Serre, Shafarevich and Weil. (See for example \cite{La},\cite{Sha},\cite{Se},\cite{We}.)

Assume that $F$ is $\Q_2$. Then we know that $[-1], [2], [5]$ is a basis for the $\F_2$-vector space $\Q_2^\times/(\Q_2^\times)^2$. (Here for simplicity, we denote $[a]$ for the class of $a$ in $\Q_2^\times/\Q_2^\times$.) The maximal abelian 2-elementary extension $K$ of $\Q_2$ is $\Q_2(\sqrt{-1},\sqrt{2},\sqrt{5})$.
\begin{prop} There are no Galois $\U_n(\F_2)$-extensions over $\Q_2$ for every $n\geq 5$. 
\end{prop}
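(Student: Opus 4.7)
The plan is to exploit the mismatch between the abelianization rank of $\U_n(\F_2)$ and the $\F_2$-dimension of $\Q_2^\times/(\Q_2^\times)^2$, which by the introductory remarks has basis $[-1],[2],[5]$ and hence equals $3$.

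First I would compute the abelianization of $\U_n(\F_2)$. The group $\U_n(\F_2)$ is generated by the elementary matrices $E_{i,i+1}$ for $i=1,\ldots,n-1$, since all other $E_{ij}$ ($j>i+1$) arise as iterated commutators via $[E_{ik},E_{k,j}]=E_{ij}$ for $i<k<j$. In the abelianization $\U_n(\F_2)^{\rm ab}$, every such commutator dies, so $\U_n(\F_2)^{\rm ab}$ is generated by the images of $E_{i,i+1}$ for $i=1,\ldots,n-1$. Each of these has order $2$, and they are independent modulo the commutator subgroup (for instance, because $\U_n(\F_2)/[\U_n(\F_2),\U_n(\F_2)]$ surjects onto the quotient by the derived series one step further, which is visibly $(\Z/2)^{n-1}$). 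Thus $\U_n(\F_2)^{\rm ab}\simeq (\Z/2\Z)^{n-1}$.

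Second, I would assume for contradiction that $L/\Q_2$ is a Galois extension with ${\rm Gal}(L/\Q_2)\simeq \U_n(\F_2)$ for some $n\ge 5$. Composing the quotient map $G_{\Q_2}\twoheadrightarrow \U_n(\F_2)$ with the abelianization $\U_n(\F_2)\twoheadrightarrow \U_n(\F_2)^{\rm ab}\simeq (\Z/2\Z)^{n-1}$ yields a continuous surjection $G_{\Q_2}\twoheadrightarrow (\Z/2\Z)^{n-1}$. Such a surjection factors through the maximal $2$-elementary abelian quotient of $G_{\Q_2}$, whose Galois group by Kummer theory is dual to $\Q_2^\times/(\Q_2^\times)^2\simeq (\Z/2\Z)^3$. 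Therefore $(\Z/2\Z)^{n-1}$ must be a quotient of $(\Z/2\Z)^3$, which forces $n-1\le 3$. This contradicts $n\ge 5$, so no such extension exists.

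The argument is essentially rank-counting, and the only non-trivial input is the identification of the abelianization of $\U_n(\F_2)$; this is routine but I would make sure to present it cleanly. There is no real obstacle, since the bound $\dim_{\F_2}\Q_2^\times/(\Q_2^\times)^2=3$ is exactly what truncates the induction, and the same argument would apply over any field $F$ with $\dim_{\F_2}F^\times/(F^\times)^2\le n-2$ (respectively $\dim_{\F_2}F/\wp(F)\le n-2$ in characteristic $2$).
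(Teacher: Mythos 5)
Your proof is correct and is essentially the paper's argument: the paper uses the projection of $\U_n(\F_2)$ onto its near-diagonal entries, which is exactly the abelianization map you construct, to obtain a surjection of $G_{\Q_2}$ onto $(\Z/2\Z)^{n-1}$, and then compares with the maximal $2$-elementary abelian extension $\Q_2(\sqrt{-1},\sqrt{2},\sqrt{5})$ of degree $8$, i.e.\ with $\dim_{\F_2}\Q_2^\times/(\Q_2^\times)^2=3$. The only difference is presentational (you argue on quotients via Kummer duality, the paper on fixed fields), so no changes are needed.
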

\begin{proof}   Suppose that there is a Galois extension $L/\Q_2$ with Galois group isomorphic to $\U_n(\F_2)$ for some $n\geq 5$. Then we have a surjective homorphism $\rho\colon {\rm Gal}_{\Q_2}\to \U_n(\F_p)$. The homomorphism 
\[
\varphi=(\rho_{12},\ldots,\rho_{n-1,n})\colon G\to \F_p\times \cdots \times \F_p
\]
induced by the projection of $\U_n(\F_p)$ on its near-by diagonal is also surjective. Let $N$ be the fixed field under the subgroup $\ker(\varphi)$. Then $K/\Q_2$ is an abelian 2-extension with ${\rm Gal}(K/\Q_2)\simeq (\Z/2\Z)^{n-1}$. This implies that $N$ is contained in the maximal abelian 2-extension $K$ of $\Q_2$. But this contradicts to  the fact that $[N:\Q_2]=2^{n-1}>8=[K:\Q_2]$. 
\end{proof}
\subsection{A list of $\U_2(\F_2)$-extensions of $\Q_2$} Here is a list of Galois $\U_2(\F_2)=\Z/2\Z$ extensions of $\Q_2$: $\Q_2(\sqrt{-1})$, $\Q_2(\sqrt{2})$, $\Q_2(\sqrt{5})$, $\Q_2(\sqrt{-2})$, $\Q_2(\sqrt{-5})$, $\Q_2(\sqrt{10})$, $\Q_2(\sqrt{-10})$.

\subsection{A list of $\U_3(\F_2)$-extensions of $\Q_2$} Here is a list of Galois $\U_3(\F_2)=D_8$ extensions of $\Q_2$. Here a pair $\{[a],[b]\}$ in the first column is an unordered admissible pair which we refer to Theorem~\ref{thm:D4 char not 2}. Here we have 9 unordered admissible pairs $\{[a], [b]\}$ and each gives rise to two further admissible pairs $(\{[a], [b]\},W)$.
\begin{itemize}
\item $\{[-1], [2]\}$: $\mathbb{Q}_2(\sqrt{1+\sqrt{2}},\sqrt{-1}), \mathbb{Q}_2(\sqrt{3+\sqrt{2}},\sqrt{-1})$;
\item $\{[-1], [5]\}$:  $\mathbb{Q}_2(\sqrt{2+\sqrt{5}},\sqrt{-1})$, $\mathbb{Q}_2(\sqrt{2(2+\sqrt{5})},\sqrt{-1})$;
\item $\{[-1], [10]\}$: $\mathbb{Q}_2(\sqrt{1+\sqrt{10}},\sqrt{-1})$, $\mathbb{Q}_2(\sqrt{3+\sqrt{10}},\sqrt{-1})$;
\item $\{[-2], [2]\}$: $\mathbb{Q}_2(\sqrt{\sqrt{2}},\sqrt{-2})$, $\mathbb{Q}_2(\sqrt{3\sqrt{2}},\sqrt{-2})$; 
\item $\{[-5], [5]\}$: $\mathbb{Q}_2(\sqrt{4+\sqrt{5}},\sqrt{-5})$, $\mathbb{Q}_2(\sqrt{2(4+\sqrt{5})},\sqrt{-5})$; 
\item $\{[-2], [-10]\}$: $\mathbb{Q}_2(\sqrt{-2+\sqrt{-2}},\sqrt{-10})$, $\mathbb{Q}_2(\sqrt{-6+\sqrt{-2}},\sqrt{-10})$;
\item $\{[-10], [10]\}$: $\mathbb{Q}_2(\sqrt{\sqrt{10}},\sqrt{-10})$, $\mathbb{Q}_2(\sqrt{3\sqrt{10}},\sqrt{-10})$;
\item $\{[-5], [-10]\}$: $\mathbb{Q}_2(\sqrt{1+\sqrt{-10}},\sqrt{-5})$, $\mathbb{Q}_2(\sqrt{5+\sqrt{-10}},\sqrt{-5})$;
\item $\{[-2], [-5]\}$: $\mathbb{Q}_2(\sqrt{1+\sqrt{-2}},\sqrt{-5})$, $\mathbb{Q}_2(\sqrt{5+\sqrt{-2}},\sqrt{-5})$. 
\end{itemize}

\subsection{A list of $\U_4(\F_2)$-extensions of $\Q_2$} 
The number of admissible pairs $([b],V)$ is $4$. We also have $V$ is uniquely determined  by $[b]$, and $([b],V)$ is admissible if and only if $[b]$ is in $\{[-1],[-2],[-5],[-10]\}$.
Each admissible pair $([b],V)$ can be extended to four admissible triples $([b], V, W)$. Recall that $K$ is the maximal abelian 2-elementary extension $\Q_2(\sqrt{-1},\sqrt{2},\sqrt{5})$ of $\Q_2$.
Here is a list of Galois $\U_4(\F_2)$-extensions of $\Q_2$:
\begin{itemize}
\item $[b]=[-1]$: 
\[
\begin{aligned}
L_1 &= K(\sqrt{1+\sqrt{2}},\sqrt{3+\sqrt{10}},\sqrt{4+\sqrt{2}+\sqrt{10}}),\\
L_2 &= K(\sqrt{1+\sqrt{2}},\sqrt{\dfrac{1+\sqrt{10}}{3}}, \sqrt{\dfrac{4+3\sqrt{2}+\sqrt{10}}{3}}),\\
L_3 &= K(\sqrt{\dfrac{3+\sqrt{2}}{\sqrt{-7}}},\sqrt{3+\sqrt{10}},\sqrt{\dfrac{3+\sqrt{2}}{\sqrt{-7}}+3+\sqrt{10}}),\\
L_4 &= K(\sqrt{\dfrac{3+\sqrt{2}}{\sqrt{-7}}},\sqrt{\dfrac{1+\sqrt{10}}{3}}, \sqrt{\dfrac{3+\sqrt{2}}{\sqrt{-7}}+\dfrac{1+\sqrt{10}}{3}}),
\end{aligned}
\]
where $\sqrt{-7}=1+2^2+2^4+2^5+\cdots\in \Q_2$.
\item $[b]=[-2]$: 
\[
\begin{aligned}
L_5 &= K(\sqrt{\sqrt{2}},\sqrt{\sqrt{\frac{-2}{14}}(2+\sqrt{-10})},\sqrt{\sqrt{2}+ \sqrt{\frac{-2}{14}}(2+\sqrt{-10})}),\\
L_6 &= K(\sqrt{\sqrt{2}},\sqrt{\sqrt{\frac{-2}{94}}(2+3\sqrt{-10})},\sqrt{\sqrt{2}+\sqrt{\frac{-2}{94}}(2+3\sqrt{-10})}),\\
L_7 &= K(\sqrt{\sqrt{\frac{-2}{14}}(4+\sqrt{2})},\sqrt{\sqrt{\frac{-2}{14}}(2+\sqrt{-10})},\sqrt{\sqrt{\frac{-2}{14}}(4+\sqrt{2})+\sqrt{\frac{-2}{14}}(2+\sqrt{-10})}),\\
L_8 &= K(\sqrt{\sqrt{\frac{-2}{14}}(4+\sqrt{2})},\sqrt{\sqrt{\frac{-2}{94}}(2+3\sqrt{-10})},\sqrt{\sqrt{\frac{-2}{14}}(4+\sqrt{2}) +\sqrt{\frac{-2}{94}}(2+3\sqrt{-10})}),\\
\end{aligned}
\]
where 
\[
\begin{aligned}
\sqrt{-2/14} &=1+2^2+2^3+2^4+2^7+\cdots\in \Q_2, \\
\sqrt{-2/94} &=1+2^3+2^4+2^5+2^6+\cdots\in \Q_2.
\end{aligned}
\]
\item $[b]=[-5]$:
\[
\begin{aligned}
L_9 &= K(\sqrt{\sqrt{\frac{-5}{3}}(1+\sqrt{-2})},\sqrt{\sqrt{\frac{-5}{11}}(-1+\sqrt{-10})},\sqrt{\sqrt{\frac{-5}{3}}(1+\sqrt{-2})+\sqrt{\frac{-5}{11}}(-1+\sqrt{-10})}),\\
L_{10} &= K(\sqrt{\sqrt{\frac{-5}{3}}(1+\sqrt{-2})},\sqrt{\sqrt{\frac{-5}{35}}(5+\sqrt{-10})},\sqrt{\sqrt{\frac{-5}{3}}(1+\sqrt{-2})+\sqrt{\frac{-5}{35}}(5+\sqrt{-10}) }),\\
L_{11} &= K(\sqrt{\sqrt{\frac{-5}{3}}(-1+\sqrt{-2})}, \sqrt{\sqrt{\frac{-5}{11}}(-1+\sqrt{-10})}, \sqrt{\sqrt{\frac{-5}{3}}(-1+\sqrt{-2})+\sqrt{\frac{-5}{11}}(-1+\sqrt{-10})}),\\
L_{12} &= K(\sqrt{\sqrt{\frac{-5}{3}}(-1+\sqrt{-2})}), \sqrt{\sqrt{\frac{-5}{35}}(5+\sqrt{-10})},\sqrt{\sqrt{\frac{-5}{3}}(-1+\sqrt{-2})+\sqrt{\frac{-5}{35}}(5+\sqrt{-10})}),
\end{aligned}
\]
where 
\[
\begin{aligned}
\sqrt{\frac{-5}{3}} &=1+2+2^4+2^5+26+2^7+2^9+\cdots\in \Q_2,\\
\sqrt{\frac{-5}{11}} &= 1+2^3+2^6+2^7+2^{10}+\cdots\in \Q_2,\\
\sqrt{\frac{-5}{35}} &= 1+2+2^5+2^6+2^{9}+\cdots\in \Q_2.
\end{aligned}
\]

\item $[b]=[-10]$:
\[
\begin{aligned}
L_{13}  &= K(\sqrt{\sqrt{\frac{-10}{38}}(6+\sqrt{-2})},\sqrt{\sqrt{\frac{-10}{6}}(-1+\sqrt{-5})},\sqrt{\sqrt{\frac{-10}{38}}(6+\sqrt{-2})+ \sqrt{\frac{-10}{6}}(-1+\sqrt{-5})}),\\
L_{14}  &= K(\sqrt{\sqrt{\frac{-10}{38}}(6+\sqrt{-2})},\sqrt{\sqrt{\frac{-10}{70}}(5+3\sqrt{-5})},\sqrt{\sqrt{\frac{-10}{38}}(6+\sqrt{-2})+ \sqrt{\frac{-10}{70}}(5+3\sqrt{-5})}),\\
L_{15}  &= K(\sqrt{\sqrt{\frac{-10}{6}}(2+\sqrt{-2})},\sqrt{\sqrt{\frac{-10}{6}}(-1+\sqrt{-5})},\sqrt{\sqrt{\frac{-10}{6}}(2+\sqrt{-2})+ \sqrt{\frac{-10}{6}}(-1+\sqrt{-5})}),\\
L_{16}  &= K(\sqrt{\sqrt{\frac{-10}{6}}(2+\sqrt{-2})},\sqrt{\sqrt{\frac{-10}{70}}(5+3\sqrt{-5})},\sqrt{\sqrt{\frac{-10}{6}}(2+\sqrt{-2})+ \sqrt{\frac{-10}{70}}(5+3\sqrt{-5})}),
\end{aligned}
\]
where
\[
\begin{aligned}
\sqrt{\frac{-10}{38}}&=1+2 + 2^3+ 2^7 +2^8 +2^9+\cdots \in \Q_2,\\
\sqrt{\frac{-10}{6}}&=1+2 + 2^4+ 2^5 +2^6 +2^7+\cdots \in \Q_2,\\
\sqrt{\frac{-10}{70}}&=1+2 + 2^5+ 2^6 +2^9 +2^12+\cdots \in \Q_2.
\end{aligned}
\]
\end{itemize}


\begin{thebibliography}{999999999}
\bibitem[A]{A} F. Amano, {\it On a certain nilpotent extension over Q of degree 64 and the 4-th multiple residue symbol}, Tohoku Math. J. (2) 66 (2014), no. 4, 501-522.
\bibitem[Dwy]{Dwy} W. G. Dwyer, {\it Homology, Massey products and maps between groups}, J. Pure Appl. Algebra 6 (1975), no. 2, 177-190.
\bibitem[Ef]{Ef} I. Efrat, {\it The Zassenhaus filtration, Massey products, and representations of profinite groups}, Adv. Math. 263 (2014), 389-411.

\bibitem[EMa]{EMa} I. Efrat and E. Matzri, {\it Triple Massey products and absolute Galois groups}, to appear in J. Eur. Math. Soc., arXiv:1412.7265.
\bibitem[EM]{EM} I. Efrat and J. Min\'a\v{c},  {\it On the descending central sequence of absolute Galois groups}, Amer. J. Math. 133 (2011), no. 6, 1503-1532.
\bibitem[Ja]{Ja} M. Jarden, {\it Algebraic patching}, Springer Monographs in Mathematics, Springer, Heidelberg, 2011.

\bibitem[JLY]{JLY} C. U. Jensen, A. Ledet, N. Yui, {\it Generic polynomials. Constructive aspects of the inverse Galois problem}, Mathematical Sciences Research Institute Publications, 45. Cambridge University Press, Cambridge, 2002. 
\bibitem[GLMS]{GLMS} W. Gao, D. Leep, J. Min\'a\v{c} and T. L. Smith, {\it Galois groups over nonrigid fields}, Proceedings of the International Conference on Valuation Theory and its Applications, Vol. II (Saskatoon, SK, 1999), 61-77, Fields Inst. Commun., 33, Amer. Math. Soc., Providence, RI, 2003.

\bibitem[HW]{HW} M. Hopkins and K. Wickelgren, {\it Splitting varieties for triple Massey products},  J. Pure Appl. Algebra 219 (2015), 1304-1319.
\bibitem[La]{La} J. Labute, {\it Classification of Demushkin groups}, Canad. J. Math. 19 (1966), 106-132.
\bibitem[Lam]{Lam} T. Y. Lam, {\it Introduction to quadratic forms over fields}, Graduate Studies in Mathematics, 67. American Mathematical Society, Providence, RI, 2005.
\bibitem[Le]{Le} A. Ledet, {\it Brauer type embedding problems}, Fields Institute Monographs, 21. American Mathematical Society, Providence, RI, 2005.
\bibitem[McL]{McL} C. McLeman, {\it $p$-tower groups over quadratic imaginary number fields}, Ann. Sci. Math. Qu\'ebec 32 (2008), no. 2, 199-209.
\bibitem[Ma]{Ma} R. Massy, {\it Construction de $p$-extensions galoisiennes d'un corps de caract\'eristique diff\'erente de $p$}, J. Algebra 109 (1987), no. 2, 508-535.
\bibitem[MNg]{MNg} R. Massy and T. Nguyen-Quang-Do, {\it Plongement d'une extension de degr\'e $p^2$ dans une surextension non ab\'elienne de degr\'e $p^3$: \'etude locale-globale,} J. Reine Angew. Math. 291 (1977), 149-161.
\bibitem[MSp]{MSp} J. Min\'a\v{c} and M. Spira, {\it Witt rings and Galois groups}, Ann. of Math. (2) 144 (1996), no. 1, 35-60.
\bibitem[MT1]{MT1} J. Min\'a\v{c} and N. D. T{\^a}n, {\it Triple Massey products and Galois theory}, to appear in J. Eur. Math. Soc., arXiv:1307.6624.
\bibitem[MT2]{MT2} J. Min\'a\v{c} and N. D. T{\^a}n, {\it The Kernel Unipotent Conjecture and Massey products on an odd rigid field} (with an appendix by I. Efrat, J. Min\'a\v{c} and N. D. T{\^a}n), Adv. Math.  273 (2015), 242-270.
\bibitem[MT3]{MT3} J. Min\'a\v{c} and N. D. T{\^a}n, {\it Counting Galois ${\mathbb U}_4({\mathbb F}_p)$-extensions using Massey products}, preprint (2014), arXiv:1408.2586.
\bibitem[MT4]{MT4} J. Min\'a\v{c} and N. D. T{\^a}n, {\it Triple Massey products vanish over all fields}, to appear in J. London Math. Soc.,  	arXiv:1412.7611.
\bibitem[MT5]{MT5} J. Min\'a\v{c} and N. D. T{\^a}n, {\it Construction of unipotent Galois extensions and Massey product}, to appear in Adv. Math., arXiv:1501.01346.

\bibitem[MZ]{MZ} I. M. Michailov and N. P. Ziapkov, {\it On realizability of p-groups as Galois groups}, Serdica Math. J. 37 (2011), 173-210.
\bibitem[Na]{Na} H. Naito, {\it Dihedral extensions of degree 8 over the rational $p$-adic fields}, Proc. Japan Acad. Ser. A Math. Sci. 71 (1995), no. 1, 17-18. 
\bibitem[Neu]{Neu} J. Neukirch, {\it Algebraic number theory}, translated from the 1992 German original and with a note by Norbert Schappacher, with a foreword by G. Harder. Grundlehren der Mathematischen Wissenschaften [Fundamental Principles of Mathematical Sciences], 322. Springer-Verlag, Berlin, 1999.
\bibitem[Sa]{Sa} D. J. Saltman,  {\it Generic Galois extensions and problems in field theory}, Adv. in Math. 43 (1982), no. 3, 250-283.
\bibitem[Sha]{Sha} I. R. Shafarevich, {\it Abelian and nonabelian mathematics}, Translated from the Russian by S. Zdravkovska, Math. Intelligencer 13 (1991), no. 1, 67-75.
\bibitem[Se1]{Se1} J.-P. Serre, {\it Structures de certain pro-$p$-groups}. S{\'e}m. Bourbaki,  expos\'e 252, (1962/63).
\bibitem[Se2]{Se} J.-P. Serre, {\it Local Fields.} Translated from the French by Marvin Jay Greenberg. Graduate Texts in Mathematics, 67. Springer-Verlag, New York-Berlin, 1979. 
\bibitem[Vi]{Vi}  F. R. Villegas, {\it Relations between quadratic forms and certain Galois extensions}, a manuscript, Ohio State University, 1988,
http://www.math.utexas.edu/users/villegas/osu.pdf.

\bibitem[We]{We} A. Weil, {\it Exercices dyadiques}, Invent. Math. 27 (1974), 1-22. 

\bibitem[Wa1]{Wa1} W. C. Waterhouse, {\it  Profinite groups are Galois groups}, Proc. Amer. Math. Soc. 42 (1973), 639-640. 
\bibitem[Wa2]{Wa2} W. C. Waterhouse, {\it The normal closures of certain Kummer extensions}, Canad. Math. Bull. Vol. 37(1), 1994 133-139
\bibitem[Ya]{Ya} M. Yamagishi, {\it On the number of Galois p-extensions of a local field}, Proc. Amer. Math. Soc. 123 (1995), no. 8, 2373-2380. 
\end{thebibliography}
\end{document}